\documentclass[12pt]{amsart}
\usepackage{amsmath,amssymb,amsthm}
\usepackage{esint}
\usepackage[margin=1in]{geometry}
\usepackage{microtype}
\usepackage[hidelinks]{hyperref}
\hypersetup{pdftitle={Large-volume fractional isoperimetry on products of closed manifolds and Euclidean spaces},pdfauthor={Juneyoung Seo}}
\numberwithin{equation}{section}
\theoremstyle{plain}
\newtheorem{theorem}{Theorem}[section]
\newtheorem{lemma}[theorem]{Lemma}
\newtheorem{proposition}[theorem]{Proposition}
\newtheorem{corollary}[theorem]{Corollary}
\newcommand{\R}{\mathbb R}
\newcommand{\dd}{\,d}

\title[Large-volume fractional isoperimetry]{Large-volume fractional isoperimetry on products of closed manifolds and Euclidean spaces}
\author[J. Seo]{Juneyoung Seo}
\address{Innovation Center for Atomic Science, DAU G-LAMP Project Group, Dong-A University, Busan 49315, Republic of Korea}
\email{juneys.dau@gmail.com}
\subjclass[2020]{Primary 49Q20; Secondary 49Q05, 35R11, 58J35}
\keywords{Fractional perimeter; spectral gap; global uniqueness; quantitative stability; heat kernel; product manifolds.}
\thanks{This research was supported by Global - Learning \& Academic research institution for Master's$\cdot$PhD students, and Postdocs (LAMP) Program of the National Research Foundation of Korea (NRF) grant funded by the Ministry of Education (RS-2025-25440216).}

\theoremstyle{definition}
\newtheorem{definition}[theorem]{Definition}
\newcommand{\cP}{\mathcal{P}}

\theoremstyle{plain}
\newtheorem{citedtheorem}[theorem]{Theorem}
\begin{document}
\begin{abstract}
Let $M$ be a closed connected Riemannian manifold and let $\mathcal P_s$ be
the fractional perimeter on $M\times\mathbb R^k$ defined through its heat
semigroup, where $k\ge1$ and $0<s<1$. We prove that the straight cylinder
$M\times B_R$, with $B_R\subset\mathbb R^k$ a ball of radius $R$, uniquely
minimizes $\mathcal P_s$ among all measurable sets of the same volume,
up to translations in $\mathbb R^k$ and null sets, whenever $R\sqrt{\lambda_1(M)}$
is at least an explicit threshold depending only on $k$ and $s$. Here $\lambda_1(M)$ is the first positive eigenvalue of the
nonnegative Laplace--Beltrami operator $-\Delta_M$ on $M$. Under a larger threshold, the perimeter
deficit controls both the deviation of the slice volumes from their mean and
the square of the asymmetry relative to straight cylinders. For flat tori this gives large-volume uniqueness of straight cylinders for the periodic fractional perimeter, including singly periodic sets in every ambient dimension.
The proof combines spectral reduction on $M$ with rearrangement of the
Euclidean slices and a one-variable comparison between an interaction gain and a concavity loss.
\end{abstract}
\maketitle
\section{Introduction}
Let $(M,g)$ be a closed connected Riemannian
manifold of dimension $m\ge1$, let $k\ge1$, and equip
$N=M\times\mathbb R^k$ with its product metric. For the classical
perimeter, the isoperimetric regions of sufficiently large volume in
$N$ are the \emph{straight cylinders} $M\times B_R(z_0)$
\cite{Gonzalo,RV}, where $B_R(z_0)\subset\mathbb R^k$ is the ball of radius
$R$ centered at $z_0$. We establish the corresponding uniqueness and
quantitative stability for the heat-kernel fractional perimeter with
$0<s<1$, under explicit lower bounds on $R\sqrt{\lambda_1(M)}$ that depend
only on $k$ and $s$; here $\lambda_1(M)$ is the first positive eigenvalue of $-\Delta_M$.
When $M$ is a flat torus, this resolves the large-volume uniqueness
question for the periodic fractional perimeter in every ambient dimension.

\subsection*{Setting and main results}
Let $\mu$ be the Riemannian volume measure of $(M,g)$, write $|M|=\mu(M)$,
and equip $N$ with the product
of $\mu$ and Lebesgue measure. We write $|\cdot|$ for the measure of a set in its
ambient space ($M$, $\mathbb R^d$ or $N$; surface measure for spheres), points of
$N$ as $X=(x,z)$ and $Y=(y,w)$, and $dX=d\mu(x)\,dz$. Put $n=m+k$. Let $\Delta_M=\operatorname{div}_g\nabla_g$ be the Laplace--Beltrami
operator of $M$ and let $h_t(x,y)$, $t>0$, be the heat
kernel of $M$, that is, the integral kernel of $e^{t\Delta_M}$ with respect to
$\mu$. The heat kernel of $N$ is then
\[
h_t^N(X,Y)=h_t(x,y)\,G_t(z-w),\qquad G_t(z)=(4\pi t)^{-k/2}e^{-|z|^2/(4t)}.
\]

For measurable $E\subset N$ of finite measure define
\begin{equation}\label{eq:def}
\mathcal P_s(E)=\kappa_{n,s}\int_0^\infty\int_E
\int_{N\setminus E}h_t^N(X,Y)\,dY\,dX\,\frac{dt}{t^{1+s/2}},
\end{equation}
where $\kappa_{n,s}=\pi^{n/2}/(2^s\Gamma((n+s)/2))$ for every integer $n\ge1$.
This heat-kernel definition follows the construction of
\cite[Definition~3.1 and Section~3.2]{CFS2} on the complete product $N$.
The normalization agrees with the Euclidean kernel
$|X-Y|^{-n-s}$ \cite{CRS}. On a general product the kernel is determined
by the heat semigroup rather than by a power of geodesic distance.
The perimeter is allowed to be infinite.

Write $E_x=\{z:(x,z)\in E\}$ for the slice at $x$, and let $\omega_k$ be the volume of the unit ball of $\mathbb R^k$. For $0<|E|<\infty$ put $A=|E|/|M|$ and $R=(A/\omega_k)^{1/k}$, so $|E|=|M|\omega_kR^k$ and the cylinder has the same volume as $E$.

The thresholds $T_{k,s}(1)$ and $T_{k,s}(2)$, depending only on $k,s$, are defined in Definition~\ref{def:rho}.
\begin{theorem}\label{thm:A}
Let $(M,g)$ be a closed connected Riemannian manifold, let $k\ge1$
and $0<s<1$, and let $E\subset M\times\mathbb R^k$ be measurable with
$|E|=|M|A$, $0<A<\infty$. Put $R=(A/\omega_k)^{1/k}$.
If $R\sqrt{\lambda_1(M)}\ge T_{k,s}(1)$, then
\[
\mathcal P_s(E)\ge\mathcal P_s(M\times B_R).
\]
Equality holds if and only if $E$ agrees, up to a null set, with
$M\times B_R(z_0)$ for some $z_0\in\mathbb R^k$.
\end{theorem}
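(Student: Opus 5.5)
Our strategy is to split $\mathcal P_s(E)$ into a Euclidean part that sees only the slices and an interaction part that sees how the slices differ across $M$. For each $t$ we expand $h_t(x,y)$ in eigenfunctions of $-\Delta_M$,
\[
h_t(x,y)=|M|^{-1}+\sum_{j\ge1}e^{-\lambda_jt}\phi_j(x)\phi_j(y).
\]
Write $\int_E\int_{N\setminus E}h^N_t=\int_E\int_N h_t^N-\int_E\int_E h^N_t=|E|-\int\!\!\int h_t(x,y)\langle \chi_{E_x}*G_t,\chi_{E_y}\rangle\,d\mu\,d\mu$. Set $u_t(x)=\chi_{E_x}$ and decompose $u=\bar u+v$, where $\bar u(z)=|M|^{-1}\int_M\chi_{E_x}(z)\,d\mu(x)$ is the $M$-average. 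The constant mode gives $|M|^{-1}\langle G_t*\bar U,\bar U\rangle$ with $\bar U=\int_M u$. The nonconstant modes contribute at most $e^{-\lambda_1t}\int_M\langle G_t*v_x,v_x\rangle\,d\mu$, because $G_t*$ is a positive operator. We then write the heat functional in ``$L^2$ minus Gaussian self-energy'' form. This yields the lower bound
\[
\mathcal P_s(E)\ge \kappa\int_0^\infty\Big[\int_M\big(|E_x|-\langle G_t\chi_{E_x},\chi_{E_x}\rangle\big)d\mu+(1-e^{-\lambda_1t})\int_M\langle G_t v_x,v_x\rangle\,d\mu\Big]\frac{dt}{t^{1+s/2}}
\]
up to the correct bookkeeping of the cross term, which vanishes because $\int_M v=0$. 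In other words, $\mathcal P_s(E)\ge\int_M P^{\R^k}_{s,\mathrm{heat}}(E_x)\,d\mu(x)+\mathcal I(E)$. Here the first term is the fractional perimeter of the slices with the dimension-$n$ constant. The interaction gain $\mathcal I\ge0$ is quadratic in the deviation $v$ and is weighted by $1-e^{-\lambda_1 t}$.

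Next we rearrange the slices. By Riesz rearrangement, replacing every $E_x$ by a centered ball of the same volume $a(x)=|E_x|$ does not increase the first term, and this step is where the equality characterization in $\R^k$ enters. The interaction term, however, is not monotone under rearrangement, so we keep it in a form that depends only on the slice volumes. To do this we bound $\langle G_tv_x,v_x\rangle$ from below by something like $\|v_x\|_{L^2}^2$ minus a term of order $t^{-k/2}$. For large $t$ it is more useful to note that $\int_M\langle G_tv_x,v_x\rangle$ controls $\int_M|a(x)-A|^2$ at scales $t\lesssim R^2$. The result is a reduction to a one-variable problem: we must show
\[
\int_M f(a(x))\,d\mu+\mathcal I\ge |M|f(A),\qquad f(a)=P_s(B_{(a/\omega_k)^{1/k}})=c\,a^{(k-s)/k},
\]
subject to $\int_M a=|M|A$.

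The main obstacle is the comparison between the concavity loss and the interaction gain. The function $f$ is concave, so Jensen's inequality runs the wrong way: spreading out the slice volumes lowers $\int f(a)$ by roughly $\tfrac12|f''(A)|\int(a-A)^2\sim R^{-k-s}\int(a-A)^2$. The gain must absorb this loss. For the gain we use the $t$-integral $\int_0^\infty(1-e^{-\lambda_1t})\langle G_t v,v\rangle t^{-1-s/2}dt$. Only times $t\gtrsim\lambda_1^{-1}$ count, and on the relevant scale $t\sim R^2$ the bound $\langle G_tv,v\rangle\gtrsim t^{-k/2}(\int v)^2$ gives a gain of order $\lambda_1^{s/2}R^{-k}\cdot(\ldots)$ when $R^2\lambda_1\gtrsim1$. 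Comparing the two explicit power laws therefore produces a condition $R\sqrt{\lambda_1}\ge T_{k,s}(1)$, where $T_{k,s}(1)$ is defined as in Definition~\ref{def:rho}. Large deviations, for instance $a(x)=0$ on part of $M$, cannot be treated by the second-order Taylor bound. There we instead use the global inequality $f(a)\ge f(A)\,a/A$-type chord estimates, together with the $L^1$-type gain, and this case analysis is where we expect the real work to lie.

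For equality, every inequality above must be an equality. Equality in the interaction comparison forces $a(x)\equiv A$, so $v$ has zero slice volumes and $\mathcal I=0$. Since the factor $1-e^{-\lambda_1t}$ is strictly positive, $\mathcal I=0$ in turn forces $v\equiv0$, so all slices coincide almost everywhere with a single set $F$. Equality in Riesz then forces $F$ to be a ball $B_R(z_0)$ by the Euclidean uniqueness theorem, and hence $E=M\times B_R(z_0)$ up to a null set.
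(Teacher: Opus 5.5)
\textbf{Gap 1: the reduction to one variable.} Your spectral step is correct. Since $\int_M\langle G_t\chi_{E_x},\chi_{E_x}\rangle\,d\mu=|M|\langle G_t\bar u,\bar u\rangle+\int_M\langle G_tv_x,v_x\rangle\,d\mu$, your lower bound for $H_t(E)$ is Lemma~\ref{lem:spectral} rewritten. The problem comes next. You rearrange only the slice term, so you then need a lower bound for $\mathcal I(E)$ in terms of the slice volumes alone, and no such bound exists.

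Split $M$ into two halves $M_1,M_2$ and take $E=(M_1\times F_1)\cup(M_2\times F_2)$ with $|F_{1,2}|=(1\pm\epsilon)A$. Let each $F_i$ be a union of balls of radius at most $\rho$ whose centers are at mutual distance at least $L$. Then $v_x=\pm\frac12(\chi_{F_1}-\chi_{F_2})$, so $\int_M\langle G_tv_x,v_x\rangle\,d\mu\le\frac{|M|}4\sum_i\langle G_t\chi_{F_i},\chi_{F_i}\rangle$. Moreover $\langle G_t\chi_{F},\chi_{F}\rangle\le|F|\bigl(\omega_k\rho^k(4\pi t)^{-k/2}+\int_{|\zeta|\ge L-2\rho}G_t(\zeta)\,d\zeta\bigr)$. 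Dominated convergence with majorant $|F_i|\min\{\lambda_1t,1\}t^{-1-s/2}$ then gives $\mathcal I\to0$ as $\rho\to0$ and $L\to\infty$, while $\int_M(a-A)^2\,d\mu=|M|\epsilon^2A^2$ stays fixed. For the same reason, $\langle G_tv,v\rangle\gtrsim t^{-k/2}(\int v)^2$ is false without control of the support of $v$.

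In this example the slice perimeters blow up. So the gain has to be paid for by the slice term, whose excess over the ball value you discard when you rearrange.

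The missing idea is to regroup before rearranging. Moving $(1-e^{-\lambda_1t})\int_M\langle G_t\chi_{E_x},\chi_{E_x}\rangle\,d\mu$ out of $\mathcal I$ and into the slice term gives
\[
H_t(E)\ge(1-e^{-\lambda_1t})|M|\Bigl(A-\iint\bar\rho_E\,G_t\,\bar\rho_E\Bigr)+e^{-\lambda_1t}\int_M\Bigl(|E_x|-\iint_{E_x\times E_x}G_t(z-w)\,dz\,dw\Bigr)d\mu(x).
\]
Here both subtracted interactions can only increase when the slices are replaced by centered balls:
\begin{itemize}
\item Riesz on each slice bounds the slice bracket below by $U_t(a_E(x))$.
\item Riesz on each pair $E_x\times E_y$ gives $\iint\bar\rho_EG_t\bar\rho_E\le\iint fG_tf$, with $f=\fint_M\chi_{B_{r(a_E(x))}}\,d\mu(x)$.
\item Lemma~\ref{lem:radialcomparison} turns $J_t(A,A)-\iint fG_tf$ into the volume-only gain $\fint_M\mathcal G_t(a_E)\,d\mu$ (Proposition~\ref{prop:reduction}).
\end{itemize}
Only the $e^{-\lambda_1t}$-weighted concavity gap $\mathcal L$ is then left to absorb.

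\textbf{Gap 2: the one-variable comparison.} This is the core of the paper, and in your proposal it remains heuristic. Your second-order Taylor balance gives the right scaling $(R\sqrt{\lambda_1})^s$. It says nothing, however, about large deviations such as $a=0$ or $a\gg A$, which you leave open. It also cannot produce the specific threshold $T_{k,s}(1)$ of Definition~\ref{def:rho}.

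The paper proves $\mathcal G\ge\mathcal L$ on all of $[0,\infty)$ through the radial derivative $q_1=[h-h(R)]-[V-V(R)]$.
\begin{itemize}
\item Writing $K_\lambda$ as a positive superposition of resolvent kernels makes $V$ concave on $[0,R]$ and convex on $[R,\infty)$. Since $-h'$ is strictly decreasing, $q_1$ is strictly convex on $(0,R]$.
\item Hence the whole range $a<A$ reduces to the sign of $\mathcal G-\mathcal L$ at $a=0$. The endpoint identity \eqref{eq:endpoint} computes this exactly, and it is condition $(\mathrm I_1)$.
\item The range $a>A$ is handled separately for $k=1$ (exact one-dimensional kernels), $k=2$ (convexity of $V$ on $[R,2R]$ and the cone bound \eqref{eq:cone}), and $k\ge3$ (monotonicity of $r^{k-1}q_1'$ and $V(R)\ge h(R)$). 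This gives $(\mathrm{II}_1)$.
\end{itemize}

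\textbf{Equality case.} Strict positivity of $\mathcal G-\mathcal L$ away from $A$, including at $a=0$, is exactly what your equality argument needs to force $a\equiv A$. Once that is available, your deduction is a valid alternative to the paper's route through $D(\bar\rho_E)=0$ and the Burchard--Hajaiej equality case: $\mathcal I=0$ forces $v\equiv0$, and then the common slice is a ball.
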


We measure the distance from $E$ to straight cylinders by the \emph{asymmetry}
\[
\alpha(E)=\inf_{z_0\in\mathbb R^k}
 \frac{|E\triangle(M\times B_R(z_0))|}{|E|}\in[0,2).
\]

\begin{theorem}\label{thm:B}
In the setting of Theorem~\ref{thm:A}, suppose
$R\sqrt{\lambda_1(M)}\ge T_{k,s}(2)$. Then, with the positive coefficients
$C_1$ and $C_2$ defined in \eqref{eq:Cchoice},
\[
\frac{\mathcal P_s(E)-\mathcal P_s(M\times B_R)}
 {\mathcal P_s(M\times B_R)}
\ge C_1\bigl(k,s,R\sqrt{\lambda_1(M)}\bigr)
 \frac1{|E|}\int_M\frac{(|E_x|-A)^2}{|E_x|+A}\,d\mu(x),
\]
and
\[
\frac{\mathcal P_s(E)-\mathcal P_s(M\times B_R)}
 {\mathcal P_s(M\times B_R)}
\ge C_2(k,s)\alpha(E)^2.
\]
\end{theorem}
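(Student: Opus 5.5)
The plan is to run the argument behind Theorem~\ref{thm:A} in a quantitative form and keep the positive remainder that the uniqueness proof throws away. Split the heat kernel on $M$ spectrally as $h_t(x,y)=|M|^{-1}+\bigl(h_t(x,y)-|M|^{-1}\bigr)$. The constant part contributes $|M|^{-1}\int_0^\infty\int_M\int_M \mathcal I_t(E_x,E_y)\,d\mu\,d\mu\,dt/t^{1+s/2}$, where
\[
\mathcal I_t(F,G)=\int_F\int_{\R^k\setminus G}G_t(z-w)\,dw\,dz .
\]
The oscillating part is handled through the heat semigroup acting on $u(x,z)=\mathbf 1_E(x,z)-|M|^{-1}\int_M\mathbf 1_E(y,z)\,d\mu(y)$. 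Expanding $u$ in eigenfunctions of $-\Delta_M$, every nonzero mode decays at least like $e^{-\lambda_1 t}$. As a result, the fluctuation of the slices across $M$ costs an amount controlled below by $\int_0^\infty(1-e^{-\lambda_1 t})\,\|u\|^2\,dt/t^{1+s/2}$, which behaves like $\lambda_1^{s/2}$ times the $L^2$ norm of the fluctuation. Call this the \emph{interaction gain}.

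The Euclidean part is treated with Riesz rearrangement. Replacing each slice $E_x$ by the centered ball $B_{r(x)}$ with $\omega_k r(x)^k=|E_x|$ does not increase $\int\int\mathcal I_t(E_x,E_y)$. This reduces the problem to the one-variable functional
\[
\Phi(a,b)=\int_0^\infty \mathcal I_t\bigl(B_{r_a},B_{r_b}\bigr)\,\frac{dt}{t^{1+s/2}},
\]
where $r_a$ is the radius with $\omega_k r_a^k=a$. Because the profile $a\mapsto P_s(B_{r_a})\sim a^{(k-s)/k}$ is concave, pushing the slice volumes away from the mean $A$ can lower the cost; this is the \emph{concavity loss}. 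Its second-order size, measured relative to $\mathcal P_s(M\times B_R)$, is of order $R^{-s}$ times $\int_M(|E_x|-A)^2/(|E_x|+A)\,d\mu$, suitably normalized. The weight $(|E_x|+A)^{-1}$ is there so that the bound holds globally, including for slices far from $A$; I would get it from an elementary inequality for $a\mapsto a^{(k-s)/k}$ together with the cross terms of $\Phi$.

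The comparison of the two effects is where the threshold enters. The gain scales like $\lambda_1^{s/2}$ and the loss like $R^{-s}$, so the gain dominates with margin once $R\sqrt{\lambda_1}\ge T_{k,s}(2)$. The threshold $T_{k,s}(2)$, larger than $T_{k,s}(1)$, is chosen so that a fixed fraction of the gain survives. That surplus gives $C_1(k,s,R\sqrt{\lambda_1})$ times the weighted variance, which is the first inequality of Theorem~\ref{thm:B}. I expect this step to be the \textbf{main obstacle}: it requires a uniform lower bound on the gain for large deviations, not merely a second variation, and this is where the exact definition of $T_{k,s}$ in Definition~\ref{def:rho} has to be used.

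For the asymmetry bound, write
\[
|E\triangle (M\times B_R(z_0))|\le \int_M|E_x\triangle B_{r(x)}(z_0)|\,d\mu+\int_M |r(x)-R|\cdot(\ldots)\,d\mu .
\]
The second term is bounded by $\int_M\bigl||E_x|-A\bigr|\,d\mu$. By Cauchy--Schwarz, with weight $|E_x|+A$ and $\int(|E_x|+A)\,d\mu=2|E|$, its square is at most $2|E|$ times the weighted variance, which the first inequality already controls. The first term is handled by applying the quantitative fractional isoperimetric inequality in $\R^k$ slice by slice. I would use the sharp form with exponent $2$ in the Fraenkel asymmetry, in the version of Fusco--Millot--Morini or Figalli--Fusco--Maggi--Millot--Morini, for the Riesz-rearrangement deficit. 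Here the centers must be aligned to a common $z_0$; I would apply the quantitative Riesz inequality of Frank--Lieb type to the cross-slice interaction term, which penalizes misaligned balls. Combining these with Jensen's inequality gives $C_2(k,s)\alpha(E)^2$. Since $C_1$ is bounded below once $R\sqrt{\lambda_1}\ge T_{k,s}(2)$, the constant $C_2$ can be chosen depending only on $k$ and $s$.
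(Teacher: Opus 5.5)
Your spectral bookkeeping does not work, and the reduction to $\Phi$ fails with it. Write $\chi_E=\bar\rho_E+u$. Then $H_t(E)=\bigl(|E|-|M|\iint\bar\rho_E\,G_t\,\bar\rho_E\bigr)-\|e^{\frac t2\Delta_N}u\|_{L^2(N)}^2$, so the nonconstant modes enter with a minus sign. They are the source of the loss, not a gain.

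The constant-mode piece $|M|^{-1}\iint\mathcal I_t(E_x,E_y)$ tends to $\|u\|^2=|M|\int\bar\rho_E(1-\bar\rho_E)$ as $t\downarrow0$. It is therefore not integrable against $t^{-1-s/2}$ unless $E$ is a.e. of the form $M\times F$. Concretely, $\Phi(a,b)=+\infty$ whenever $a>b$, so $\iint\Phi(a_E(x),a_E(y))\,d\mu\,d\mu=+\infty$ for every nonconstant slice-volume function. Moreover, at each fixed $t$ the rearranged constant part is already at least $|M|U_t(A)$ by Lemma~\ref{lem:radialcomparison}, so it carries no concavity loss.

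Your gain $(1-e^{-\lambda t})\|u\|^2$ is legitimate only after you remove $\|u\|^2$ from the constant part. What remains is the Gagliardo-type energy $|M|\bigl(\|\bar\rho_E\|_2^2-\iint\bar\rho_E G_t\bar\rho_E\bigr)$ of the marginal, and slicewise rearrangement into $\Phi$ does not apply to it. Keeping both the full constant part and this gain overcounts.

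The split that works is Lemma~\ref{lem:spectral}: weight $1-e^{-\lambda t}$ on the marginal and $e^{-\lambda t}$ on the slice self-interactions. Lemma~\ref{lem:radialcomparison} then linearizes the marginal deficit into a gain $\mathcal G(a_E(x))$ that depends pointwise on the slice volume. The loss is the tangent gap $\mathcal L(a_E(x))$ of the ball heat loss.

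Even with this fixed, everything rests on $\mathcal G\ge2\mathcal L$ on all of $[0,\infty)$, and you leave that open with a scaling heuristic. The paper needs two ingredients for it. The endpoint identity at $a=0$ gives $(\mathrm I_2)$. For $a>A$, the convexity of $V$, obtained from the resolvent superposition, together with the curvature bounds on $h$, gives $(\mathrm{II}_2)$. The weight $(a+A)^{-1}$ then comes from the concavity and convexity of $V$ in Lemma~\ref{lem:Wlower}, not from an inequality for $a^{(k-s)/k}$.

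The asymmetry step also has a gap. No slice perimeter $P_s^{(k)}(E_x)$ appears anywhere in the reduction. Slices enter only through the weight $e^{-\lambda t}$, whose kernel decays exponentially on the scale $\lambda^{-1/2}$. The Fusco--Millot--Morini and Figalli--Fusco--Maggi--Millot--Morini theorems do not cover that kernel. In any case, the slice Riesz deficits are discarded; what survives the absorption is $\frac12D(\bar\rho_E)$, as in \eqref{eq:scalarretention}.

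The observation you are missing is \eqref{eq:alpharho}: $|E\triangle(M\times B_R(z_0))|=|M|\,\|\bar\rho_E-\chi_{B_R(z_0)}\|_1$. So $\alpha(E)$ is exactly the $L^1$ asymmetry of the marginal. Write $K_\lambda=\int_0^\infty\chi_{B_r}(-K_\lambda')\,dr$ and apply Theorem~\ref{thm:FL} to $D(\bar\rho_E)$ for $r\in[R/2,3R/2]$, as in Lemma~\ref{lem:layer}. This yields $\alpha(E)^2$ in one step, with no common-center alignment and no Cauchy--Schwarz split.
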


The coefficient $C_1$ is explicit, whereas $C_2$ also involves the constant in the Frank--Lieb theorem (Theorem~\ref{thm:FL}).
The thresholds are sufficient for each fixed exponent and depend on $s$.
We do not determine the optimal radius.

\subsection*{Periodic sets}
For a full-rank lattice $\Lambda\subset\mathbb R^m$ with fundamental cell $Q$, let the covering radius be $r_{\rm cov}(\Lambda)=\sup_{y\in\mathbb R^m}\operatorname{dist}(y,\Lambda)$. A set invariant under $\Lambda\times\{0\}$ has volume and perimeter measured in one cell; equivalently, it is a set on $(\mathbb R^m/\Lambda)\times\mathbb R^k$. Define its one-cell perimeter by
\begin{equation}\label{eq:periodic}
\mathcal P_s^\Lambda(\widetilde E)
=\int_{\widetilde E\cap(Q\times\mathbb R^k)}
 \int_{\mathbb R^{m+k}\setminus\widetilde E}
 \frac{dY\,dX}{|X-Y|^{m+k+s}}.
\end{equation}

The comparison sets $\mathbb R^m\times B_R$ are the straight cylinders of the periodic problem, with slabs when $k=1$.
\begin{corollary}\label{cor:torus}
Let $m,k\ge1$, $0<s<1$, and let $\Lambda\subset\mathbb R^m$ be
a full-rank lattice with fundamental cell $Q$ and covering radius
$r_{\rm cov}(\Lambda)$. Let $\widetilde E\subset\mathbb R^{m+k}$ be
measurable and invariant under translations by $\Lambda\times\{0\}$,
with $|\widetilde E\cap(Q\times\mathbb R^k)|=|Q|A$, $0<A<\infty$.
Put $R=(A/\omega_k)^{1/k}$. If
\[
R\ge\frac{T_{k,s}(1)}\pi r_{\rm cov}(\Lambda),
\]
then $\mathcal P_s^\Lambda(\widetilde E)
\ge\mathcal P_s^\Lambda(\mathbb R^m\times B_R)$, with equality
if and only if $\widetilde E$ agrees, up to a null set, with
$\mathbb R^m\times B_R(z_0)$ for some $z_0\in\mathbb R^k$.
For $m=1$ and $\Lambda=L\mathbb Z$, the condition is
$R\ge T_{k,s}(1)L/(2\pi)$.
\end{corollary}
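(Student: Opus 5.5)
The plan is to reduce Corollary~\ref{cor:torus} to Theorem~\ref{thm:A} applied with $M=T:=\mathbb R^m/\Lambda$, the flat torus, and $N=T\times\mathbb R^k$. This requires three steps: identify the periodic perimeter \eqref{eq:periodic} with a constant multiple of $\mathcal P_s$ on $N$; compute $\lambda_1(T)$ in terms of the dual lattice; and bound $\lambda_1(T)$ below by $\pi^2/r_{\rm cov}(\Lambda)^2$.

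\textbf{Step 1 (identification of the kernels).} A $\Lambda\times\{0\}$-invariant set $\widetilde E$ descends to a set $E\subset N$ with $|E|=|Q|A$, and $|T|=|Q|$. The heat kernel of $N$ is the periodization of the Euclidean heat kernel:
\[
h^N_t(X,Y)=\sum_{\gamma\in\Lambda}(4\pi t)^{-n/2}e^{-|\tilde X-\tilde Y-(\gamma,0)|^2/(4t)}.
\]
By Tonelli, the subordination integral in \eqref{eq:def} can be exchanged with the lattice sum. The Euclidean identity
\[
\kappa_{n,s}\int_0^\infty(4\pi t)^{-n/2}e^{-r^2/(4t)}\,t^{-1-s/2}\,dt=c\,r^{-n-s}
\]
holds with an explicit constant $c$; by the normalization stated after \eqref{eq:def}, $c=1$ (otherwise $c$ is a harmless positive constant). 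Unfolding the lattice sum then turns the integral over $T\times\mathbb R^k$ into an integral over $\widetilde E\cap(Q\times\mathbb R^k)$ against all of $\mathbb R^{n}\setminus\widetilde E$. Hence $\mathcal P_s(E)=c\,\mathcal P_s^\Lambda(\widetilde E)$. In particular, $\mathbb R^m\times B_R(z_0)$ corresponds to $T\times B_R(z_0)$, and null sets correspond to null sets. The inequality and the equality case therefore transfer directly from Theorem~\ref{thm:A}, since $R$ is defined identically in both statements.

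\textbf{Step 2 (spectral bound).} The eigenfunctions of $-\Delta_T$ are $e^{2\pi i\langle\xi,x\rangle}$ with $\xi\in\Lambda^*$, so
\[
\lambda_1(T)=4\pi^2\min_{\xi\in\Lambda^*\setminus\{0\}}|\xi|^2.
\]
It remains to show $|\xi|\ge 1/(2r_{\rm cov})$ for every nonzero $\xi\in\Lambda^*$. Fix such a $\xi$. The linear functional $x\mapsto\langle\xi,x\rangle$ takes integer values on $\Lambda$, so $\Lambda$ lies in the family of hyperplanes $\{\langle\xi,x\rangle\in\mathbb Z\}$, which are spaced $1/|\xi|$ apart. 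Choose a point $y$ with $\langle\xi,y\rangle=1/2$. Its distance to each of these hyperplanes, hence to $\Lambda$, is at least $1/(2|\xi|)$. Therefore $r_{\rm cov}\ge 1/(2|\xi|)$, which gives
\[
\lambda_1(T)\ge \frac{4\pi^2}{4r_{\rm cov}^2}=\frac{\pi^2}{r_{\rm cov}^2}.
\]
Consequently the hypothesis $R\ge T_{k,s}(1)\,r_{\rm cov}/\pi$ implies $R\sqrt{\lambda_1(T)}\ge T_{k,s}(1)$, and Theorem~\ref{thm:A} applies. For $m=1$ and $\Lambda=L\mathbb Z$ we have $r_{\rm cov}=L/2$, which yields the condition $R\ge T_{k,s}(1)L/(2\pi)$.

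\textbf{Main obstacle.} The only delicate point is Step~1: justifying the unfolding with Tonelli, because all integrands are nonnegative, and confirming the constant $c$ from the normalization of $\kappa_{n,s}$. Even if $c\neq1$, it is a fixed positive constant and does not affect the conclusion.
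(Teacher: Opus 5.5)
Your proposal is correct and follows the paper's proof essentially step for step. Both arguments identify $\mathcal P_s(E)$ with $\mathcal P_s^\Lambda(\widetilde E)$ through the method-of-images heat kernel, the subordination identity and unfolding (Lemma~\ref{lem:normalization}(a) applied in dimension $n$ gives $c=1$ exactly), and both then bound $\lambda_1=4\pi^2\min_{\xi\in\Lambda^*\setminus\{0\}}|\xi|^2$ below by $\pi^2/r_{\rm cov}(\Lambda)^2$ using a point on the half-integer hyperplane $\langle\xi,\cdot\rangle=\tfrac12$, so that Theorem~\ref{thm:A} applies.
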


By Lemma~\ref{lem:flat}, Theorem~\ref{thm:B} likewise applies to $\Lambda$-periodic
sets, with $M=\mathbb R^m/\Lambda$ and volumes measured in one cell. For a circle
of period $L$ the hypothesis reads $R\ge T_{k,s}(2)L/(2\pi)$, and
$R\sqrt{\lambda_1(M)}=2\pi R/L$.

\subsection*{Related work}
D\'avila, del Pino, Dipierro, and Valdinoci \cite{DDDV} constructed periodic minimizers in a cylindrically symmetric, even, monotone profile class and asked whether straight cylinders minimize at large volume. Cabr\'e, Csat\'o, and Mas proved existence and symmetry among all measurable periodic competitors and formulated the large-volume uniqueness conjecture \cite{CCM}. Corollary~\ref{cor:torus} proves this large-volume uniqueness conclusion in every ambient dimension $n\ge2$ for each fixed $s\in(0,1)$. For every $s_0\in(0,1)$, the results of \cite{CCM} also give $V_0>0$ such that, for $s\in(s_0,1)$, no singly periodic minimizer of volume less than $V_0$ per period is a cylinder. The second-variation criterion announced in \cite[Theorem~1.3]{CCM} concerns nonnegative second variation under volume-preserving periodic variations and is distinct from the global large-volume comparison proved here.

Alcover and Bruera \cite{AB} proved existence and symmetry of periodic minimizers
for a class of anisotropic kernels in every dimension $n\ge2$ and constructed planar
bifurcating families with constant anisotropic nonlocal mean curvature. Bruera
\cite{Bruera} classified the second-variation stability of planar strips in the
even, transversely symmetric periodic class and proved, in certain parameter
ranges, rigidity of nearby stable critical points. In the planar singly periodic
case the author proved in \cite{Seo} that strips of half-width $R\ge14L$, where
$L$ is the period, are the unique minimizers for every $s\in(0,1)$, with a
remainder controlling the slice lengths. The present paper
uses the same gain-versus-loss mechanism for arbitrary closed bases and
every $k\ge1$, and adds control of the asymmetry $\alpha(E)$; the price is a
threshold that depends on $s$.

For the Euclidean fractional perimeter, balls minimize at every volume \cite{FS},
with quantitative refinements in \cite{FFMMM, FMM}. For a circle base, cylindrical rearrangement with common-center equality
and the circle heat-kernel representation are treated in \cite{CCM};
\cite{CM} establishes rearrangement inequalities and equality cases
for periodic Gagliardo seminorms under its hypotheses. Here only the
Euclidean slices are rearranged; the geometry of the base enters through
its spectral gap.

\subsection*{Strategy of the proof}
Rearranging each slice into a ball does not suffice by itself: the energy of a ball is concave in its volume, so averaging the ball energies over the base favors unequal slice volumes. The proof shows that the interaction between slices more than compensates for this loss when the cylinder is sufficiently wide.

Section~\ref{sec:heat} develops the heat-kernel formulation and the reduction to slice volumes.
Let $\bar\rho_E(z)=|M|^{-1}\mu(\{x\in M:(x,z)\in E\})$ be the marginal of $E$ on
$\mathbb R^k$. It has mass $A$, and it measures the distance from $E$ itself to
cylinders:
$|E\triangle(M\times B)|=|M|\,\|\bar\rho_E-\chi_B\|_1$ for every
measurable $B\subset\mathbb R^k$.

Expanding in eigenfunctions of $M$ bounds the heat that has left $E$ at time $t$
from below by two terms: the heat lost in $\mathbb R^k$ by the density
$\bar\rho_E$, with weight $1-e^{-\lambda_1(M)t}$, and the average over $x\in M$
of the heat lost by the slices $E_x$, with weight $e^{-\lambda_1(M)t}$
(Lemma~\ref{lem:spectral}). Rearranging each slice into a centered ball and
integrating in $t$ gives (Proposition~\ref{prop:reduction})
\[
\begin{aligned}
\mathcal P_s(E)-\mathcal P_s(M\times B_R)
 &\ge \frac{\kappa_{n,s}}{\kappa_{k,s}}|M|
 \left(D(\bar\rho_E)-\frac1{|M|}\int_M\mathcal L(|E_x|)\,d\mu(x)\right),\\
D(\bar\rho_E)&\ge\frac1{|M|}\int_M\mathcal G(|E_x|)\,d\mu(x).
\end{aligned}
\]
Here $D(\bar\rho_E)\ge0$ is the amount by which the self-interaction of
$\bar\rho_E$ falls short of that of $\chi_{B_R}$, for an integrable radial kernel
$K_\lambda$ built from the weight $1-e^{-\lambda_1(M)t}$. The functions $\mathcal G$ and
$\mathcal L$ are nonnegative and vanish at $A$: the \emph{gain} $\mathcal G$ is a
lower bound for this deficit forced by unequal slice volumes, and the \emph{loss}
$\mathcal L$ comes from the concavity, as a function of volume, of the heat loss
of a ball, weighted by $e^{-\lambda_1(M)t}$.

The core of the paper is the one-variable comparison
(Proposition~\ref{prop:scalar})
\[
\mathcal G(a)\ge\theta\,\mathcal L(a)\quad(a\ge0),
\]
whenever $R\sqrt{\lambda_1(M)}\ge T_{k,s}(\theta)$ and $\theta\in\{1,2\}$.
Up to sign, the derivatives of $\mathcal G$ and $\mathcal L$ are $V-V(R)$ and
$h-h(R)$, where $V$ is the potential of $B_R$ for the kernel $K_\lambda$ and $h$
is the nonlocal mean curvature of spheres for the complementary kernel
$|z|^{-k-s}-K_\lambda(|z|)$. The comparison follows from convexity properties of
$V$, obtained by writing $K_\lambda$ as a positive superposition of resolvent
kernels, and from bounds on $h$ and $h'$ (Section~\ref{sec:scalar}). With
$\theta=1$ the gain absorbs the loss, which proves the inequality of
Theorem~\ref{thm:A}; equality forces equal slice volumes and then, by the
equality cases of the rearrangement inequalities, a single translated cylinder
(subsection~\ref{sec:proofA}). With $\theta=2$ half of $D(\bar\rho_E)$ survives. A
lower bound for $\mathcal G$ then gives the first estimate of
Theorem~\ref{thm:B}, and the quantitative Riesz inequality of Frank and Lieb,
applied to $K_\lambda$ through a layer-cake decomposition, gives the second
(subsection~\ref{sec:proofB}). Finally, subsection~\ref{sec:periodic} transfers the conclusions to flat tori.

\section{Heat-kernel formulation and reduction}\label{sec:heat}\label{sec:reduction}
Throughout, $k$, $s$, $M$ and $A>0$ are fixed, $R=(A/\omega_k)^{1/k}$, and
$\lambda=\lambda_1(M)$. Balls $B_r(z_0)\subset\mathbb R^k$ are open,
$B_r=B_r(0)$, and $e_1$ is the first coordinate vector. We write
$\fint_Mf\,d\mu=|M|^{-1}\int_Mf\,d\mu$, $\|\cdot\|_1$ for the norm of
$L^1(\mathbb R^k)$, and, for a density $\rho$ on $\mathbb R^k$,
$\iint\rho\,G_t\,\rho=\iint\rho(z)G_t(z-w)\rho(w)\,dz\,dw$.
For an integer $d\ge1$, $G_t^{(d)}(z)=(4\pi t)^{-d/2}e^{-|z|^2/(4t)}$ denotes the Gaussian on $\mathbb R^d$, so that $G_t=G_t^{(k)}$, and $B_1^d$ denotes the open unit ball of $\mathbb R^d$.
\subsection{Heat kernels and normalization}
The heat-kernel facts below can be found in \cite{Chavel, Grigoryan}. The heat kernel $h_t(x,y)$ of $M$ is smooth on $(0,\infty)\times M\times M$ and symmetric in $(x,y)$, and it is strictly positive. It satisfies
\begin{equation}\label{eq:mass}
 \int_Mh_t(x,y)\dd\mu(y)=1\qquad(x\in M,\ t>0).
\end{equation}
Let $0=\lambda_0<\lambda_1\le\lambda_2\le\cdots$ be the eigenvalues of $-\Delta_M$, repeated according to multiplicity. For these eigenvalues, let $\{\varphi_j\}$ be an orthonormal basis of $L^2(M)$ consisting of associated real eigenfunctions, with $\varphi_0=|M|^{-1/2}$. Then
\begin{equation}\label{eq:base-spectral-expansion}
 h_t(x,y)=\sum_{j\ge0}e^{-\lambda_jt}\varphi_j(x)\varphi_j(y),
\end{equation}
with absolute and uniform convergence on $[t_0,\infty)\times M\times M$ for each $t_0>0$.

For $E\subset N$ of finite measure and $t>0$ let
\begin{equation}\label{eq:Ht}
H_t(E)=\int_E\int_{N\setminus E}h_t^N(X,Y)\,dY\,dX
=|E|-\int_E\int_Eh_t^N(X,Y)\,dY\,dX
\end{equation}
be the heat that has left $E$ at time $t$; the second expression uses
$\int_Nh_t^N(X,Y)\,dY=1$, which follows from \eqref{eq:mass}. Thus
$\mathcal P_s(E)=\kappa_{n,s}\int_0^\infty H_t(E)\,t^{-1-s/2}\,dt$.
With the normalization in \eqref{eq:def} and the kernel convention in
\cite[Definition~2.3, equation~(9)]{CFS2}, $\mathcal P_s(E)$ is
$\kappa_{n,s}\Gamma(1-s/2)/s$ times the squared heat-kernel
fractional seminorm of $\chi_E$ on $N$, allowing the value $+\infty$.
Equivalently, up to normalization, it is the quadratic form of
$(-\Delta_N)^{s/2}$ at $\chi_E$.

For a measurable set $F\subset\R^k$ of finite measure let
\[
 P^{(k)}_s(F)=\int_F\int_{\R^k\setminus F}\frac{\dd w\dd z}{|z-w|^{k+s}}\in[0,+\infty]
\]
be its fractional perimeter. The following identity fixes the heat-kernel normalization relative to the Euclidean kernel.

\begin{lemma}\label{lem:normalization}
\begin{enumerate}
\item[(a)] Let $k\ge1$ and $0<s<1$. For every $r>0$,
\begin{equation}\label{eq:subordination}
 \kappa_{k,s}\int_0^\infty(4\pi t)^{-k/2}e^{-r^2/(4t)}\,\frac{\dd t}{t^{1+s/2}}=r^{-k-s}.
\end{equation}
Consequently, for every measurable $F\subset\R^k$ with $|F|<\infty$,
\begin{equation}\label{eq:euclideanheat}
 P^{(k)}_s(F)=\kappa_{k,s}\int_0^\infty\Big(|F|-\int_F\int_FG_t(z-w)\dd w\dd z\Big)\frac{\dd t}{t^{1+s/2}} .
\end{equation}
\item[(b)] Let $F\subset\R^k$ be measurable with $|F|<\infty$. Then
\[
\cP_s(M\times F)=c_{m,k,s}\,|M|\,P^{(k)}_s(F),\qquad c_{m,k,s}=\frac{\kappa_{n,s}}{\kappa_{k,s}}=\frac{\pi^{m/2}\,\Gamma\big(\frac{k+s}2\big)}{\Gamma\big(\frac{n+s}2\big)} .
\]
\item[(c)] For $k\ge1$ and $0<s<1$ the fractional perimeter of the unit ball of $\R^k$ equals $b_{k,s}$, where
\begin{equation}\label{eq:constants1}
b_{k,s}=P_s^{(k)}(B_1^k)=\frac{2^{1-s}\pi^{(k-1)/2}k\omega_k\Gamma((1-s)/2)}{s(k-s)\Gamma((k-s)/2)}.
\end{equation}
 Consequently $P_s^{(k)}(B_R)=b_{k,s} R^{k-s}$ and
\begin{equation}\label{eq:cylinder}
 \cP_s\big(M\times B_R\big)=c_{m,k,s}\,b_{k,s}\,|M|\,R^{k-s}\qquad(R>0).
\end{equation}
\end{enumerate}
\end{lemma}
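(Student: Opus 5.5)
For (a), the plan is to substitute $u=r^2/(4t)$ in the left side of \eqref{eq:subordination}. Then $t=r^2/(4u)$ and $dt/t=-du/u$, so
\[
\int_0^\infty(4\pi t)^{-k/2}e^{-r^2/(4t)}t^{-1-s/2}\,dt
=(4\pi)^{-k/2}\Bigl(\frac{r^2}{4}\Bigr)^{-(k+s)/2}\int_0^\infty u^{(k+s)/2-1}e^{-u}\,du .
\]
The last integral is $\Gamma((k+s)/2)$, so the whole expression equals $\pi^{-k/2}2^{s}\Gamma((k+s)/2)\,r^{-k-s}$. Multiplying by $\kappa_{k,s}=\pi^{k/2}/(2^s\Gamma((k+s)/2))$ gives exactly $r^{-k-s}$.

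For \eqref{eq:euclideanheat}, I would write $|F|-\iint_{F\times F}G_t=\int_F\int_{\R^k\setminus F}G_t(z-w)\,dw\,dz$, using $\int G_t=1$. Since all integrands are nonnegative, Tonelli lets me exchange the $t$-integral with the space integrals, and then \eqref{eq:subordination} with $r=|z-w|$ gives the claim. The identity holds in $[0,\infty]$, so the possibly infinite case needs no separate argument.

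For (b), with $E=M\times F$ the complement is $N\setminus E=M\times(\R^k\setminus F)$. Integrating $h_t(x,y)$ over $x,y\in M$ gives $|M|$ by \eqref{eq:mass}. Hence
\[
H_t(M\times F)=|M|\Bigl(|F|-\iint_{F\times F}G_t\Bigr),
\]
and by Tonelli and (a) we get $\cP_s(M\times F)=(\kappa_{n,s}/\kappa_{k,s})|M|P^{(k)}_s(F)$. Writing $n=m+k$, the ratio of the $\kappa$'s simplifies to $\pi^{m/2}\Gamma(\frac{k+s}{2})/\Gamma(\frac{n+s}{2})$.

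For (c), scaling $z\mapsto Rz$ shows $P_s^{(k)}(B_R)=R^{k-s}P_s^{(k)}(B_1)$, and \eqref{eq:cylinder} then follows from (b). The only genuine work, and the main obstacle, is the explicit value $b_{k,s}$. I would compute it by a divergence-theorem reduction. For the kernel $|z|^{-k-s}$ one has the vector field $\Phi(z)=-z|z|^{-k-s}/s$ with $\operatorname{div}\Phi=|z|^{-k-s}$ away from $0$. I would use this in the $w$-integral over $\R^k\setminus B_1$, then symmetrize; equivalently, one can use the known formula $P_s(B_1)=\frac{1}{s}\int_{\partial B_1}\int_{\R^k\setminus B_1}\ldots$ via the co-area/Crofton-type identity
\[
P_s(F)=\frac{1}{s}\int_{\partial F}\int_{\R^k\setminus F}\frac{(z-w)\cdot\nu(z)}{|z-w|^{k+s}}\,dw\,d\mathcal H^{k-1}(z).
\]
By rotational symmetry this reduces to $k\omega_k$ times a single integral at $z=e_1$. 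Slicing in polar coordinates around $e_1$ turns that integral into a one-dimensional Beta-type integral, $\int_{-1}^{1}(1-\tau^2)^{(k-3)/2}(\ldots)^{-(1-s)/2}$ or similar, which evaluates to $2^{1-s}\pi^{(k-1)/2}\Gamma((1-s)/2)/((k-s)\Gamma((k-s)/2))$.

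I expect the bookkeeping in this Beta-function evaluation to be the delicate part. A good consistency check is the case $k=1$: there $b_{1,s}=2\cdot 2\cdot 2^{-s}/(s(1-s))$ should agree with the direct computation $P_s((-1,1))=2\int_{-1}^1\int_1^\infty (w-z)^{-1-s}\,dw\,dz=\frac{2}{s}\int_{-1}^{1}(1-z)^{-s}\,dz=\frac{2\cdot2^{1-s}}{s(1-s)}$. The formula also matches the known value in the literature (\cite{FS}), which could be cited instead of recomputing it.
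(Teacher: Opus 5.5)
Parts (a) and (b), and the scaling step in (c), coincide with the paper's argument. The gap is in the value of $b_{k,s}$, which you rightly call the only genuine work. Your displayed boundary identity has the wrong domain for the inner integral. For $z\in\partial F$ the integrand $(z-w)\cdot\nu(z)\,|z-w|^{-k-s}$ decays only like $|w|^{1-k-s}$, which is not integrable over $\R^k\setminus F$, so the inner integral is not a Lebesgue integral. For instance, with $k=1$, $F=(-1,1)$, $z=1$, the ranges $w>1$ and $w<-1$ contribute $-\infty$ and $+\infty$. If you regularize over $B_\rho\setminus F$ and let $\rho\to\infty$, the integral of this odd kernel over $B_\rho$ tends to $0$. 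The exterior integral is then minus the interior one, and your formula returns $-P^{(k)}_s(F)$. Your $k=1$ check computes $P^{(k)}_s((-1,1))$ directly, so it never tests the identity.

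What your verbal plan actually produces is different. Apply the divergence theorem in $w$ over $\R^k\setminus F$ for fixed interior $z$; the flux through $\partial B_\rho$ is $O(\rho^{-s})$. Then integrate in $z$, exchange, and relabel, to get
\[
P^{(k)}_s(F)=\frac{1}{s}\int_{\partial F}\int_{F}\frac{(z-w)\cdot\nu(z)}{|z-w|^{k+s}}\,dw\,d\mathcal H^{k-1}(z),
\]
with the inner integral over $F$. For convex $F$ this integrand is nonnegative, and that is what justifies the Tonelli exchange.

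You also leave the Beta evaluation as ``or similar'', so the constant is never derived. With the corrected identity it is short. By rotational symmetry,
\[
P^{(k)}_s(B_1^k)=\frac{k\omega_k}{s}\int_{B_1^k}\frac{(e_1-w)\cdot e_1}{|e_1-w|^{k+s}}\,dw.
\]
Write $e_1-w=\rho\,\omega$; the constraint $w\in B_1^k$ becomes $0<\rho<2\omega_1$. So for $k\ge2$
\[
P^{(k)}_s(B_1^k)=\frac{k\omega_k}{s}\int_{\{\omega_1>0\}}\omega_1\int_0^{2\omega_1}\rho^{-s}\,d\rho\,d\sigma(\omega)
=\frac{2^{1-s}k\omega_k}{s(1-s)}\,|S^{k-2}|\int_0^1\tau^{2-s}(1-\tau^2)^{\frac{k-3}{2}}\,d\tau .
\]
The last integral is $\frac12 B(\frac{3-s}{2},\frac{k-1}{2})$. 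Together with $|S^{k-2}|=2\pi^{(k-1)/2}/\Gamma(\frac{k-1}{2})$, $\Gamma(\frac{3-s}{2})=\frac{1-s}{2}\Gamma(\frac{1-s}{2})$ and $\Gamma(\frac{k-s}{2}+1)=\frac{k-s}{2}\Gamma(\frac{k-s}{2})$, this gives exactly the stated $b_{k,s}$. The case $k=1$ is your direct computation.

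The paper reaches the same constant without any boundary integral. It writes
\[
P^{(k)}_s(B_1^k)=\int_{\R^k}|B_1^k\setminus(B_1^k-\zeta)|\,|\zeta|^{-k-s}\,d\zeta=k\omega_k\int_0^\infty Q_k(r)\,r^{-1-s}\,dr .
\]
It gets $Q_k'(r)=\omega_{k-1}(1-r^2/4)^{(k-1)/2}$ by slicing the ball with lines parallel to $e_1$. An integration by parts, which is where the factor $1/s$ appears, then lands on $B(\frac{1-s}{2},\frac{k+1}{2})$. That route needs no divergence theorem, no convexity for the exchange, and no control at infinity, and it treats $k=1$ in the same formula with $\omega_0=1$.
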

\begin{proof}
(a) Substituting $\sigma=r^2/(4t)$ rewrites the integral in \eqref{eq:subordination} as
\[
 (4\pi)^{-k/2}\Big(\frac{r^2}4\Big)^{-(k+s)/2}\int_0^\infty \sigma^{(k+s)/2-1}e^{-\sigma}\dd\sigma
 =2^s\pi^{-k/2}\,\Gamma\Big(\frac{k+s}2\Big)\,r^{-k-s}.
\]
For \eqref{eq:euclideanheat}, insert \eqref{eq:subordination} into the definition of $P_s^{(k)}(F)$ and use Gaussian mass conservation:
\[
\int_F\int_{\R^k\setminus F}G_t(z-w)\dd w\dd z
=|F|-\int_F\int_FG_t(z-w)\dd w\dd z.
\]

(b) For $E=M\times F$, \eqref{eq:Ht} and \eqref{eq:mass} give
\begin{align*}
 H_t(M\times F)&=|M|\,|F|-\int_M\int_Mh_t(x,y)\dd\mu(y)\dd\mu(x)\int_F\int_FG_t(z-w)\dd w\dd z\\
 &=|M|\Big(|F|-\int_F\int_FG_t(z-w)\dd w\dd z\Big).
\end{align*}
Multiply by $\kappa_{n,s}t^{-1-s/2}$, integrate in $t$, and apply \eqref{eq:euclideanheat}.

(c) Formula \eqref{eq:constants1} is proved in Appendix~\ref{app:ball}. The scaling $P_s^{(k)}(B_R)=R^{k-s}P_s^{(k)}(B_1^k)$ follows from the change of variables $z\mapsto Rz$, $w\mapsto Rw$, and \eqref{eq:cylinder} from Lemma~\ref{lem:normalization}(b).
\end{proof}

\subsection{Slices, marginal and spectral splitting}
Let $E\subset N$ be measurable with $|E|<\infty$. The slice $E_x$ is measurable for almost every $x\in M$, and the slice-volume function $a_E(x)=|E_x|$ is a measurable function with $\int_Ma_E\dd\mu=|E|$. Set $a_E(x)=0$ on the null set.

Define the marginal of $E$ by
\[
\bar\rho_E(z)=\frac{\mu(\{x\in M:(x,z)\in E\})}{|M|}\in[0,1].
\]
Then we have $\int_{\R^k}\bar\rho_E=A$ and
\begin{equation}\label{eq:alpharho}
|E\triangle(M\times B)|=|M|\,\|\bar\rho_E-\chi_B\|_1
\quad\text{for every measurable }B\subset\R^k.
\end{equation}
Thus $\alpha(E)=A^{-1}\inf_{z_0}\|\bar\rho_E-\chi_{B_R(z_0)}\|_1$.

\begin{lemma}\label{lem:spectral}
Let $E\subset N$ be measurable with $|E|=|M|A<\infty$. For every $t>0$,
\begin{equation}\label{eq:spectral}
\begin{aligned}
H_t(E)&\ge\big(1-e^{-\lambda t}\big)|M|\Big(A-\iint\bar\rho_E\,G_t\,\bar\rho_E\Big)\\
&+e^{-\lambda t}\int_M\Big(|E_x|-\iint_{E_x\times E_x}G_t(z-w)\dd z\dd w\Big)\dd\mu(x).
\end{aligned}
\end{equation}
\end{lemma}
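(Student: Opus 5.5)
We expand the self-interaction term in \eqref{eq:Ht} using the base spectral expansion \eqref{eq:base-spectral-expansion}. Put $f_z(x)=\chi_E(x,z)$, so that $f_z\in L^2(M)$ for a.e.\ $z$. Write $c_j(z)=\int_M f_z\varphi_j\,d\mu$ for its Fourier coefficients in the basis $\{\varphi_j\}$. Then
\[
\int_E\int_E h_t^N\,dY\,dX=\iint G_t(z-w)\sum_{j\ge0}e^{-\lambda_jt}c_j(z)c_j(w)\,dz\,dw .
\]
Exchanging sum and integral is justified because the series converges absolutely and uniformly on $M\times M$ for fixed $t>0$. Moreover $\int\!\!\int G_t(z-w)|f_z(x)||f_w(y)|$ is finite, since $\int\!\!\int G_t(z-w)\,a_E$-type bounds give at most $|E|\sup h_t$.

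The $j=0$ term equals $|M|^{-1}\iint G_t(z-w)\,|M|\bar\rho_E(z)\,|M|\bar\rho_E(w)$, that is, $|M|\iint\bar\rho_E G_t\bar\rho_E$. For $j\ge1$, the quantity $\iint G_t(z-w)c_j(z)c_j(w)$ is nonnegative, because $G_t$ is a positive-definite kernel: its Fourier transform $e^{-t|\xi|^2}$ is positive. Together with $e^{-\lambda_jt}\le e^{-\lambda t}$, this gives
\[
\sum_{j\ge1}e^{-\lambda_jt}\iint G_tc_jc_j\le e^{-\lambda t}\sum_{j\ge1}\iint G_tc_jc_j .
\]
By Parseval applied pointwise in $(z,w)$, $\sum_{j\ge0}c_j(z)c_j(w)=\int_M f_zf_w\,d\mu$. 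Hence
\[
\sum_{j\ge1}\iint G_tc_jc_j=\int_M\iint_{E_x\times E_x}G_t\,dz\,dw\,d\mu(x)-|M|\iint\bar\rho_EG_t\bar\rho_E .
\]
This identity needs Fubini: $\int_M\iint G_t f_z(x)f_w(x)$ is finite, being bounded by $\int_M|E_x|\,d\mu=|E|$ since $\int G_t=1$.

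Combining the two pieces, the self-interaction is at most
\[
(1-e^{-\lambda t})|M|\iint\bar\rho_EG_t\bar\rho_E+e^{-\lambda t}\int_M\iint_{E_x\times E_x}G_t\,d\mu .
\]
We now write $|E|=(1-e^{-\lambda t})|M|A+e^{-\lambda t}\int_M|E_x|\,d\mu$, using $\int_M|E_x|\,d\mu=|E|=|M|A$. Subtracting the bound on the self-interaction from this splitting of $|E|$ yields \eqref{eq:spectral}.

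The main technical point is the rigorous interchange of the eigenfunction sum with the $z,w$ integrals. The cleanest route is to avoid pointwise series manipulations and use the operator form instead. For fixed $t$, the operator $e^{t\Delta_M}-|M|^{-1}\langle\cdot,1\rangle$ is positive semidefinite on $L^2(M)$ and dominated by $e^{-\lambda t}(I-P_0)$, where $P_0$ is the projection onto constants. Tensoring with convolution by $G_t$, which is positive and bounded on $L^2(\mathbb R^k)$, preserves this ordering on $L^2(N)$. We then test the resulting operator inequality against $\chi_E\in L^2(N)$. This gives the same inequality without any convergence issues.
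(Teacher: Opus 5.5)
Your argument is correct and is essentially the paper's. The paper writes the self-interaction as $\|e^{\frac t2\Delta_N}\chi_E\|^2$ and applies the spectral-gap bound on $M$ fiberwise in $z$ to $G_{t/2}*\chi_{E_x}$; this is your operator inequality $(e^{t\Delta_M}-P_0)\otimes(G_t*\cdot)\le e^{-\lambda t}(I-P_0)\otimes(G_t*\cdot)$ tested on $\chi_E$, with positivity of the Gaussian factor coming from $G_t=G_{t/2}*G_{t/2}$ instead of from its Fourier transform. In your mode-by-mode version, justify the interchange of $\sum_j$ with the $(z,w)$-integral by the Bessel bound $\sum_j|c_j(z)c_j(w)|\le\|f_z\|\,\|f_w\|$ together with Young's inequality (total at most $|E|$), not by the Fubini estimate you quote; the operator or half-time formulation makes this point moot.
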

\begin{proof}
Since $|E|<\infty$, $\chi_E\in L^2(N)$.
Put $g(x,\cdot)=G_{t/2}*\chi_{E_x}$. By the semigroup property and the product structure $h^N_t=h_t\,G_t$,
\[
\int_E\int_E h^N_t=\big\|e^{\frac t2\Delta_N}\chi_E\big\|^2_{L^2(N)}=\int_{\R^k}\big\|e^{\frac t2\Delta_M}g(\cdot,z)\big\|^2_{L^2(M)}\dd z .
\]
For $\varphi\in L^2(M)$ with mean $\bar\varphi$ the spectral expansion \eqref{eq:base-spectral-expansion} gives
\[
\|e^{\frac t2\Delta_M}\varphi\|^2\le|M|\bar\varphi^{\,2}+e^{-\lambda t}(\|\varphi\|^2-|M|\bar\varphi^{\,2}).
\]
 Apply this with $\varphi=g(\cdot,z)$, whose mean is $(G_{t/2}*\bar\rho_E)(z)$, and integrate in $z$:
\[
\int_E\int_E h^N_t\le\big(1-e^{-\lambda t}\big)|M|\,\|G_{t/2}*\bar\rho_E\|^2_{L^2(\R^k)}+e^{-\lambda t}\|g\|^2_{L^2(N)} .
\]
Since $\|G_{t/2}*\bar\rho_E\|^2=\iint\bar\rho_EG_t\bar\rho_E$ and $\|g\|^2=\int_M\iint_{E_x\times E_x}G_t\dd\mu$, the claim follows from \eqref{eq:Ht} and $|E|=|M|A=\int_M|E_x|\dd\mu$.
\end{proof}

The two spectral factors induce the positive measures
\[
\dd\nu_{\rm gap}(t)=\kappa_{k,s}(1-e^{-\lambda t})t^{-1-s/2}\dd t,\qquad \dd\nu_{\rm rem}(t)=\kappa_{k,s}e^{-\lambda t}t^{-1-s/2}\dd t.
\]

Their sum is $\kappa_{k,s}t^{-1-s/2}\dd t$. An integration by parts gives
\begin{equation}\label{eq:gapmass}
\ell:=\int_0^\infty\dd\nu_{\rm gap}=\frac{2\kappa_{k,s}\Gamma(1-s/2)}s\lambda^{s/2}.
\end{equation}

We also record the following moment of the remainder measure:
\begin{equation}\label{eq:eta}
\eta:=\int_0^\infty\sqrt{t/\pi}\,d\nu_{\rm rem}(t)
=\frac{\kappa_{k,s}\Gamma((1-s)/2)}{\sqrt\pi}\,\lambda^{(s-1)/2}.
\end{equation}

The gap measure has finite mass $\ell$; the remainder measure has infinite total mass. Weighted integrals involving ball heat loss will be justified separately.
\subsection{Ball functions and radial comparison}
For $p\ge0$ put $r(p)=(p/\omega_k)^{1/k}$ and let $B_{r(p)}\subset\R^k$ be the centered open ball of volume $p$, with $r(0)=0$ and $B_{r(0)}=\varnothing$. For $p,q\ge0$ and $t>0$ denote their Gaussian interaction by
\[
J_t(p,q)=\int_{B_{r(p)}}\int_{B_{r(q)}}G_t(z-w)\dd w\dd z .
\]

Define $U_t(p)=p-J_t(p,p)$. Gaussian mass conservation gives $0\le J_t(p,q)\le\min\{p,q\}$ and $0\le U_t(p)\le p$.

For $t>0$, define $u_t(r)$ for $r\ge0$ and $\delta_t(r)$ for $r>0$ by
\[
u_t(r)=\int_{B_R}G_t(re_1-w)\dd w,\qquad \delta_t(r)=\frac12-\int_{B_r}G_t(re_1-w)\dd w.
\]
Thus $u_t$ is the heat potential of the fixed ball $B_R$ evaluated
at distance $r$ from its center, whereas $\delta_t(r)$ is the amount
by which the heat potential of the variable ball $B_r$ falls short
of $1/2$ on its boundary.
The following properties give the concavity needed for the tangent gaps.
\begin{lemma}\label{lem:ballfunctions}
For $p>0$ the volume derivatives satisfy
\begin{equation}\label{eq:dJ}
\partial_pJ_t(p,A)=u_t(r(p)),\qquad \frac{d}{dp}J_t(p,p)=1-2\delta_t(r(p)),\qquad U_t'(p)=2\delta_t(r(p)).
\end{equation}

The function $u_t$ is nonincreasing on $[0,\infty)$, and $\delta_t$ is decreasing on $(0,\infty)$, with
\begin{equation}\label{eq:deltamon}
0<\delta_t(r)<\frac12\qquad(r>0),\qquad \delta_t(r)\uparrow\frac12\ (r\downarrow0),\qquad \delta_t(r)\downarrow0\ (r\uparrow\infty).
\end{equation}

Both $J_t(\cdot,A)$ and $U_t$ are concave. Moreover,
\begin{equation}\label{eq:heatloss}
U_t(p)\le\sqrt{t/\pi}\,k\omega_k r(p)^{k-1},\qquad \delta_t(r)\le\frac{k}{2\sqrt\pi}\frac{\sqrt t}{r}.
\end{equation}
\end{lemma}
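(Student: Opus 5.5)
The plan is to differentiate in $p$ under the integral, using polar coordinates in the ball variable and the coarea formula on spheres. For the first identity in \eqref{eq:dJ}, write
\[
J_t(p,A)=\int_{B_{r(p)}}u_t^\sharp(z)\,dz,\qquad u_t^\sharp(z)=\int_{B_R}G_t(z-w)\,dw .
\]
The function $u_t^\sharp$ is continuous and radial, with $u_t^\sharp(z)=u_t(|z|)$ by rotation invariance of $G_t$. The derivative of $\int_{B_{r(p)}}f$ with respect to the volume $p$ equals the average of $f$ over the sphere $\partial B_{r(p)}$, since $dp=|\partial B_r|\,dr$. For radial $f$ this average is just $u_t(r(p))$.

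For $J_t(p,p)$, apply the symmetric version of the same argument: both ball variables move, which gives
\[
\frac{d}{dp}J_t(p,p)=2\int_{B_{r(p)}}G_t(re_1-w)\,dw ,\qquad r=r(p).
\]
This equals $1-2\delta_t(r)$ by the definition of $\delta_t$. Then $U_t'(p)=1-(1-2\delta_t)=2\delta_t(r(p))$.

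Next come the monotonicity statements. That $u_t$ is nonincreasing follows from the Riesz/Anderson fact that the convolution of two symmetric decreasing functions ($\chi_{B_R}$ and $G_t$) is symmetric decreasing. For $\delta_t$, I would rescale: $\int_{B_r}G_t(re_1-w)\,dw$ is the probability that $re_1+\sqrt{2t}\,Z$ lies in $B_r$, with $Z$ standard Gaussian. Rescaling by $r$, this is the Gaussian mass of the unit ball centered at $-e_1$ with variance $2t/r^2$. As $r$ grows, the ball $B_1(-e_1)$ dilated about its boundary point $0$ increases to the half-space. Hence the mass, $\tfrac12-\delta_t$, is strictly increasing in $r$ and stays below $\tfrac12$, because the ball is strictly contained in the half-space $\{z_1<0\}$ whose Gaussian mass is $\tfrac12$. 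The same picture gives the limits: the mass tends to $\tfrac12$ as $r\to\infty$ and to $0$ as $r\to0$. Strictness of the monotonicity comes from the strict nesting of the dilated balls on a set of positive measure.

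Concavity is then immediate: $p\mapsto u_t(r(p))$ and $p\mapsto 2\delta_t(r(p))$ are nonincreasing, and they are the derivatives of $J_t(\cdot,A)$ and $U_t$.

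For \eqref{eq:heatloss}, the bound on $\delta_t$ uses the complement of $B_r$ within the half-space $\{w_1<r\}$. That region lies in the slab $r-r^{-1}|w'|^2\lesssim w_1<r$; more precisely, $\{w_1<r\}\setminus B_r\subset\{w_1\ge r-|w'|^2/r\}$ with $w'$ the transverse variables. The Gaussian mass of that region is at most
\[
\int G^{(k-1)}_t(w')\,\frac{|w'|^2}{r}\,\sup G^{(1)}_t \;=\;\frac{2(k-1)t}{r}\,(4\pi t)^{-1/2},
\]
which is of order $\sqrt t/r$. Checking the constant $k/(2\sqrt\pi)$, I get $(k-1)\sqrt t/(r\sqrt\pi)$, which is at most $\tfrac{k}{2\sqrt\pi}\sqrt t/r$ only when $k\le 2$ or similar. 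If the constant fails, I would instead fall back on the bound for $U_t$. By the semigroup property, $U_t(p)=\int_{B}\int_{B^c}G_t$ is the heat-kernel perimeter at time $t$. The classical bound $\int_F\int_{F^c}G_t\le\sqrt{t/\pi}\,\mathrm{Per}(F)$ (Ledoux/Miranda et al.) gives $U_t(p)\le\sqrt{t/\pi}\,k\omega_kr^{k-1}$ directly. Since $U_t$ is concave with $U_t'=2\delta_t$, the bound on $\delta_t$ follows from comparing $U_t'(p)$ with $U_t(p)/p$ only in the wrong direction, so I would handle $\delta_t$ separately. The main obstacle is getting exactly the stated constant $k/(2\sqrt\pi)$ for $\delta_t$. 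I would try writing $\tfrac12-\int_{B_r}$ via the divergence theorem along the sphere and the flux of $\sqrt t$-scaled Gaussians to extract the $k$-dependence.
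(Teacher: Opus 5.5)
Your treatment of \eqref{eq:dJ}, the monotonicity and limits of $u_t$ and $\delta_t$, concavity, and the bound on $U_t$ is sound. For the $U_t$ bound you cite the general heat-content inequality $\int_F\int_{\mathbb R^k\setminus F}G_t\le\sqrt{t/\pi}\,\mathrm{Per}(F)$. The paper instead proves the ball case in one line from $|B_r\setminus(B_r-\zeta)|\le\omega_{k-1}r^{k-1}|\zeta|$ and the first absolute moment of $G_t$.

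The genuine gap is the second inequality in \eqref{eq:heatloss}, which you leave unproved. Your direct estimate does not work. The inclusion $\{w_1<r\}\setminus B_r\subset\{w_1\ge r-|w'|^2/r\}$ is false: it ignores the part of $\{w_1<r\}\setminus B_r$ lying behind the ball (e.g.\ $w=-2re_1$) and the part with $|w'|\ge r$. For $k=1$ it would even give $\delta_t=0$, whereas $\delta_t(r)$ is the positive Gaussian mass, seen from $re_1$, of $(-\infty,-r]$. Even granting the inclusion, the constant $(k-1)/\sqrt\pi$ exceeds $k/(2\sqrt\pi)$ for $k\ge3$.

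The missing idea is the one you discarded, because the concavity comparison goes in the right direction. Since $U_t$ is concave with $U_t(0)=0$, the tangent line at $p$ lies above the graph. Evaluating it at $0$ gives $0=U_t(0)\le U_t(p)-pU_t'(p)$, that is, $pU_t'(p)\le U_t(p)$. So the derivative is bounded above by the secant slope from the origin, which is exactly the direction you need for an upper bound on $\delta_t\ge0$. With $U_t'(p)=2\delta_t(r)$, $p=\omega_kr^k$ and your bound on $U_t$, this reads $2\omega_kr^k\delta_t(r)\le\sqrt{t/\pi}\,k\omega_kr^{k-1}$. That is precisely $\delta_t(r)\le\frac{k}{2\sqrt\pi}\frac{\sqrt t}{r}$, and it is how the paper concludes.
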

\begin{proof}
The convolution $G_t*\chi_{B_R}$ is radial and radially nonincreasing; differentiation with respect to volume gives \eqref{eq:dJ}.
Moreover $\int_{B_r}G_t(re_1-w)\,dw=\int_{B_r(-re_1)}G_t$, and the balls
$B_r(-re_1)=\{\zeta:|\zeta|^2<-2r\,\zeta\cdot e_1\}$ increase with $r$, shrink to
a null set as $r\downarrow0$, and exhaust the half-space $\{\zeta\cdot e_1<0\}$,
of Gaussian mass $\frac12$, as $r\uparrow\infty$. Since $G_t>0$, this gives the
monotonicity of $\delta_t$ and \eqref{eq:deltamon}; concavity follows from
\eqref{eq:dJ}. For the heat-loss bound, since
$|B_{r(p)}\setminus(B_{r(p)}-\zeta)|\le\omega_{k-1}r(p)^{k-1}|\zeta|$,
\[
\begin{aligned}
U_t(p)=\int_{\mathbb R^k}G_t(\zeta)\,|B_{r(p)}\setminus(B_{r(p)}-\zeta)|\,d\zeta
&\le\omega_{k-1}r(p)^{k-1}\int_{\mathbb R^k}G_t(\zeta)|\zeta|\,d\zeta\\
&=\omega_{k-1}r(p)^{k-1}\,\frac{2\sqrt t\,\Gamma((k+1)/2)}{\Gamma(k/2)}\\
&=\sqrt{t/\pi}\,k\omega_k\,r(p)^{k-1}.
\end{aligned}
\]
Since $U_t$ is concave and vanishes at $0$, we have $pU_t'(p)\le U_t(p)$, which gives the second bound in \eqref{eq:heatloss}.
\end{proof}

By Lemma~\ref{lem:normalization}, the integrated ball identity is
\begin{equation}\label{eq:P}
\kappa_{k,s}\int_0^\infty U_t(p)\frac{\dd t}{t^{1+s/2}}=b_{k,s}r(p)^{k-s}\qquad(p\ge0).
\end{equation}

At $p=0$ the right side is zero, since $k-s>0$.
We use the Riesz rearrangement inequality \cite{Riesz} in the following two forms, both covered by \cite[Theorem~2]{BH}. If
$F_1,F_2\subset\mathbb R^k$ are measurable sets of finite measure and $t>0$, then
\begin{equation}\label{eq:riesz}
\int_{F_1}\int_{F_2}G_t(z-w)\,dw\,dz\le J_t(|F_1|,|F_2|).
\end{equation}
If $\rho:\mathbb R^k\to[0,1]$ is integrable with symmetric decreasing
rearrangement $\rho^*$, and $K\ge0$ is radial and radially nonincreasing, then
\begin{equation}\label{eq:rieszfn}
\iint\rho(z)K(z-w)\rho(w)\,dz\,dw\le\iint\rho^*(z)K(z-w)\rho^*(w)\,dz\,dw.
\end{equation}

The next lemma compares a radial density with the indicator of the
ball of the same mass. In Proposition~\ref{prop:reduction} it is applied
to an average of indicators of centered balls, which in general is not
the symmetric decreasing rearrangement of $\bar\rho_E$.

\begin{lemma}\label{lem:radialcomparison}
Let $K$ be a nonnegative bounded integrable function on $\mathbb R^k$
that is radial and radially nonincreasing. Let $f:\mathbb R^k\to[0,1]$
be radial and radially nonincreasing with $\int f=|B_R|$, where $R>0$.
Write $\chi=\chi_{B_R}$. Then
\[
\iint[\chi(z)\chi(w)-f(z)f(w)]K(z-w)\,dz\,dw
\ge\int(\chi-f)(K*\chi)\,dz\ge0.
\]
If $K*\chi$ is strictly radially decreasing, equality of the two
quadratic interactions implies $f=\chi$ almost everywhere.
\end{lemma}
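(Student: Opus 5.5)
Write $g=f-\chi$ and $V=K*\chi$. The plan is to expand the quadratic form in $g$ and control the quadratic part by a positivity argument. Since $K$ is even,
\[
\iint[\chi(z)\chi(w)-f(z)f(w)]K(z-w)\,dz\,dw=-2\int gV\,dz-\iint g(z)K(z-w)g(w)\,dz\,dw .
\]
The first inequality $\iint[\chi\chi-ff]K\ge\int(\chi-f)V$ is then equivalent to
\[
\iint g(z)K(z-w)g(w)\,dz\,dw\le\int(\chi-f)V=-\int gV .
\]
Rewriting, this is $\iint\chi K\chi-\iint fKf\ge\int(\chi-f)V$, i.e.
\[
\int\chi V-\iint fKf\ge\int\chi V-\int fV,
\]
i.e. $\iint fKf\le\int fV=\iint fK\chi$. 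I will prove this last form, $\iint f(z)K(z-w)\bigl(\chi(w)-f(w)\bigr)\,dz\,dw\ge0$, i.e. $\int(\chi-f)\,(K*f)\ge0$.

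This is a bathtub argument. Because $f$ is radial nonincreasing and $K$ is radial nonincreasing, $K*f$ is radial and radially nonincreasing; this is a standard fact, and follows by layer-cake, since each $K*\chi_{B_r}$ has this property as a convolution of symmetric decreasing functions. The function $\chi-f$ is $\ge0$ on $B_R$ (as $f\le1$), is $\le0$ outside $B_R$ (as $f\ge0$), and has integral $0$. For a radial nonincreasing $\Phi$ with value $\Phi(R)$ at radius $R$ we then get
\[
\int(\chi-f)\Phi=\int(\chi-f)\bigl(\Phi-\Phi(R)\bigr)\ge0,
\]
since both factors have the same sign pointwise. With $\Phi=K*f$ this proves the first inequality, and with $\Phi=V$ the same argument gives the second inequality $\int(\chi-f)V\ge0$. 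Boundedness and integrability of $K$ make all integrals finite and justify the expansions.

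For the equality statement: equality of the two quadratic interactions means the left side is $0$, so both inequalities are equalities and in particular $\int(\chi-f)(V-V(R))=0$. The integrand is pointwise $\ge0$, so it vanishes a.e. If $V$ is strictly radially decreasing, then $V-V(R)\ne0$ off the sphere $|z|=R$, hence $\chi=f$ a.e. The main subtlety is the radial monotonicity of $K*f$ (and the value at $|z|=R$ being well defined); I would handle it via the layer-cake decomposition of $f$ into centered balls together with the Riesz/Hardy–Littlewood fact that convolution of symmetric decreasing functions is symmetric decreasing, using continuity of $K*f$ since $K\in L^1\cap L^\infty$.
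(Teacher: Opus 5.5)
Your proof is correct and is essentially the paper's argument. Your reduction to $\int(\chi-f)(K*f)\ge0$ is exactly the paper's splitting $\iint[\chi\chi-ff]K=\int(\chi-f)\,K*(\chi+f)$; the detour through $g=f-\chi$ is harmless but unnecessary. After that you use the same bathtub comparison against the value at radius $R$, and the same equality argument based on strict radial decrease of $K*\chi$ off the null set $\partial B_R$.
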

\begin{proof}
Set $g=\chi-f$. Then $g\ge0$ inside $B_R$, $g\le0$ outside, and $\int g=0$.
The convolution $K*f$ is radial and radially nonincreasing, so
\[
\int g(K*f)=\int g[(K*f)-(K*f)(Re_1)]\ge0.
\]
The same holds for $K*\chi$. By the symmetry of $K$, the difference of the two quadratic interactions equals $\int g[K*(\chi+f)]$, which proves both inequalities.
If $K*\chi$ is strictly radially decreasing, the integrand $g[(K*\chi)-(K*\chi)(Re_1)]$ is positive wherever $g\ne0$, except on the null set $\partial B_R$. Hence $\int g(K*\chi)>0$ unless $f=\chi$ almost everywhere, and equality of the two quadratic interactions forces $f=\chi$ almost everywhere.
Boundedness, integrability, $0\le f\le1$ and $f\in L^1$ justify every integral.
\end{proof}

\subsection{Tangent gaps and the central reduction}

For $a\ge0$ use the convention $\int_a^A=-\int_A^a$ and define
\begin{equation}\label{eq:Wt}
\mathcal G_t(a)=\int_a^A[u_t(r(v))-u_t(R)]\dd v
=J_t(A,A)-J_t(a,A)-u_t(R)(A-a),
\end{equation}
\begin{equation}\label{eq:ft}
\mathcal L_t(a)=2\int_a^A[\delta_t(r(v))-\delta_t(R)]\dd v
=U_t(A)+U_t'(A)(a-A)-U_t(a).
\end{equation}

Thus $\mathcal G_t(a)$ and $\mathcal L_t(a)$ are the amounts by which the concave
functions $J_t(\cdot,A)$ and $U_t$ lie below their tangent lines at $A$. Both are
nonnegative, and $\mathcal G_t(a)\le|A-a|$.
Their integrated forms are
\begin{equation}\label{eq:WF}
\mathcal G(a)=\int_0^\infty\mathcal G_t(a)\dd\nu_{\rm gap}(t),\qquad \mathcal L(a)=\int_0^\infty\mathcal L_t(a)\dd\nu_{\rm rem}(t).
\end{equation}

The gain $\mathcal G$ is bounded by the interaction deficit in Proposition~\ref{prop:reduction}, while $\mathcal L$ is the concavity loss. Both are finite:
\[
\mathcal G(a)\le\ell|A-a|,\qquad \mathcal L(a)\le b_{k,s}R^{k-s} +|a-A|\int_0^\infty U_t'(A)\dd\nu_{\rm rem}(t)<\infty.
\]

Indeed $\mathcal L_t(a)\le U_t(A)+U_t'(A)|a-A|$; \eqref{eq:P} bounds the first integral by $b_{k,s}R^{k-s}$, and \eqref{eq:dJ}--\eqref{eq:heatloss} bound the second by a multiple of the integrable function $e^{-\lambda t}t^{-(1+s)/2}$.

We now define the deficit that the reduction retains. For measurable
$\rho:\mathbb R^k\to[0,1]$ with $\int\rho=A$ let
\begin{equation}\label{eq:D}
D_t(\rho)=J_t(A,A)-\iint\rho\,G_t\,\rho,\qquad
D(\rho)=\int_0^\infty D_t(\rho)\,d\nu_{\rm gap}(t).
\end{equation}
Then,
\begin{equation}\label{eq:DK}
\begin{aligned}
D(\rho)&=\iint\bigl[\chi_{B_R}(z)\chi_{B_R}(w)-\rho(z)\rho(w)\bigr]
K_\lambda(|z-w|)\,dz\,dw,\\
K_\lambda(r)&=\int_0^\infty G_t(re_1)\,d\nu_{\rm gap}(t).
\end{aligned}
\end{equation}
The kernel $K_\lambda$ is integrable, with $\int_{\mathbb R^k}K_\lambda(|z|)\,dz
=\ell$; in particular $|D(\rho)|\le\ell A$.
\begin{proposition}\label{prop:reduction}
Let $E\subset N$ be measurable with $|E|=|M|A$, and let $a_E(x)=|E_x|$. Then
\[
\mathcal P_s(E)-\mathcal P_s(M\times B_R)\ge c_{m,k,s}|M|\left(D(\bar\rho_E)-\fint_M\mathcal L(a_E)\dd\mu\right).
\]

Moreover, for every $t>0$,
\[
D_t(\bar\rho_E)\ge\fint_M\mathcal G_t(a_E)\dd\mu\ge0,\quad
D(\bar\rho_E)\ge\fint_M\mathcal G(a_E)\dd\mu.
\]
\end{proposition}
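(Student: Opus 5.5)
We integrate Lemma~\ref{lem:spectral} against $\kappa_{n,s}t^{-1-s/2}\,dt$, subtract the cylinder identity, and organize the result into the deficit $D$ and the concavity loss. Then, separately, we prove the per-time gain inequality $D_t\ge\fint\mathcal G_t$ by Riesz rearrangement together with Lemma~\ref{lem:radialcomparison}.

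\emph{Step 1 (first inequality).} By Lemma~\ref{lem:normalization}(b) and \eqref{eq:P},
\[
\mathcal P_s(M\times B_R)=c_{m,k,s}|M|\int_0^\infty U_t(A)\,(d\nu_{\rm gap}+d\nu_{\rm rem}).
\]
Multiply \eqref{eq:spectral} by $\kappa_{n,s}t^{-1-s/2}=c_{m,k,s}\kappa_{k,s}t^{-1-s/2}$. The first term gives $c_{m,k,s}|M|\int (A-\iint\bar\rho_EG_t\bar\rho_E)\,d\nu_{\rm gap}$. Subtracting $U_t(A)=A-J_t(A,A)$ against $d\nu_{\rm gap}$, which has finite mass $\ell$, leaves exactly $c_{m,k,s}|M|D(\bar\rho_E)$.

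For the second term, Riesz \eqref{eq:riesz} applied to each slice gives
\[
|E_x|-\iint_{E_x\times E_x}G_t\ge U_t(a_E(x)).
\]
Since $\int_M a_E\,d\mu=|M|A$, we have $\fint_M U_t'(A)(a_E-A)\,d\mu=0$, so
\[
\fint_M U_t(a_E)\,d\mu-U_t(A)=-\fint_M\mathcal L_t(a_E)\,d\mu
\]
by \eqref{eq:ft}. Integrating against $d\nu_{\rm rem}$ and using Tonelli (everything is nonnegative) gives $-\fint\mathcal L(a_E)$.

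Care is needed because $\nu_{\rm rem}$ has infinite mass, so the $t$-integrals cannot be split naively. The remedy is to work with the nonnegative integrands $\mathcal L_t(a_E)$ and the finite quantity $U_t(A)$, whose integral is $b_{k,s}R^{k-s}$. If $\mathcal P_s(E)=\infty$ there is nothing to prove. Otherwise write
\[
\fint U_t(a_E)=U_t(A)-\fint\mathcal L_t(a_E),
\]
and note that each piece is integrable or has a sign. The identity then holds in $[-\infty,\infty)$ on the right, and the inequality follows.

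\emph{Step 2 (gain, fixed $t$).} Let
\[
f(z)=\fint_M\chi_{B_{r(a_E(x))}}(z)\,d\mu(x),
\]
which is radial, radially nonincreasing, takes values in $[0,1]$, and has $\int f=A$. Then
\[
\iint\bar\rho_EG_t\bar\rho_E=\fint\fint\int_{E_x}\int_{E_y}G_t\le\fint\fint J_t(a_E(x),a_E(y))=\iint fG_tf,
\]
by \eqref{eq:riesz} slice-pairwise. Hence $D_t(\bar\rho_E)\ge J_t(A,A)-\iint fG_tf$.

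Lemma~\ref{lem:radialcomparison} with $K=G_t$ bounds this from below by
\[
\int(\chi_{B_R}-f)(G_t*\chi_{B_R})=\int(\chi_{B_R}-f)\,u_t(|z|)\,dz.
\]
Since $\int(\chi_{B_R}-f)=0$, we may subtract the constant $u_t(R)$. By the definition of $f$ this equals
\[
\fint_M\Bigl(\int_{B_R}-\int_{B_{r(a_E)}}\Bigr)[u_t-u_t(R)]\,dz\,d\mu .
\]
Using \eqref{eq:dJ} in the form $\partial_pJ_t(p,A)=u_t(r(p))$, the inner bracket is exactly
\[
J_t(A,A)-J_t(a,A)-u_t(R)(A-a)=\mathcal G_t(a)\ge0 .
\]
Integrating against $d\nu_{\rm gap}$ (finite mass, with $D_t$ bounded by $A$) gives $D(\bar\rho_E)\ge\fint\mathcal G(a_E)$ by Tonelli.

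The main obstacle is Step~1's bookkeeping with the infinite-mass measure $\nu_{\rm rem}$. A secondary point is measurability in $x$ of the slice interactions, which holds because $a_E$ and the integrals are measurable by Fubini.
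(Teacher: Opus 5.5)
Your proposal is correct and follows essentially the paper's proof. For the perimeter bound, both arguments integrate the spectral inequality against the time weight and apply Riesz slice by slice to produce $U_t(a_E)$ and the tangent-gap loss $\mathcal L_t$. For the gain, both use the averaged-ball density $f$ with Riesz and Lemma~\ref{lem:radialcomparison}.

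The only difference is bookkeeping in the time integration. You integrate the $D_t$ term using $|D_t|\le A$ and the finite mass of $\nu_{\rm gap}$. The paper instead proves the gain bound first and then uses the sign $D_t\ge 0$. Both justifications are valid.
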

\begin{proof}
We first prove the bounds for $D_t$ and $D$. Let
\[
f(z):=\fint_M\chi_{B_{r(a_E(x))}}(z)\,\dd\mu(x).
\]
Then $f$ is measurable, radial and radially nonincreasing, $0\le f\le1$, and
$\int_{\mathbb R^k}f\,\dd z=A$. By the definition of $\bar\rho_E$ and the Riesz inequality \eqref{eq:riesz},
\begin{align*}
\iint\bar\rho_EG_t\bar\rho_E
&=\fint_M\fint_M\iint_{E_x\times E_y}G_t(z-w)\dd z\dd w\dd\mu(x)\dd\mu(y)\\
&\le\fint_M\fint_MJ_t(a_E(x),a_E(y))\dd\mu(x)\dd\mu(y)\\
&=\iint fG_tf.
\end{align*}
Lemma~\ref{lem:radialcomparison} with $K=G_t$ and this $f$ gives, with $\chi=\chi_{B_R}$,
\[
\begin{aligned}
J_t(A,A)-\iint fG_tf&\ge\int(\chi-f)(G_t*\chi)\dd z\\
&=\fint_M[J_t(A,A)-J_t(a_E(x),A)]\dd\mu(x)\\
&=\fint_M\mathcal G_t(a_E)\dd\mu,
\end{aligned}
\]
where the last identity uses \eqref{eq:Wt} and $\fint_M(A-a_E)\dd\mu=0$. Integrating in $t$ gives the bound for $D$.

For the perimeter bound we may assume $\cP_s(E)<\infty$. By \eqref{eq:riesz} with $F_1=F_2=E_x$, the last bracket in \eqref{eq:spectral} is at least $U_t(a_E(x))$, and by \eqref{eq:ft} and $\fint_M(a_E-A)\dd\mu=0$ we have 
\[
\fint_MU_t(a_E)\dd\mu=U_t(A)-\fint_M\mathcal L_t(a_E)\dd\mu
.
\]
 Hence \eqref{eq:spectral} gives
\[
\frac{H_t(E)}{|M|}\ge U_t(A)+\big(1-e^{-\lambda t}\big)D_t(\bar\rho_E)-e^{-\lambda t}\fint_M\mathcal L_t(a_E)\dd\mu .
\]
Multiply by $\kappa_{n,s}t^{-1-s/2}=c_{m,k,s}\kappa_{k,s}\,t^{-1-s/2}$ and integrate. By \eqref{eq:P}, multiplied by $c_{m,k,s}$, and \eqref{eq:cylinder} the first term gives $\cP_s(M\times B_R)/|M|$. The other two terms have constant signs, and 
\[
\int_0^\infty\!\fint_M\mathcal L_t(a_E)\dd\mu\dd\nu_{\rm rem}(t)=\fint_M\mathcal L(a_E)\dd\mu
\le b_{k,s}R^{k-s}<\infty
\]
 because $\fint_M \mathcal L_t(a_E)\dd\mu=U_t(A)-\fint_M U_t(a_E)\dd\mu\le U_t(A)$. This proves the perimeter bound.
\end{proof}

Proposition~\ref{prop:reduction} reduces the inequality of Theorem~\ref{thm:A}
to a statement about functions of one variable. Indeed, since
$D(\bar\rho_E)\ge\fint_M\mathcal G(a_E)\dd\mu$, the perimeter deficit of $E$ is at
least $c_{m,k,s}\int_M[\mathcal G(a_E)-\mathcal L(a_E)]\dd\mu$, which is
nonnegative as soon as $\mathcal G\ge\mathcal L$ on $[0,\infty)$.
Section~\ref{sec:scalar} proves this inequality, and the stronger inequality
$\mathcal G\ge2\mathcal L$ needed for Theorem~\ref{thm:B}, under the
corresponding lower bounds on $R\sqrt\lambda$.

\section{The one-variable gain-versus-loss comparison}\label{sec:scalar}
The gain-versus-loss comparison of Proposition~\ref{prop:scalar} absorbs the concavity loss at every nonnegative slice volume.
\subsection{Integral representations and thresholds}
Define the integrated ball potential $V(r)$ for $r\ge0$ using the gap measure and the curvature $h(r)$ for $r>0$ using the remainder measure by
\[
V(r)=\int_0^\infty u_t(r)\dd\nu_{\rm gap}(t),\qquad h(r)=2\int_0^\infty\delta_t(r)\dd\nu_{\rm rem}(t).
\]
$V$ is the potential of the ball $B_R$ for the kernel $K_\lambda$, and $h$ is the nonlocal mean curvature of $\partial B_r$ for the complementary kernel $|z|^{-k-s}-K_\lambda(|z|)$. The ball energy associated with the remainder measure is the heat loss $\int_0^\infty U_t(A)\dd\nu_{\rm rem}(t)$.

Insert \eqref{eq:Wt}, \eqref{eq:ft} into \eqref{eq:WF}; the integrands have a fixed sign on each side of $A$. For every $a\ge0$,
\begin{equation}\label{eq:WFint}
\mathcal G(a)=\int_a^A[V(r(v))-V(R)]\dd v,\qquad \mathcal L(a)=\int_a^A[h(r(v))-h(R)]\dd v.
\end{equation}
\begin{definition}\label{def:rho}
Let $\theta\in\{1,2\}$. For $x>0$ consider condition $(\mathrm I_\theta)$:
\begin{equation*}\tag{$\mathrm I_\theta$}
\frac{2\kappa_{k,s}\Gamma(1-s/2)}s x^s
\ge\left(1+\frac sk\right)\frac{b_{k,s}}{\omega_k}
+\frac{2(\theta-1)k\kappa_{k,s}\Gamma((1-s)/2)}{\sqrt\pi}
 x^{s-1}.
\end{equation*}

The dimension-dependent condition $(\mathrm{II}_\theta)$ is
\begin{align*}
k=1:&\quad x\ge\ln(1+2\sqrt\theta);
\\
k=2:&\quad
\frac{2\kappa_{2,s}\Gamma(1-s/2)}s x^s
\ge(1-s/2)\frac{b_{2,s}}{\omega_2}+\omega_2\ln3
+\frac{4\theta\kappa_{2,s}\Gamma((1-s)/2)}{\sqrt\pi}x^{s-1};
\\
k\ge3:&\quad
\frac{2\kappa_{k,s}\Gamma(1-s/2)}s x^s
\ge\left(1-\frac sk\right)\frac{b_{k,s}}{\omega_k}
+\frac{(2\theta-1)(k-1)\kappa_{k,s}\Gamma((1-s)/2)}{\sqrt\pi}x^{s-1}.
\end{align*}
Let $T_{k,s}(\theta)$ be the smallest $x_0>0$ for which both
conditions hold for all $x\ge x_0$.
\end{definition}
In each condition, the difference of the two sides is continuous and
strictly increasing in $x$, negative as $x\downarrow0$, and tends to
$+\infty$ as $x\to\infty$. Hence each condition holds exactly on a
closed half-line, and $T_{k,s}(\theta)$ is well defined, finite and positive.
Since the right-hand sides are nondecreasing in $\theta$,
$T_{k,s}(2)\ge T_{k,s}(1)>0$.
\begin{proposition}\label{prop:scalar}
Let $\theta\in\{1,2\}$ and $R\sqrt{\lambda}\ge T_{k,s}(\theta)$. Then $\mathcal G\ge\theta \mathcal L$ on $[0,\infty)$, and $\mathcal G-\theta \mathcal L>0$ on $(0,A)\cup(A,\infty)$. If $\theta=1$, then also $(\mathcal G-\mathcal L)(0)>0$.
\end{proposition}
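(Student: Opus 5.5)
Setting $\Phi(v)=[V(r(v))-V(R)]-\theta[h(r(v))-h(R)]$, the identity \eqref{eq:WFint} gives $(\mathcal G-\theta\mathcal L)(a)=\int_a^A\Phi(v)\,dv$. I will therefore prove the sign pattern $\Phi>0$ on $(0,A)$ and $\Phi<0$ on $(A,\infty)$, on each side of $A$. This forces $\mathcal G-\theta\mathcal L$ to be strictly decreasing on $[0,A]$ and strictly increasing on $[A,\infty)$. Since it vanishes at $A$, it is then nonnegative, and positive away from $A$. Positivity at $a=0$ for $\theta=1$ is then automatic, since $0$ lies in the decreasing branch. If the sign pattern fails near $v\downarrow0$, where $h\to+\infty$, I will instead check $(\mathcal G-\mathcal L)(0)>0$ directly, as explained below. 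In the variable $r$, the required sign pattern reads $V(r)-V(R)$ versus $\theta(h(r)-h(R))$, and both sides vanish at $r=R$.

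\textbf{Step 1: properties of $V$.} I will write $K_\lambda$ as a superposition of resolvent kernels. The weight $1-e^{-\lambda t}=\int_0^\lambda te^{-\mu t}d\mu$ gives $d\nu_{\rm gap}=\kappa_{k,s}\int_0^\lambda e^{-\mu t}t^{-s/2}\,d\mu\,dt$. This exhibits $K_\lambda$ as a positive combination of kernels $\int_0^\infty G_t e^{-\mu t}t^{-s/2}dt$, which are Bessel-type and radially decreasing. From this I will show $V$ is strictly decreasing. I will also bound its slope near $R$ from below: $-V'(R)\gtrsim \ell\cdot(\text{something})$, with $\ell\sim\lambda^{s/2}$ from \eqref{eq:gapmass}. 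The key quantitative input is that $V(0)-V(R)$ is a definite fraction of $\ell$ once $R\sqrt\lambda$ is large, since the mass of $K_\lambda$ concentrates at scale $\lambda^{-1/2}\ll R$. This should produce the left sides $\frac{2\kappa_{k,s}\Gamma(1-s/2)}{s}x^s$ of $(\mathrm I_\theta)$ and $(\mathrm{II}_\theta)$ with $x=R\sqrt\lambda$, after normalizing by $R^{-s}$.

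\textbf{Step 2: bounds on $h$.} Since $h=2\int\delta_t\,d\nu_{\rm rem}$ is decreasing, the loss $h(r)-h(R)$ has the correct sign on each side. I need upper bounds. For $r\ge R$ I will use $|h(r)-h(R)|\le h(R)$, and bound $h(R)$ via \eqref{eq:heatloss}, which gives $h(R)\le kR^{-1}\eta$ with $\eta$ from \eqref{eq:eta}; this is the $x^{s-1}$ term. Near $r\downarrow 0$ the bound \eqref{eq:heatloss} is useless, since $h(r)\to\infty$. Here I will instead compare with the full Euclidean curvature $h_{\rm eucl}(r)\propto b_{k,s}r^{-s}$, obtained by differentiating \eqref{eq:P}. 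Integrating against $dv$ then gives terms like $(1\pm s/k)b_{k,s}R^{k-s}/\omega_k$, which is the origin of the constant terms in Definition~\ref{def:rho}.

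\textbf{Step 3: the two regimes.} I will treat $a\le A$ and $a\ge A$ separately. I expect condition $(\mathrm I_\theta)$ to handle the integrated comparison from $a=0$: there $\mathcal G(0)=\int_0^A[V(r)-V(R)]$ is compared with $\theta\mathcal L(0)$, using \eqref{eq:P} to bound $\mathcal L(0)\le U(A)+AU'(A)$-type quantities. Condition $(\mathrm{II}_\theta)$ will handle the tail $a\to\infty$ and the region near $A$. For the tail, $\mathcal G(a)\approx(a-A)V(R)$-type growth must beat $\theta h(R)(a-A)$. Near $A$, the dimension-specific geometry of $u_t$ enters; for $k=1$ this is explicit, producing $\ln(1+2\sqrt\theta)$.

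\textbf{Main obstacle.} The hardest part is the intermediate range $r\in(0,R)$, where pointwise monotonicity of $\Phi$ is not clear. My fallback there is to use a convexity property of $V$ in the volume variable (Step 1) to reduce the comparison to the endpoints $0$ and $A$, where $(\mathrm I_\theta)$ and $(\mathrm{II}_\theta)$ apply. The loss $\mathcal L$ is dominated by the concave ball energy through \eqref{eq:P}.
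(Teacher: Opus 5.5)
Your target sign pattern on $(0,A)$ never holds, and your fallback lacks the structural fact that makes an endpoint reduction work. Since $0\le V\le\ell$ while $h(r)\to+\infty$ as $r\downarrow0$ (Lemma~\ref{lem:h}(i)), your $\Phi(v)\to-\infty$ as $v\downarrow0$ for every $R$ and $\lambda$. So $\mathcal G-\theta\mathcal L$ is always strictly \emph{increasing} near $a=0$. It is never decreasing on all of $[0,A]$, and the remark that positivity at $0$ is then automatic rests on a false premise.

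The actual shape on $[0,A]$ is increase-then-decrease. Your idea of reducing to the endpoints is the right one. But $(\mathcal G-\theta\mathcal L)(0)\ge0$ and $(\mathcal G-\theta\mathcal L)(A)=0$ give positivity on $(0,A)$ only once you know that $q_\theta(r)=\theta[h(r)-h(R)]-[V(r)-V(R)]$ changes sign at most once on $(0,R)$. The paper gets this from strict convexity of $q_\theta$ on $(0,R]$, i.e.\ from $q_\theta'=-V'+\theta h'$ being strictly increasing. That needs two facts: $-V'$ is nondecreasing on $(0,R]$ ($V$ is concave inside the ball, in the radius variable), and $-h'$ is strictly decreasing. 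A ``convexity property of $V$'' alone cannot supply this, and the convexity of $h$ never appears in your outline.

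Your Bessel-type superposition also yields only radial monotonicity of $V$. The interior concavity comes from writing $(1-e^{-\lambda t})t^{-1-s/2}$ as a positive Laplace transform. Then $V$ is a positive superposition of resolvent potentials $v_q$ with $(-\Delta+q)v_q=\chi_{B_R}$, and inside $B_R$ the function $q^{-1}-v_q$ is a positive multiple of the convex radial solution $\phi_q$ of $\Delta w=qw$.

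Finally, the value at $a=0$ does not come from an estimate like ``$V(0)-V(R)$ is a definite fraction of $\ell$''. It comes from the exact identity \eqref{eq:VR}, which follows from the scaling identity $2\kappa_{k,s}\int_0^\infty\delta_t(r)t^{-1-s/2}\,dt=(1-s/k)(b_{k,s}/\omega_k)r^{-s}$. That identity yields the endpoint formula \eqref{eq:endpoint}, which is exactly where $(\mathrm I_\theta)$ comes from.

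On $(A,\infty)$ your target, $q_\theta>0$ on $(R,\infty)$, is the right one. However, you only address $r\to\infty$, where $q_\theta\to V(R)-\theta h(R)$. The bound $h(R)-h(r)\le h(R)$ is useless near $r=R$, where $V(R)-V(r)\to0$, and nothing in your outline controls the intermediate range. The paper again needs a single-crossing mechanism, different in each dimension:
\begin{enumerate}
\item[($k\ge3$)] $r^{k-1}(-V')$ is nonincreasing on $[R,\infty)$, from $(r^{k-1}(-v_q'))'=-qr^{k-1}v_q<0$, while $r^{k-1}(-h')$ is strictly increasing, from the chord formula \eqref{eq:chord}. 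So $q_\theta$ rises then falls on $[R,\infty)$, and everything reduces to $V(R)\ge\theta h(R)$. Condition $(\mathrm{II}_\theta)$ yields this through \eqref{eq:VR} and the sharper bound $h(R)\le(k-1)\eta/R$; $(\mathrm{II}_\theta)$ is calibrated to that bound, not to your $k\eta/R$.
\item[($k=2$)] The argument uses convexity of $V$ on $[R,2R]$, the cone bound \eqref{eq:cone} on $V(2R)$, and $-h'\le2\eta r^{-2}$.
\item[($k=1$)] The argument is a pointwise comparison of the time integrands of $-V'+\theta h'$, using convexity of $\xi\mapsto\ln(e^{e^\xi}-1)$. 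This is what produces $\ln(1+2\sqrt\theta)$.
\end{enumerate}
Without these monotonicity and convexity properties of $V$ and $h$, neither half of the sign pattern is established.
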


The proposition is proved at the end of this section. Lemma~\ref{lem:V} gives
the convexity properties of $V$ and Lemma~\ref{lem:h} the bounds on $h$;
Lemma~\ref{lem:formulas} links the two at $r=R$. The range $a<A$ then reduces to
the sign of $\mathcal G-\theta\mathcal L$ at $a=0$ (Lemma~\ref{lem:inside}),
which is condition $(\mathrm I_\theta)$. The range $a>A$ is treated separately
for $k=1$, $k=2$ and $k\ge3$ (Lemma~\ref{lem:outside}), which gives the three
forms of $(\mathrm{II}_\theta)$.
\subsection{Ball potential and cone bound}
\begin{lemma}\label{lem:V}
The following properties hold.
\begin{enumerate}
\item[(i)] The function $V$ satisfies $0\le V\le\ell$, is nonincreasing on $[0,\infty)$, and tends to zero at infinity.
\item[(ii)] The function $V$ is of class $C^1$ on $(0,\infty)$; $-V'\ge0$ is nondecreasing on $(0,R]$ and $r\mapsto r^{k-1}(-V'(r))$ is nonincreasing on $[R,\infty)$. In particular, $V$ is concave on $[0,R]$ and convex on $[R,\infty)$.
\item[(iii)] If $k=1$, then $-V'(r)=\int_0^\infty [G^{(1)}_t(r-R)-G^{(1)}_t(r+R)]\dd\nu_{\rm gap}(t)$.
\item[(iv)] Moreover,
\begin{equation}\label{eq:cone}
V(2R)\le k2^{-k}\omega_k\ln3\,R^{-s}.
\end{equation}
\end{enumerate}
\end{lemma}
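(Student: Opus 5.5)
The plan is to exploit that $V=K_\lambda*\chi_{B_R}$ evaluated at $re_1$, with $K_\lambda(|z|)=\int_0^\infty G_t(z)\,d\nu_{\rm gap}(t)$, and to treat (i), (iii), (iv) by direct estimates. For (ii) we write $K_\lambda$ as a positive superposition of resolvent kernels and verify the monotonicity properties for each resolvent potential by a radial ODE argument.

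For (i), note that $0\le u_t\le\int G_t=1$. Also $u_t$ is nonincreasing by Lemma~\ref{lem:ballfunctions}, and $u_t(r)\to0$ as $r\to\infty$ for each $t$. Since $\nu_{\rm gap}$ has finite mass $\ell$ by \eqref{eq:gapmass}, integrating gives $0\le V\le\ell$ and monotonicity. Dominated convergence (dominating function $1$) gives $V(r)\to0$.

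For (iii), when $k=1$ we have $u_t(r)=\int_{r-R}^{r+R}G^{(1)}_t$. Hence $-u_t'(r)=G^{(1)}_t(r-R)-G^{(1)}_t(r+R)\in[0,(4\pi t)^{-1/2}]$. Near $t=0$ we have $d\nu_{\rm gap}\le\kappa_{1,s}\lambda t^{-s/2}\,dt$, so $t^{-1/2}$ is $\nu_{\rm gap}$-integrable near $0$; for large $t$ the bound is trivially integrable. This uniform domination lets us differentiate under the integral and also gives continuity of $-V'$, so (iii) and the $C^1$ claim for $k=1$ follow.

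For (ii) in general, I use the elementary identities
\[
1-e^{-\lambda t}=\int_0^\lambda te^{-\mu t}\,d\mu,\qquad t^{-s/2}=\frac1{\Gamma(s/2)}\int_0^\infty\sigma^{s/2-1}e^{-\sigma t}\,d\sigma.
\]
Together they give
\[
d\nu_{\rm gap}(t)=c\int_0^\lambda\!\int_0^\infty\sigma^{s/2-1}e^{-(\mu+\sigma)t}\,d\sigma\,d\mu\,dt
\]
with $c>0$. Consequently $V=c\int\!\int\sigma^{s/2-1}w_{\mu+\sigma}\,d\sigma\,d\mu$, where $w_\beta=\int_0^\infty e^{-\beta t}G_t*\chi_{B_R}\,dt$ solves $(\beta-\Delta)w_\beta=\chi_{B_R}$ and satisfies $0<\beta w_\beta<1$. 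For the radial profile we have $(r^{k-1}w_\beta')'=r^{k-1}(\beta w_\beta-\chi_{B_R})$.

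On $r>R$ the right side is positive, so $r^{k-1}(-w_\beta')$ is nonincreasing. On $r<R$ the right side is $r^{k-1}f$ with $f=\beta w_\beta-1\le0$, and $f$ is radially nonincreasing since $w_\beta$ is. This gives $-w_\beta'(r)=\tfrac rk\fint_{B_r}(-f)$, and $-f$ is nonnegative and nondecreasing in $|z|$. The ball average of a nondecreasing radial function is nondecreasing in $r$, and so is the factor $r$; hence $-w_\beta'$ is nondecreasing on $(0,R]$.

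These properties are stable under positive superposition. The only technical point, and the step I expect to be the main obstacle, is justifying differentiation under the $(\mu,\sigma)$-integral and continuity of $V'$. I would handle it through the Gaussian form: $-u_t'(r)=\int_{\partial B_R}G_t(re_1-w)\,\nu_1\,dS$-type bounds give $|u_t'|\lesssim t^{-1/2}$, locally uniformly away from blow-up, which is $\nu_{\rm gap}$-integrable exactly as in (iii). Concavity on $[0,R]$ and convexity on $[R,\infty)$ then follow, since $r^{k-1}$ is nondecreasing there.

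For (iv), since $1-e^{-\lambda t}\le1$, \eqref{eq:subordination} gives $K_\lambda(\rho)\le\rho^{-k-s}$. Hence
\[
V(2R)\le\int_{B_R(2Re_1)}|z|^{-k-s}\,dz\le R^{-s}\int_{B_R(2Re_1)}|z|^{-k}\,dz,
\]
using $|z|\ge R$ on this ball. In polar coordinates the ball lies in $R<|z|<3R$, and its directions lie in the spherical cap of angular radius $\pi/6$. The radial integral therefore gives at most $\ln3$ times the cap area.

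It remains to bound the cap area by $k2^{-k}\omega_k$, the area of a hemisphere of radius $1/2$. The cap has the same boundary $(k-2)$-sphere of radius $1/2$ as that hemisphere. Both are radial graphs over the disk of radius $1/2$, with slopes $\rho/\sqrt{1-\rho^2}\le\rho/\sqrt{1/4-\rho^2}$. The area integrands compare pointwise, which gives \eqref{eq:cone}.
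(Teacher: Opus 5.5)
Your proposal is correct and is essentially the paper's proof. Your $(\mu,\sigma)$ double integral is the unfolded form of the paper's resolvent density $m_\lambda(q)$ (put $q=\mu+\sigma$ and integrate out $\mu$), and the exterior step $(r^{k-1}w_\beta')'=\beta r^{k-1}w_\beta>0$, the $k=1$ formula, and the cone estimate $V(2R)\le|C|\,R^{-s}\ln3$ with $|C|\le k2^{-k}\omega_k$ all match. Your local variations are all valid: the identity $-w_\beta'(r)=\frac rk\fint_{B_r}(1-\beta w_\beta)$ combined with radial monotonicity of $w_\beta$ replaces the explicit convex solution $\phi_q$, the hemisphere-graph comparison replaces the $\frac12\sin^{k-1}\varphi$ cap bound, and dominated convergence gives $V(r)\to0$. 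Your differentiation step also closes cleanly with the uniform bound $|u_t'(r)|\le\|\partial_{e_1}G_t\|_1=(\pi t)^{-1/2}$, which is $\nu_{\rm gap}$-integrable.
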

\begin{proof}
$0\le u_t\le1$ and \eqref{eq:gapmass} give $0\le V\le\ell$; monotonicity is inherited from $u_t$, and 
\[
V(r)\le\int_{B_R}|r e_1-w|^{-k-s}\dd w\to0
\]
 because $K_\lambda(|z|)\le|z|^{-k-s}$ by \eqref{eq:subordination}.

Two finite Gamma integrals yield the positive Laplace-transform representation
\[
(1-e^{-\lambda t})t^{-1-s/2}=\int_0^\infty e^{-qt}m_\lambda(q)\dd q,\qquad m_\lambda(q)=\frac{q^{s/2}-(q-\lambda)_+^{s/2}}{\Gamma(1+s/2)}>0.
\]

Let $v_q=\int_0^\infty e^{-qt}(G_t*\chi_{B_R})\dd t$. The resolvent identity, positivity, and Tonelli's theorem give
\[
(-\Delta+q)v_q=\chi_{B_R},\qquad 0<v_q<q^{-1},\qquad
V=\kappa_{k,s}\int_0^\infty m_\lambda(q)v_q\dd q.
\]

The radial potential is nonincreasing. The identity $\|\partial_{e_1}G_t\|_1=(\pi t)^{-1/2}$ gives $\|\partial_{e_1}v_q\|_\infty\le q^{-1/2}$. As $q\downarrow0$, $m_\lambda(q)=O(q^{s/2})$; as $q\to\infty$, $m_\lambda(q)=O(q^{s/2-1})$. Thus both $m_\lambda/q$ and $m_\lambda/\sqrt q$ are integrable, the latter because $s/2<1/2$. Absolute domination justifies first differentiation under the integral and $C^1$ matching at $R$.
Inside $B_R$ the function $q^{-1}-v_q$ is radial, smooth and positive, and it
solves $\Delta w=qw$. The radial solutions of this equation that are smooth at
the origin (even, if $k=1$) are the multiples of
\[
\phi_q(r)=\frac1{|S^{k-1}|}\int_{S^{k-1}}e^{\sqrt q\,r\,(e_1\cdot\omega)}\,d\sigma(\omega),
\]
by uniqueness for the radial equation with data $w(0)=1$, $w'(0)=0$.
Indeed, integrating $r^{k-1}w'=q\int_0^r\tau^{k-1}w(\tau)\dd\tau$
gives a Volterra equation near zero and then ordinary ODE uniqueness
away from zero; for $k=1$ the solution is even. Since
$\phi_q(0)=1$ and
$\phi_q''(r)=q\,|S^{k-1}|^{-1}\int_{S^{k-1}}(e_1\cdot\omega)^2
e^{\sqrt q\,r\,(e_1\cdot\omega)}\,d\sigma(\omega)>0$, we get
$q^{-1}-v_q=(q^{-1}-v_q(0))\phi_q$ in $B_R$, and
$-v_q'=(q^{-1}-v_q(0))\phi_q'$ is increasing on $(0,R]$.
Outside,
\[
(r^{k-1}(-v_q'))'=-qr^{k-1}v_q<0.
\]

Integrating these two monotonicity statements against the positive weight $\kappa_{k,s}m_\lambda(q)\dd q$ gives the assertions on $-V'$; they extend to $r=R$ by continuity of $V'$. Concavity and convexity follow from these first-derivative comparisons. For $k=1$, differentiate the interval
convolution directly.

For $k=1$, the bound follows directly from $V(2R)\le\int_R^{3R}u^{-1-s}\dd u\le R^{-s}\ln3$. For $k\ge2$, viewed from $2Re_1$, the ball $B_R$ lies in the cone over the spherical cap $C\subset S^{k-1}$ of angular radius $\pi/6$ around $-e_1$, at distances between $R$ and $3R$. The normalized measure of a cap of angular radius $\varphi\le\pi/2$ is at most $\frac12\sin^{k-1}\varphi$ (substitute $\sin\vartheta=\sin\varphi\,\sin\vartheta'$ in $\int_0^\varphi\sin^{k-2}\vartheta\dd\vartheta$), so $|C|\le k\omega_k2^{-k}$.
 Since $K_\lambda(|z|)\le|z|^{-k-s}$,
\[
V(2R)\le|C|\int_R^{3R}r^{-1-s}\dd r=|C|\,R^{-s}\,\frac{1-3^{-s}}{s}\le k\omega_k2^{-k}\ln3\;R^{-s}.
\qedhere
\]
\end{proof}
\subsection{Curvature and endpoint identities}
\begin{lemma}\label{lem:h}
The following properties hold.
\begin{enumerate}
\item[(i)] The function $h$ is finite, positive and decreasing on $(0,\infty)$, tends to $+\infty$ as $r\downarrow0$, and satisfies
\begin{equation}\label{eq:hbound}
\begin{aligned}
h(r)&\le k\eta r^{-1},\\
Ah(R)&\le\int_0^\infty U_t(A)\dd\nu_{\rm rem}(t)
\le A R^{-1} k\eta .
\end{aligned}
\end{equation}
\item[(ii)] It is of class $C^1$ on $(0,\infty)$, and $-h'$ is positive and strictly decreasing.
\item[(iii)] If $k=1$, then
\[
-h'(r)=4\int_0^\infty G_t^{(1)}(2r)\dd\nu_{\rm rem}(t).
\]
\item[(iv)] If $k=2$, then
\begin{equation}\label{eq:curvature2}
-h'(r)\le 2\eta r^{-2},\quad
h(r)\le  2\eta r^{-1};
\end{equation}
if $k\ge3$, then
\begin{equation}\label{eq:curvaturehigh}
-h'(r)\le (k-1)\eta r^{-2},\quad
h(r)\le  (k-1)\eta r^{-1}.
\end{equation}
For $k\ge3$, $r\mapsto r^{k-1}(-h'(r))$ is strictly increasing.
\end{enumerate}
\end{lemma}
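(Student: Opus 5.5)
The plan is to work throughout with the pointwise-in-$t$ function $\delta_t$ and to integrate against $2\,d\nu_{\rm rem}$ only at the end. Tonelli justifies exchanging derivatives and integrals, using the domination we establish along the way.

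\emph{Step 1: finiteness, positivity, monotonicity and the bounds \eqref{eq:hbound}.} By Lemma~\ref{lem:ballfunctions}, each $\delta_t$ is positive and decreasing, so $h$ is positive and nonincreasing. The bound $\delta_t(r)\le \frac{k}{2\sqrt\pi}\sqrt t/r$ from \eqref{eq:heatloss} gives
\[
h(r)\le k r^{-1}\int_0^\infty\sqrt{t/\pi}\,d\nu_{\rm rem}=k\eta r^{-1}
\]
by \eqref{eq:eta}. Strict decrease follows from strict decrease of $\delta_t$. Since $\delta_t(r)\uparrow\frac12$ and $\nu_{\rm rem}$ has infinite mass, monotone convergence gives $h(r)\to\infty$ as $r\downarrow0$. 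For the second line of \eqref{eq:hbound}, I use concavity of $U_t$ with $U_t(0)=0$. This gives $AU_t'(A)\le U_t(A)$, that is, $2A\delta_t(R)\le U_t(A)$. Integrating yields $Ah(R)\le\int U_t(A)\,d\nu_{\rm rem}$. Then \eqref{eq:heatloss} bounds $U_t(A)\le\sqrt{t/\pi}\,k\omega_kR^{k-1}=\sqrt{t/\pi}\,kA/R$, which gives $AR^{-1}k\eta$.

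\emph{Step 2: a formula for $\delta_t'$.} Write $\delta_t(r)=\frac12-\int_{B_r(-re_1)}G_t$. Differentiating via the moving domain $\{|\zeta+re_1|<r\}$ gives a boundary integral over the sphere through the origin. After scaling $\zeta=r\xi$, this becomes
\[
-\delta_t'(r)=\int_{\partial B_1(-e_1)}(\text{normal velocity})\,G_t(r\xi)\,r^{k-1}\,d\sigma(\xi).
\]
The normal velocity of the boundary point $-e_1+\omega$ is $1-\omega\cdot e_1$ (take $\omega\cdot(-e_1)+1$ with the correct sign), which is nonnegative. Since $|\xi|^2=2(1-\omega\cdot e_1)$, this yields
\[
-\delta_t'(r)=r^{k-1}\int_{S^{k-1}}\tfrac12|\xi|^2G_t(r\xi)\,d\sigma(\omega),
\]
which is a positive integral of Gaussians evaluated at $r|\xi|$. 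For $k=1$ the boundary consists of the two points $\xi=0$ (velocity $0$) and $\xi=-2$ (velocity $2$), giving $-\delta_t'(r)=2G^{(1)}_t(2r)$. This yields (iii) after multiplying by $2$ and integrating.

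\emph{Step 3: the $C^1$ property and strict decrease of $-h'$.} After integrating in $t$, the expression $\int G_t(r\xi)\,d\nu_{\rm rem}$ is a function $\Phi(r|\xi|)$ that is radially decreasing and comparable to $(r|\xi|)^{-k-s}$. Hence
\[
-h'(r)=r^{k-1}\int_{S^{k-1}}|\xi|^2\,\Phi(r|\xi|)\,d\sigma.
\]
Near $\omega=e_1$ we have $|\xi|\to0$, and the integrand behaves like $|\xi|^{2-k-s}$ against a $(k-1)$-dimensional measure near a point, which is integrable because $1-s>0$. This gives local uniform domination and continuity, so $h\in C^1$. Strict monotonicity of $-h'$ is the part I expect to be the main obstacle, since the prefactor $r^{k-1}$ increases. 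My approach is to pass to the intrinsic variable $\zeta=r\xi$, which ranges over the sphere $\partial B_r(-re_1)$. Then
\[
-h'(r)=\int_{\partial B_r(-re_1)}\frac{|\zeta|^2}{2r^2}\,\Phi(|\zeta|)\,d\sigma(\zeta),
\]
which is the derivative of the weighted mass of the complementary kernel outside a ball through $0$. As $r$ grows, these balls increase. I will try to rewrite this as a derivative in $r$ of an integral over the region between consecutive balls and show that it is decreasing. For $k=1$ this is immediate from (iii), since $G_t(2r)$ decreases. For general $k$, I will parametrize by $\cos\vartheta=-\omega\cdot e_1$, so that $|\zeta|=2r\sin(\vartheta/2)$. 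The expression then becomes $c\int_0^\pi \sin^{k-2}\vartheta\,(2\sin\tfrac\vartheta2)^2\,r^{k-1}\Phi(2r\sin\tfrac\vartheta2)\,d\vartheta$. Substituting $\rho=2r\sin(\vartheta/2)$ converts this to a one-dimensional integral in $\rho\in(0,2r)$ whose weight, after accounting for the factors of $r$, decreases in $r$ for each fixed $\rho$, while the extra range near $\rho=2r$ contributes terms controlled by the decay of $\Phi$. Checking this explicitly is where I expect the real work to lie, including the statement in (iv) that $r^{k-1}(-h'(r))$ increases for $k\ge3$, which should follow from the same substitution.

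\emph{Step 4: the bounds in (iv).} In the $\rho$ variable, I bound $\Phi$ by the Gaussian moment. Since $\int_{S^{k-1}}\frac12|\xi|^2G_t(r\xi)\,d\sigma\cdot r^{k-1}$ integrated against $\nu_{\rm rem}$ is dominated via $\int\sqrt{t/\pi}$, I expect $-h'(r)\le c_k\eta r^{-2}$ with $c_2=2$ and $c_k=k-1$. The bound on $h$ then follows by integrating from $r$ to $\infty$, using $h(\infty)=0$, which holds by Step 1.
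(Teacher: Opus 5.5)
Your Steps 1 and 2, formula (iii), and the $C^1$ argument via Tonelli are sound and agree with the paper. The genuine gap is the strict decrease of $-h'$ in (ii) for $k\ge2$, and the route you sketch for it does not work.

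After the substitution $\rho=2r\sin(\vartheta/2)$, the function $\Phi(\rho)=\int_0^\infty G_t(\rho e_1)\,d\nu_{\rm rem}(t)$ no longer depends on $r$, so all the $r$-dependence is geometric:
\[
-h'(r)=\frac{|S^{k-2}|}{r^{2}}\int_0^{2r}\rho^{k}\,\Phi(\rho)\Bigl(1-\frac{\rho^2}{4r^2}\Bigr)^{\frac{k-3}{2}}\,d\rho .
\]
The weight $r^{-2}(1-\rho^2/(4r^2))^{(k-3)/2}$ has logarithmic $r$-derivative $-2/r+(k-3)\rho^2/\bigl(r(4r^2-\rho^2)\bigr)$. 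This is positive for $\rho>2r\sqrt{2/(k-1)}$, so your claimed pointwise decrease fails for $k\ge4$. For $k=3$ the moving endpoint adds a positive boundary term. In this form the sign of the derivative is simply not visible.

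The missing idea is to stay on the fixed sphere, where you already had $-h'(r)=r^{k-1}\int_{S^{k-1}}|\xi|^2\Phi(r|\xi|)\,d\sigma(\omega)$. There you should factor out the exact homogeneity of the kernel, not merely its comparability with $\rho^{-k-s}$. Substituting $t=\rho^2\tau$ gives $\Phi(\rho)=\rho^{-k-s}\phi(\rho)$, where $\phi(\rho)=\kappa_{k,s}\int_0^\infty e^{-\lambda\rho^2\tau}(4\pi\tau)^{-k/2}e^{-1/(4\tau)}\tau^{-1-s/2}\,d\tau$ is strictly decreasing. Hence
\[
-h'(r)=r^{-1-s}\int_{S^{k-1}}|\xi|^{2-k-s}\,\phi(r|\xi|)\,d\sigma(\omega),
\]
and every $r$-dependent factor decreases: the growth $r^{k-1}$ of surface measure is absorbed by $r^{-k-s}$. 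This is the paper's argument. The chord form is nonetheless the right tool for the last claim of (iv). For $k\ge3$, $r^{k-1}(-h'(r))=|S^{k-2}|\int_0^{2r}\rho^k\Phi(\rho)(r^2-\rho^2/4)^{(k-3)/2}\,d\rho$ has an integrand nondecreasing in $r$ and a strictly expanding range.

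The bounds in (iv) are only announced. For $k\ge3$ you must bound $-\partial_r\delta_t(r)$ pointwise in $t$. Drop the factor $(1-\rho^2/(4r^2))^{(k-3)/2}\le1$, extend the integral to $\rho\in(0,\infty)$, and recognize the second moment of the Gaussian on $\mathbb R^{k-1}$. This gives $-\partial_r\delta_t(r)\le\frac{k-1}{2\sqrt\pi}\sqrt t\,r^{-2}$, and integrating against $2\,d\nu_{\rm rem}$ yields $(k-1)\eta r^{-2}$.

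For $k=2$ this does not apply as stated, because the factor $(1-\rho^2/(4r^2))^{-1/2}$ is unbounded, and your proposal gives no substitute. The paper obtains $-h'(r)\le k\eta r^{-2}$ for every $k$, hence $2\eta r^{-2}$ when $k=2$, from the scaled representation $h(r)=2\kappa_{k,s}r^{-s}\int_0^\infty e^{-\lambda r^2\tau}\delta_\tau(1)\tau^{-1-s/2}\,d\tau$. Differentiating in $r$ and returning to $t=r^2\tau$ expresses $-h'$ through $\delta_t$ itself rather than its derivative:
\[
-h'(r)=\frac{4\kappa_{k,s}}{r}\int_0^\infty e^{-\lambda t}\,\delta_t(r)\Bigl[\lambda t^{-s/2}+\frac s2\,t^{-1-s/2}\Bigr]\,dt .
\]
Then $\delta_t(r)\le\frac{k}{2\sqrt\pi}\sqrt t/r$ together with $\Gamma(\frac{3-s}2)=\frac{1-s}2\Gamma(\frac{1-s}2)$ gives exactly $k\eta r^{-2}$.
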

\begin{proof}
Scaling first gives the fixed-domain formula
\[
h(r)=2\kappa_{k,s} r^{-s}\int_0^\infty e^{-\lambda r^2\tau}\delta_\tau(1)\tau^{-1-s/2}\dd\tau.
\]

On any compact positive radius interval, $\delta_\tau(1)\le\min\{1/2,k\sqrt\tau/(2\sqrt\pi)\}$ dominates the integrand and its radius derivative at zero; exponential decay dominates them at infinity. 
Thus $h\in C^1$.
Differentiating this fixed-domain formula and returning to $t=r^2\tau$ gives
\begin{equation}\label{eq:hintegrated}
\begin{aligned}
-h'(r)&=\frac{4\kappa_{k,s}}{r}\int_0^\infty e^{-\lambda t}\delta_t(r)
 [\lambda t^{-s/2}+\frac{s}{2}t^{-1-s/2}]\dd t\\
&\le\frac{2k\kappa_{k,s}}{\sqrt\pi r^2}\lambda^{(s-1)/2}
 [\Gamma((3-s)/2)+(s/2)\Gamma((1-s)/2)]
\\&=k\eta r^{-2}.
\end{aligned}
\end{equation}
The Gamma recurrence reduces the bracket to $\Gamma((1-s)/2)/2$.

By \eqref{eq:deltamon}, \eqref{eq:heatloss} and \eqref{eq:eta} we obtain the first bound in \eqref{eq:hbound}; $h$ is decreasing by \eqref{eq:deltamon}, and $h(r)\to\infty$ as $r\to0$ by monotone convergence since $\int_0^\infty \frac12\dd\nu_{\rm rem}(t)=\infty$. By \eqref{eq:dJ} and Tonelli, 
\[
\int_0^\infty U_t(A)\dd\nu_{\rm rem}(t)=\int_0^Ah(r(v))\dd v\ge A\,h(R)
,
\]
 and the upper bound for $\int_0^\infty U_t(A)\dd\nu_{\rm rem}(t)$ follows from the first inequality in \eqref{eq:heatloss} in the same way.

Differentiating $\delta_t(r)=\frac12-\int_{B_r}G_t(re_1-w)\dd w$ and using the divergence theorem,
\begin{equation}\label{eq:bt}
b_t(r):=-\partial_r\delta_t(r)=\int_{\partial B_r}G_t(re_1-w)\,\big(1-e_1\cdot \nu_w\big)\dd\sigma(w)\ge0 .
\end{equation}
If $k=1$, $\partial B_r=\{\pm r\}$ and $b_t(r)=2G^{(1)}_t(2r)$. If $k\ge2$, write $w=r\nu$, $c=e_1\cdot\nu$, so that $|re_1-w|^2=2r^2(1-c)$; slicing the sphere by the level sets of $c$ and substituting the chord length $y=r\sqrt{2(1-c)}\in(0,2r)$ gives
\begin{equation}\label{eq:chord}
b_t(r)=\frac{|S^{k-2}|}{2r^2}\int_0^{2r}\frac{e^{-y^2/(4t)}}{(4\pi t)^{k/2}}\;y^{k}\Big(1-\frac{y^2}{4r^2}\Big)^{\frac{k-3}{2}}\dd y .
\end{equation}
The surface formula \eqref{eq:bt} gives a compact-radius majorant $C\sqrt t$ at small times (split the sphere at polar angle $\pi/2$; for $k=1$ use the exact Gaussian), with exponential domination at large times in the weighted remainder integral. This also justifies the derivative exchange and $-h'(r)=2\int_0^\infty b_t(r)\dd\nu_{\rm rem}(t)$.

For $k\ge3$ the last factor is at most $1$, and 
\[
|S^{k-2}|\int_0^\infty(4\pi t)^{-(k-1)/2}e^{-y^2/(4t)}y^k\dd y=2t(k-1)
\]
 is the second moment of the Gaussian of $\R^{k-1}$.
Thus, for $k\ge3$,
\begin{equation}\label{eq:bbound}
b_t(r)\le(k-1)tG_t^{(1)}(0)r^{-2}=\frac{k-1}{2\sqrt\pi}\frac{\sqrt t}{r^2}.
\end{equation}

For $k=2$, \eqref{eq:hintegrated} gives the time-integrated bound in \eqref{eq:curvature2}. For $k\ge3$, integrating \eqref{eq:bbound} preserves the better coefficient $k-1$; for $k=1$ the exact formula proves the dimension-one assertion. Finally $h(r)=\int_r^\infty(-h'(\sigma))\dd\sigma$ gives the respective bounds for $h$.

\emph{Strict monotonicity.} For $r>0$ let 
\[
\phi(r)=\kappa_{k,s}\int_0^\infty e^{-\lambda r^2\tau}(4\pi\tau)^{-k/2}e^{-1/(4\tau)}\tau^{-1-s/2}\dd\tau
;
\]
 then $\phi$ is strictly decreasing, and the substitution $t=r^2\tau$ shows $\int_0^\infty G_t(r e_1)\dd\nu_{\rm rem}(t)=r^{-k-s}\phi(r)$. By \eqref{eq:bt} with $w=r\nu$ and Tonelli,
\[
-h'(r)=2r^{-1-s}\int_{S^{k-1}}|\nu-e_1|^{-k-s}\,(1-e_1\cdot\nu)\,\phi\big(r|\nu-e_1|\big)\dd\sigma(\nu)\qquad(k\ge2),
\]
and $-h'(r)=4(2r)^{-1-s}\phi(2r)$ for $k=1$; both are strictly decreasing in $r$.
For $k\ge3$, after multiplication by $r^{k-1}$, the factor $r^{k-3}(1-y^2/(4r^2))^{(k-3)/2}$ is nondecreasing on $r>y/2$; the positive integration interval expands strictly. In particular, \eqref{eq:chord} shows that
\[
r^{k-1}b_t(r)=\frac{|S^{k-2}|}{2}r^{k-3}
\int_0^{2r}\frac{e^{-y^2/(4t)}}{(4\pi t)^{k/2}}y^k\left(1-\frac{y^2}{4r^2}\right)^{(k-3)/2}\dd y
\]
 is strictly increasing in $r$, which gives the asserted weighted strict increase.
\end{proof}
The next lemma links $V$ and $h$ at $r=R$ and evaluates
$\mathcal G-\theta\mathcal L$ at $a=0$; this is the source of condition
$(\mathrm I_\theta)$.
\begin{lemma}\label{lem:formulas}
The boundary value of the potential is
\begin{equation}\label{eq:VR}
V(R)=\frac12\left[\ell-\left(1-\frac sk\right)\frac{b_{k,s}}{\omega_k}R^{-s}+h(R)\right].
\end{equation}

For every $\theta\ge1$,
\begin{equation}\label{eq:endpoint}
\begin{aligned}
\mathcal G(0)-\theta\mathcal L(0)
&=\frac12\left[A\ell-\left(1+\frac sk\right)b_{k,s}R^{k-s}\right]\\
&\quad-(\theta-1)\int_0^\infty U_t(A)\dd\nu_{\rm rem}(t)
+\left(\theta-\frac12\right)Ah(R),
\end{aligned}
\end{equation}
and
\[
\mathcal G(0)\ge\frac12\left[A\ell-\left(1+\frac sk\right)b_{k,s}R^{k-s}\right].
\]

Together with \eqref{eq:cone} the boundary identity gives
\begin{equation}\label{eq:conedifference}
V(R)-V(2R)\ge\frac12\left[\ell-\left(1-\frac sk\right)\frac{b_{k,s}}{\omega_k}R^{-s}\right]-k2^{-k}\omega_k\ln3\,R^{-s}.
\end{equation}
\end{lemma}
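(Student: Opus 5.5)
The plan is to express everything through the two identities $u_t(R)=\tfrac12-\delta_t(R)$ and $U_t'(A)=2\delta_t(R)$, and then to differentiate the integrated ball identity \eqref{eq:P} in $p$.

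\emph{Boundary value of $V$.} Since $u_t(R)=\int_{B_R}G_t(Re_1-w)\,dw=\tfrac12-\delta_t(R)$, integrating against $\nu_{\rm gap}$ and using \eqref{eq:gapmass} gives
\[
V(R)=\frac{\ell}{2}-\int_0^\infty\delta_t(R)\,d\nu_{\rm gap}(t).
\]
Because $\nu_{\rm gap}+\nu_{\rm rem}=\kappa_{k,s}t^{-1-s/2}dt$, it remains to compute $2\kappa_{k,s}\int_0^\infty\delta_t(R)\,t^{-1-s/2}dt$ and subtract $h(R)$. For this I differentiate \eqref{eq:P} at $p=A$. By \eqref{eq:dJ}, $U_t'(p)=2\delta_t(r(p))$. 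Differentiation under the integral is justified by domination: $\delta_t(r)\le\min\{\tfrac12,\tfrac{k}{2\sqrt\pi}\sqrt t/r\}$ from \eqref{eq:deltamon}--\eqref{eq:heatloss}, uniformly for $r$ in compact subsets of $(0,\infty)$, and $\min\{1,\sqrt t\}t^{-1-s/2}$ is integrable. On the right-hand side,
\[
\frac{d}{dp}\,b_{k,s}r(p)^{k-s}=\frac{k-s}{k}\,b_{k,s}\frac{r^{k-s}}{p}=\Bigl(1-\frac sk\Bigr)\frac{b_{k,s}}{\omega_k}R^{-s}\quad\text{at }p=A.
\]
Therefore
\[
2\int_0^\infty\delta_t(R)\,d\nu_{\rm gap}=\Bigl(1-\frac sk\Bigr)\frac{b_{k,s}}{\omega_k}R^{-s}-h(R),
\]
and \eqref{eq:VR} follows.

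\emph{Endpoint identity.} Take $a=0$ in \eqref{eq:Wt}. Since $J_t(0,A)=0$ and $J_t(A,A)=A-U_t(A)$, we get $\mathcal G_t(0)=A-U_t(A)-Au_t(R)$. Integrating against $\nu_{\rm gap}$, and using \eqref{eq:P} at $p=A$ in the form $\int_0^\infty U_t(A)\,d\nu_{\rm gap}=b_{k,s}R^{k-s}-\int_0^\infty U_t(A)\,d\nu_{\rm rem}$, gives
\[
\mathcal G(0)=A\ell-b_{k,s}R^{k-s}+\int_0^\infty U_t(A)\,d\nu_{\rm rem}-AV(R).
\]
Similarly, \eqref{eq:WFint} together with the Tonelli identity $\int_0^\infty U_t(A)\,d\nu_{\rm rem}=\int_0^Ah(r(v))\,dv$ from the proof of Lemma~\ref{lem:h} gives
\[
\mathcal L(0)=\int_0^\infty U_t(A)\,d\nu_{\rm rem}-Ah(R).
\]
Substitute \eqref{eq:VR} multiplied by $A$, using $AR^{-s}/\omega_k=R^{k-s}$. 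The coefficient of $b_{k,s}R^{k-s}$ then becomes $-1+\tfrac12(1-\tfrac sk)=-\tfrac12(1+\tfrac sk)$, which yields
\[
\mathcal G(0)=\frac12\Bigl[A\ell-\Bigl(1+\frac sk\Bigr)b_{k,s}R^{k-s}\Bigr]+\int_0^\infty U_t(A)\,d\nu_{\rm rem}-\frac12Ah(R).
\]
Subtracting $\theta\mathcal L(0)$ gives \eqref{eq:endpoint}. The lower bound for $\mathcal G(0)$ follows from this formula, since $\int_0^\infty U_t(A)\,d\nu_{\rm rem}\ge Ah(R)$ by \eqref{eq:hbound} and $h(R)>0$.

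\emph{Cone difference.} In \eqref{eq:VR}, drop the term $h(R)>0$ to get a lower bound for $V(R)$, and combine it with the upper bound \eqref{eq:cone} for $V(2R)$. This gives \eqref{eq:conedifference}.

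The only step needing care is the justification of differentiating \eqref{eq:P} under the $t$-integral. The bounds of Lemma~\ref{lem:ballfunctions} handle it directly; the rest is algebra.
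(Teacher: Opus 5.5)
Your proposal is correct and follows the paper's proof essentially step for step. Both obtain $2\kappa_{k,s}\int_0^\infty\delta_t(R)\,t^{-1-s/2}\,dt=(1-\frac sk)\frac{b_{k,s}}{\omega_k}R^{-s}$ by differentiating the ball identity \eqref{eq:P} in volume, then do the same algebra for $\mathcal G(0)$, $\mathcal L(0)$ and the cone difference. The only variation is the justification of that derivative: you use dominated convergence, whereas the paper uses the scaling $\delta_t(r)=\delta_1(r/\sqrt t)$ to write the $t$-integral as $Cr^{-s}$, integrates $U_t'$ over volume by Tonelli, and differentiates the resulting identity, which needs no domination bound.
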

\begin{proof}
By scaling, $\delta_t(r)=\delta_1(r/\sqrt t)$, so $2\kappa_{k,s}\int_0^\infty\delta_t(r)t^{-1-s/2}\dd t=C\,r^{-s}$ for a constant $C$. Integrating $U_t'(v)=2\delta_t(r(v))$ over $v\in(0,p)$ and using \eqref{eq:P} gives 
\[
b_{k,s}r(p)^{k-s}=\int_0^pC\,r(v)^{-s}\dd v
,
\qquad\text{hence}\quad
C\,r(p)^{-s}=\frac{d}{dp}[b_{k,s}r(p)^{k-s}]=\left(1-\frac sk\right)\frac{b_{k,s}}{\omega_k}r(p)^{-s}
.
\]
 Since $u_t(R)=\frac12-\delta_t(R)$, it follows that
\[
V(R)=\frac12\ell-\int_0^\infty\delta_t(R)\dd\nu_{\rm gap}(t)
=\frac12\ell-\frac12\left[\left(1-\frac sk\right)\frac{b_{k,s}}{\omega_k}R^{-s}-h(R)\right].
\]

By \eqref{eq:dJ}, $\int_0^Au_t(r(v))\dd v=J_t(A,A)=A-U_t(A)$, so \eqref{eq:P}, \eqref{eq:WFint} and \eqref{eq:VR} give
\[
\begin{aligned}
\int_0^AV(r(v))\dd v
&=A\ell-b_{k,s}R^{k-s}+\int_0^\infty U_t(A)\dd\nu_{\rm rem}(t),\\
\mathcal G(0)&=A\ell-b_{k,s}R^{k-s}+\int_0^\infty U_t(A)\dd\nu_{\rm rem}(t)\\
&\quad-\frac A2\left[\ell-\left(1-\frac sk\right)\frac{b_{k,s}}{\omega_k}R^{-s}+h(R)\right],\\
\mathcal L(0)&=\int_0^\infty U_t(A)\dd\nu_{\rm rem}(t)-A\,h(R),
\end{aligned}
\]
and \eqref{eq:endpoint} follows because $1-\left(1-\frac sk\right)/2=\frac12(1+\frac sk)$. The lower bound for $\mathcal G(0)$ follows from its displayed formula and $\int_0^\infty U_t(A)\dd\nu_{\rm rem}(t)\ge A\,h(R)$.
Inequality \eqref{eq:conedifference} follows from \eqref{eq:VR}, \eqref{eq:cone} and $h\ge0$.
\end{proof}
\subsection{Interior and exterior criteria}

For $\theta\ge1$ put $Q_\theta=\mathcal G-\theta\mathcal L$ on $[0,\infty)$ and,
in the radius variable,
\[
q_\theta(r)=\theta[h(r)-h(R)]-[V(r)-V(R)]\qquad(r>0).
\]
By \eqref{eq:WFint}, $Q_\theta(a)=\int_A^aq_\theta(r(v))\,dv$; thus $Q_\theta$ is
$C^1$ on $(0,\infty)$ with $Q_\theta'(a)=q_\theta(r(a))$, and $Q_\theta(A)=0$.
\begin{lemma}\label{lem:inside}
Let $\theta\ge1$. If $Q_\theta(0)\ge0$, then $Q_\theta>0$ on $(0,A)$.
\end{lemma}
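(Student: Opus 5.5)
The plan is to study the sign pattern of the derivative $Q_\theta'(a)=q_\theta(r(a))$ on $(0,A)$. Because $a\mapsto r(a)$ is an increasing bijection of $(0,A)$ onto $(0,R)$, it suffices to understand the sign of $q_\theta$ on $(0,R)$.

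\emph{Step 1: $q_\theta$ is strictly convex on $(0,R]$.} By Lemma~\ref{lem:h}(ii), $-h'$ is positive and strictly decreasing. Hence $h'$ is strictly increasing and $h$ is strictly convex on $(0,\infty)$. By Lemma~\ref{lem:V}(ii), $V$ is concave on $[0,R]$. Since $\theta\ge1>0$, the function $q_\theta=\theta[h-h(R)]-[V-V(R)]$ is therefore strictly convex on $(0,R]$.

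\emph{Step 2: boundary behaviour.} We have $q_\theta(R)=0$. Moreover $q_\theta(r)\to+\infty$ as $r\downarrow0$, because $h\to+\infty$ (Lemma~\ref{lem:h}(i)) while $0\le V\le\ell$ (Lemma~\ref{lem:V}(i)).

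\emph{Step 3: only two sign patterns are possible.} A strictly convex function has at most two zeros, and one of them is at $R$. So either $q_\theta>0$ on all of $(0,R)$, or there is exactly one $r_*\in(0,R)$ with $q_\theta>0$ on $(0,r_*)$ and $q_\theta<0$ on $(r_*,R)$. A tangential zero $r_1<R$ with $q_\theta\ge0$ nearby is excluded: it would be a strict minimum, forcing $q_\theta>0$ at $R$.

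\emph{Step 4: continuity of $Q_\theta$ at $0$.} $Q_\theta$ is continuous on $[0,A]$. Indeed $\mathcal G$ and $\mathcal L$ are finite at $0$ (from the bounds after \eqref{eq:WF}), and \eqref{eq:WFint} expresses each as an integral of a nonnegative integrand. Monotone convergence therefore gives $Q_\theta(a)\to Q_\theta(0)$ as $a\downarrow0$. This holds even for $k=1$, where $h(r(v))$ need not be absolutely integrable in a naive sense, because we use the finite values $\mathcal G(0)$ and $\mathcal L(0)$ directly.

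\emph{Step 5: conclusion.} Suppose first that $q_\theta>0$ on $(0,R)$. Then $Q_\theta$ is strictly increasing on $[0,A]$ with $Q_\theta(A)=0$, so $Q_\theta(0)<0$. This contradicts the hypothesis, so this case cannot occur. Hence the second pattern holds. Put $a_*=\omega_kr_*^k$.
\begin{itemize}
\item On $[a_*,A)$, $Q_\theta$ is strictly decreasing with $Q_\theta(A)=0$, so $Q_\theta>0$ there.
\item On $(0,a_*]$, $Q_\theta$ is strictly increasing with $Q_\theta(0)\ge0$, so $Q_\theta>0$ there.
\end{itemize}
Together these give $Q_\theta>0$ on $(0,A)$.

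The only delicate points are the strictness in Step 1, supplied by the strict monotonicity of $-h'$, and the endpoint continuity in Step 4. I expect Step 4 to need the most care, and I would justify it exactly through the monotone-convergence argument above.
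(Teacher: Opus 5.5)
Your proof is correct and follows the paper's argument essentially verbatim: strict convexity of $q_\theta$ on $(0,R]$ from Lemma~\ref{lem:V}(ii) and Lemma~\ref{lem:h}(ii), the blow-up $q_\theta(r)\to+\infty$ as $r\downarrow0$, the resulting two sign patterns, and the exclusion of the first pattern via $Q_\theta(0)<Q_\theta(A)=0$. Your Step~4 spells out the continuity of $Q_\theta$ at $0$, which the paper uses tacitly through the integral representation \eqref{eq:WFint}. The $k=1$ caveat there is unnecessary: $\mathcal L(0)$ is finite and its integrand is nonnegative, so $h(r(\cdot))$ is already integrable on $(0,A)$.
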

\begin{proof}
On $(0,R]$ we have $q_\theta'=-V'+\theta h'$, which is strictly increasing by Lemma~\ref{lem:V}(ii) and Lemma~\ref{lem:h}(ii). Hence $q_\theta$ is strictly convex on $(0,R]$, $q_\theta(R)=0$, and $q_\theta(r)\to+\infty$ as $r\to0$ by Lemma~\ref{lem:h}(i). So either $q_\theta>0$ on $(0,R)$, or there is $r_c\in(0,R)$ with $q_\theta>0$ on $(0,r_c)$ and $q_\theta<0$ on $(r_c,R)$.

In the first case $Q_\theta$ is strictly increasing on $[0,A]$, so $Q_\theta(0)<Q_\theta(A)=0$, contrary to the assumption.

In the second case $Q_\theta$ is strictly increasing on $[0,A_c]$ and strictly decreasing on $[A_c,A]$, where $A_c=\omega_k r_c^k$; hence $Q_\theta>Q_\theta(0)\ge0$ on $(0,A_c]$ and $Q_\theta>Q_\theta(A)=0$ on $[A_c,A)$.
\end{proof}
\begin{lemma}\label{lem:outside}
Let $\theta\ge1$. Each of the following conditions implies $q_\theta>0$ on $(R,\infty)$, hence $Q_\theta>0$ on $(A,\infty)$:
\begin{enumerate}
\item[(a)] $k=1$ and $R\sqrt\lambda\ge\ln(1+2\sqrt\theta)$;
\item[(b)] $k\ge2$ and $V(R)-V(2R)\ge2\theta\eta/R$ for $k=2$,
or $V(R)-V(2R)\ge(k-1)\theta\eta/R$ for $k\ge3$;
\item[(c)] $k\ge3$ and $V(R)\ge \theta\,h(R)$.
\end{enumerate}
\end{lemma}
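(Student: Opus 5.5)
We prove $q_\theta>0$ on $(R,\infty)$ separately under each hypothesis. Once this holds, $Q_\theta(a)=\int_A^a q_\theta(r(v))\,dv>0$ for $a>A$. Throughout we use $q_\theta(R)=0$ and
\[
q_\theta(r)=[V(R)-V(r)]-\theta[h(R)-h(r)],
\]
together with $q_\theta'=-V'+\theta h'$.

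\emph{Case (b).} We split $(R,\infty)$ at $2R$. Let $c=2$ for $k=2$ and $c=k-1$ for $k\ge3$.

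For $r\ge2R$, monotonicity of $V$ (Lemma~\ref{lem:V}(i)) gives $V(R)-V(r)\ge V(R)-V(2R)\ge c\theta\eta/R$. Positivity of $h$ and \eqref{eq:curvature2}--\eqref{eq:curvaturehigh} give $h(R)-h(r)<h(R)\le c\eta/R$. Together these give $q_\theta(r)>0$.

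For $r\in(R,2R)$ we use convexity of $V$ on $[R,\infty)$ (Lemma~\ref{lem:V}(ii)). The chord bound yields
\[
V(R)-V(r)\ge\frac{r-R}{R}\,[V(R)-V(2R)]\ge (r-R)\,\frac{c\theta\eta}{R^2}.
\]
Since $-h'$ is strictly decreasing and $-h'(R)\le c\eta R^{-2}$, we get $h(R)-h(r)<(r-R)(-h'(R))\le (r-R)\,c\eta R^{-2}$. Hence $q_\theta(r)>0$.

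\emph{Case (c).} By Lemma~\ref{lem:V}(ii), $r^{k-1}(-V'(r))$ is nonincreasing on $[R,\infty)$. By Lemma~\ref{lem:h}(iv), $r^{k-1}(-h'(r))>0$ is strictly increasing. So the ratio $(-V')/(-h')$ is strictly decreasing on $(R,\infty)$. Writing $q_\theta'=(-h')[(-V')/(-h')-\theta]$, we see that $q_\theta'$ changes sign at most once on $(R,\infty)$, and only from positive to negative.

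Since $V,h\to0$ at infinity, $q_\theta(r)\to V(R)-\theta h(R)\ge0$. There are two possibilities. If $q_\theta'<0$ on all of $(R,\infty)$, then $q_\theta$ decreases strictly from $q_\theta(R)=0$, so its limit is negative, which is a contradiction. Otherwise $q_\theta$ first increases strictly from $0$ and then decreases to a nonnegative limit, so $q_\theta>0$ throughout $(R,\infty)$.

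\emph{Case (a), $k=1$.} Here we compare time by time. Write $x=r-R>0$ and $G=G^{(1)}_t$. Explicitly,
\[
u_t(R)-u_t(r)=\int_0^{x}G-\int_{2R}^{2R+x}G,\qquad
2[\delta_t(R)-\delta_t(r)]=2\int_{2R}^{2R+2x}G\le4\int_{2R}^{2R+x}G,
\]
where the last inequality holds because $G$ is decreasing. For $y\ge0$ we have $G(y)/G(y+2R)\ge e^{R^2/t}$, so $\int_0^xG\ge e^{R^2/t}\int_{2R}^{2R+x}G$. The integrand of $q_\theta$ against $\kappa_{1,s}t^{-1-s/2}\,dt$ is therefore at least
\[
\Bigl(\int_{2R}^{2R+x}G\Bigr)\Bigl[(1-e^{-\lambda t})(e^{R^2/t}-1)-4\theta e^{-\lambda t}\Bigr],
\]
and this is strictly positive provided $(e^{\lambda t}-1)(e^{R^2/t}-1)>4\theta$ for every $t$.

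To verify this we use that $u\mapsto\log(e^{e^u}-1)$ is convex. This gives $(e^a-1)(e^b-1)\ge(e^{\sqrt{ab}}-1)^2$, and with $ab=\lambda R^2$ the product is at least $(e^{R\sqrt\lambda}-1)^2\ge4\theta$ under the hypothesis. Strictness follows because the first inequality $\int_{2R}^{2R+2x}G<2\int_{2R}^{2R+x}G$ is strict.

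The main obstacle is this last step: proving the convexity claim cleanly, or else verifying the minimization of $(e^{\lambda t}-1)(e^{R^2/t}-1)$ over $t$ directly.
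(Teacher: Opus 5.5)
Your proof is correct and follows the paper's argument. It has the same three cases. In (a) you use the convexity of $\xi\mapsto\ln(e^{e^\xi}-1)$ and the bound $G^{(1)}_t(y+2R)\le e^{-R^2/t}G^{(1)}_t(y)$; in (b) the chord bound on $[R,2R]$ and monotonicity of $V$ beyond $2R$; in (c) the single sign change of $q_\theta'$ together with the nonnegative limit $V(R)-\theta h(R)$. The only differences are these. In (a) you work with $q_\theta$ itself, and your expressions are just the $y$-integrals of the formulas for $-V'$ and $-h'$ in Lemmas~\ref{lem:V}(iii) and \ref{lem:h}(iii), whereas the paper shows $q_\theta'>0$ directly. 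In (c) you use the ratio $(-V')/(-h')$, whereas the paper observes that $r^{k-1}q_\theta'$ is strictly decreasing. The convexity step you flag as the main obstacle is immediate. The derivative of $\xi\mapsto\ln(e^{e^\xi}-1)$ is $e^\xi/(1-e^{-e^\xi})$, and $u\mapsto u/(1-e^{-u})$ is increasing because $e^u>1+u$.
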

\begin{proof}
For $r>R$, $q_\theta(r)=[V(R)-V(r)]-\theta[h(R)-h(r)]=\int_R^r(-V'+\theta h')$.

(a) By Lemma~\ref{lem:V}(iii) and Lemma~\ref{lem:h}(iii), for $y>0$,
\begin{align*}
&(-V'+\theta h')(R+y)=\kappa_{k,s}\int_0^\infty I_t(y)\,\frac{\dd t}{t^{1+s/2}},\\
&I_t(y)=(1-e^{-\lambda t})[G^{(1)}_t(y)-G^{(1)}_t(2R+y)]-4\theta e^{-\lambda t}G^{(1)}_t(2R+2y).
\end{align*}
Since $G^{(1)}_t(2R+y)$ and $G^{(1)}_t(2R+2y)$ are at most $e^{-R^2/t}G^{(1)}_t(y)$,
\[
I_t(y)\ge G^{(1)}_t(y)\,e^{-\lambda t-R^2/t}\big[(e^{\lambda t}-1)(e^{R^2/t}-1)-4\theta\big].
\]
The function $\psi(\xi)=\ln(e^{e^\xi}-1)$ is convex, because $\psi'(\xi)=e^\xi/(1-e^{-e^\xi})$ is increasing; hence 
\[
(e^{\lambda t}-1)(e^{R^2/t}-1)\ge(e^{R\sqrt\lambda}-1)^2
,
\]
and the bracket is at least $(e^{R\sqrt\lambda}-1)^2-4\theta\ge0$, with strict inequality for $\lambda t\ne R^2/t$. Thus $-V'+\theta h'>0$ on $(R,\infty)$.

(b) Put $\epsilon=2\eta$ when $k=2$, and $\epsilon=(k-1)\eta$ when $k\ge3$. By Lemma~\ref{lem:h}(iv), 
\[
h(R)-h(r)=\int_R^r(-h'(\sigma))\dd\sigma\le\epsilon(r-R)/(Rr)
.
\]
 If $R<r\le2R$, convexity of $V$ on $[R,2R]$ (Lemma~\ref{lem:V}(ii)) gives 
\[
V(R)-V(r)\ge\frac{r-R}{R}[V(R)-V(2R)]
,
\]
 so that
\[
q_\theta(r)\ge\frac{r-R}{R}\big[V(R)-V(2R)-\theta\epsilon/r\big]>0
.
\]
 If $r\ge2R$, then $V(R)-V(r)\ge V(R)-V(2R)\ge\theta\epsilon/R>\theta[h(R)-h(r)]$.

(c) By Lemma~\ref{lem:V}(ii) and Lemma~\ref{lem:h}(iv), 
$r^{k-1}q_\theta'(r)=r^{k-1}(-V'+\theta h')(r)$
 is strictly decreasing on $(R,\infty)$. Hence there is $r_*\in[R,\infty]$ such that $q_\theta$ is strictly increasing on $[R,r_*]$ and strictly decreasing on $[r_*,\infty)$. Recall that $q_\theta(R)=0$ and $q_\theta(r)\to V(R)-\theta h(R)\ge0$ as $r\to\infty$ (Lemma~\ref{lem:V}(i) and Lemma~\ref{lem:h}(i)).

If $r_*=R$, then $q_\theta$ would be strictly decreasing on $[R,\infty)$ starting from $0$, which contradicts the sign of its limit. Hence $r_*>R$, $q_\theta>0$ on $(R,r_*]$, and on $[r_*,\infty)$ the function $q_\theta$ decreases strictly to a nonnegative limit, so it stays positive there as well.
\end{proof}
\begin{proof}[Proof of Proposition~\ref{prop:scalar}]
Put $x=R\sqrt\lambda$, so that $R=x/\sqrt\lambda$ and $\frac{b_{k,s}}{\omega_k}R^{-s}=\frac{b_{k,s}}{\omega_k}\lambda^{s/2}x^{-s}$.

\emph{The range $a<A$.} Equation~\eqref{eq:endpoint}, the upper bound in \eqref{eq:hbound}, and $(\mathrm I_\theta)$ imply
\[
\frac{\mathcal G(0)-\theta\mathcal L(0)}A
\ge\frac12\left[\ell-\left(1+\frac sk\right)\frac{b_{k,s}}{\omega_k}R^{-s}\right]-(\theta-1)\frac1A\int_0^\infty U_t(A)\dd\nu_{\rm rem}(t)
\ge0.
\]

Lemma~\ref{lem:inside} gives $Q_\theta>0$ on $(0,A)$. If $\theta=1$, \eqref{eq:endpoint} and $(\mathrm I_1)$ give
\[
(\mathcal G-\mathcal L)(0)\ge\frac12Ah(R)>0.
\]

\emph{The range $a>A$.} For $k=1$, $(\mathrm{II}_\theta)$ is condition (a) of Lemma~\ref{lem:outside}.
For $k=2$, \eqref{eq:conedifference} gives
\[
V(R)-V(2R)\ge\frac12\left[\ell-(1-s/2)\frac{b_{2,s}}{\omega_2}R^{-s}\right]
-\frac12\omega_2\ln3\,R^{-s}.
\]

After substituting $R=x/\sqrt\lambda$ and \eqref{eq:gapmass}, $(\mathrm{II}_\theta)$ says exactly that this is at least $2\theta\eta/R$. Thus condition (b) holds.
For $k\ge3$, \eqref{eq:VR} yields
\[
V(R)-\theta h(R)=\frac12\left[\ell-\left(1-\frac sk\right)\frac{b_{k,s}}{\omega_k}R^{-s}\right]
-\frac12(2\theta-1)h(R).
\]

Using the bound $h(R)\le(k-1)\eta/R$ from \eqref{eq:curvaturehigh},
together with \eqref{eq:gapmass} and $R=x/\sqrt\lambda$, condition
$(\mathrm{II}_\theta)$ makes the right-hand side nonnegative and yields condition (c).
In every branch Lemma~\ref{lem:outside} gives $Q_\theta>0$ on $(A,\infty)$. At $a=A$ both gaps are zero. This proves the proposition on $[0,\infty)$.
\end{proof}

\section{Proofs of the main results}\label{sec:proofs}
\subsection{Uniqueness}\label{sec:proofA}
The inequality follows from Propositions~\ref{prop:reduction} and
\ref{prop:scalar}. The equality case uses the following rigidity lemma.
\begin{lemma}\label{lem:rigidity}
Let $E\subset M\times\mathbb R^k$ be measurable with $|E|=|M|A$. Suppose that $a_E=A$ almost everywhere and
$\cP_s(E)-\cP_s(M\times B_R)=0$.
Then $E=M\times B_R(z_0)$ modulo null sets for some $z_0\in\R^k$.
\end{lemma}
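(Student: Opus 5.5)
We prove the lemma by forcing equality in every inequality used in Proposition~\ref{prop:reduction}, and then using the equality cases of the Riesz rearrangement inequality. Since $a_E\equiv A$ a.e., we have $\mathcal L(a_E)=0$ a.e., so Proposition~\ref{prop:reduction} gives
\[
0=\mathcal P_s(E)-\mathcal P_s(M\times B_R)\ge c_{m,k,s}|M|\,D(\bar\rho_E)\ge0 .
\]
Hence $D(\bar\rho_E)=0$. We also retrace the proof of the proposition. The pointwise-in-$t$ lower bound
\[
\frac{H_t(E)}{|M|}\ge U_t(A)+(1-e^{-\lambda t})D_t(\bar\rho_E)
\]
integrates to equality. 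Each dropped term is nonnegative for every $t$: this covers the Riesz step on the slices, $\iint_{E_x\times E_x}G_t\le J_t(A,A)$, and the spectral inequality of Lemma~\ref{lem:spectral}. So equality holds for a.e.\ $t>0$, and by continuity in $t$ for every $t$. In particular $D_t(\bar\rho_E)=0$ for all $t$, since the weight $1-e^{-\lambda t}$ is positive.

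Next, $D_t(\bar\rho_E)=0$ means $\iint\bar\rho_E G_t\bar\rho_E=J_t(A,A)$. Because $0\le\bar\rho_E\le1$ and $\int\bar\rho_E=A$, the bathtub principle together with \eqref{eq:rieszfn} shows that this maximal value is attained only when $\bar\rho_E^*=\chi_{B_R}$. Concretely, write $\rho^*:=\bar\rho_E^*$ and apply Lemma~\ref{lem:radialcomparison} with $K=G_t$ and $f=\rho^*$. The convolution $G_t*\chi_{B_R}$ is strictly radially decreasing, so equality forces $\rho^*=\chi_{B_R}$. Thus $\bar\rho_E$ takes only the values $0$ and $1$ a.e., that is, $\bar\rho_E=\chi_B$ with $|B|=A$. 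Then equality in Riesz's inequality for the Gaussian, a strictly symmetric decreasing kernel, gives $B=B_R(z_0)$ up to a null set. We take this from the equality case in \cite{BH}, or in Lieb's theorem, which characterizes equality $\iint\chi_BG_t\chi_B=\iint\chi_{B^*}G_t\chi_{B^*}$ by $B$ being a translate of a ball.

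Finally, $\bar\rho_E=\chi_{B_R(z_0)}$ means that for a.e.\ $z\in B_R(z_0)$ we have $\mu(\{x:(x,z)\in E\})=|M|$, and for a.e.\ $z\notin B_R(z_0)$ this measure is $0$. By Fubini, $E\supset M\times B_R(z_0)$ and $E\cap(M\times B_R(z_0)^c)$ are both determined up to null sets, so $E=M\times B_R(z_0)$ modulo null sets.

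The main obstacle is the equality case of the functional Riesz inequality, in the step from $\iint\rho G_t\rho=J_t(A,A)$ to $\rho=\chi_{B_R(z_0)}$. The route above splits it into two parts. Lemma~\ref{lem:radialcomparison} handles the passage $\rho^*\to\chi_{B_R}$. The cited set-equality theorem then identifies $B$ as a ball. One must also check that $\rho^*=\chi_{B_R}$ indeed forces $\rho$ to be an indicator, which holds because rearrangement preserves the distribution function. In this argument we never needed the hypothesis $a_E=A$ beyond killing $\mathcal L$; it is nonetheless consistent with the conclusion.
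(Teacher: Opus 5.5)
Your proposal is correct and follows the paper's proof essentially step for step: you get $D(\bar\rho_E)=0$ and hence $D_t(\bar\rho_E)=0$ at some time; Riesz plus Lemma~\ref{lem:radialcomparison}, using the strict radial decrease of $G_t*\chi_{B_R}$, force $\rho^*=\chi_{B_R}$; equimeasurability makes $\bar\rho_E$ an indicator, the Burchard--Hajaiej (or Lieb) equality case makes it a translated ball, and \eqref{eq:alpharho}, or Fubini, finishes. The differences are minor: retracing the slice and spectral inequalities and passing to every $t$ by continuity is unnecessary, since a single time with $D_t(\bar\rho_E)=0$ suffices; and the strict radial decrease of $G_t*\chi_{B_R}$, which you only assert, is proved in the paper by a short reflection argument.
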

\begin{proof}
Proposition~\ref{prop:reduction} gives $D(\bar\rho_E)=0$, since $\mathcal L(A)=0$.
Every $D_t(\bar\rho_E)$ is nonnegative and the weight
$(1-e^{-\lambda t})t^{-1-s/2}$ is positive, so this interaction deficit
vanishes for almost every $t>0$. Fix one such time, put $\rho=\bar\rho_E$, and let $\rho^*$ be its
symmetric decreasing rearrangement. The Riesz inequality
\eqref{eq:rieszfn} and Lemma~\ref{lem:radialcomparison} give
$\iint\rho G_t\rho\le\iint\rho^*G_t\rho^*\le J_t(A,A)$.

The function $G_t*\chi_{B_R}$ is strictly radially decreasing. Indeed, take $z_i=r_i e_1$ with $0\le r_1<r_2$, let $H_-$ be the half-space containing the origin and bounded by the perpendicular bisector of $z_1,z_2$, and let $\sigma$ be reflection in its boundary. Pairing $w$ and $\sigma w$ gives
\[
(G_t*\chi_{B_R})(z_1)-(G_t*\chi_{B_R})(z_2)
=\int_{H_-}[G_t(z_1-w)-G_t(z_2-w)]\cdot[\chi_{B_R}(w)-\chi_{B_R}(\sigma w)]\dd w>0.
\]
The first bracket is strictly positive in the half-space, and the second is nonnegative and positive on a set of positive measure.

Lemma~\ref{lem:radialcomparison} now applies to $K=G_t$ and $\rho^*$, a radial nonincreasing density in $[0,1]$ of mass $A$; equality forces $\rho^*=\chi_{B_R}$.
Equimeasurability then gives $\int\rho(1-\rho)=0$, hence $\rho=\chi_F$ for a measurable set $F$ of volume $A$.

It remains to use equality in the first inequality. Apply Burchard--Hajaiej \cite[Theorem~2]{BH} to $\chi_F,\chi_F$, the integrand $(u,v)\mapsto uv$, and the Gaussian kernel. The kernel is positive and strictly decreasing, $(u,v)\mapsto uv$ is strictly supermodular, and both indicators are nonconstant with finite positive level sets. Equality forces $F$ to agree with a translate $B_R(z_0)$ up to null sets. Thus $\bar\rho_E=\chi_{B_R(z_0)}$, and \eqref{eq:alpharho} gives $|E\triangle(M\times B_R(z_0))|=0$.
\end{proof}
\begin{proof}[Proof of Theorem~\ref{thm:A}]
If $\mathcal P_s(E)=+\infty$ there is nothing to prove. Otherwise
Proposition~\ref{prop:reduction} and Proposition~\ref{prop:scalar} with
$\theta=1$ give
\begin{equation}\label{eq:absorption}
\begin{aligned}
\mathcal P_s(E)-\mathcal P_s(M\times B_R)
&\ge c_{m,k,s}|M|\left(D(\bar\rho_E)-\fint_M\mathcal L(a_E)\,d\mu\right)\\
&\ge c_{m,k,s}\int_M\bigl[\mathcal G(a_E)-\mathcal L(a_E)\bigr]\,d\mu\ge0.
\end{aligned}
\end{equation}
Equation~\eqref{eq:absorption} proves the asserted inequality. If equality holds, then
$\int_M[\mathcal G(a_E)-\mathcal L(a_E)]\,d\mu=0$. By
Proposition~\ref{prop:scalar}, including the case $a=0$, the function
$\mathcal G-\mathcal L$ is positive on $[0,\infty)\setminus\{A\}$; hence
$a_E=A$ almost everywhere, and Lemma~\ref{lem:rigidity} shows that $E$ is a
translated cylinder up to a null set. Conversely, $\mathcal P_s$ is invariant
under translations in $\mathbb R^k$, so every translated cylinder attains equality.
\end{proof}

\subsection{Stability}\label{sec:proofB}
\subsubsection*{Absorption and volume coercivity}\label{sec:scalarretention}
Assume $R\sqrt\lambda\ge T_{k,s}(2)$. If $\mathcal P_s(E)=+\infty$, the bounds are immediate. Otherwise, Proposition~\ref{prop:scalar} with $\theta=2$ and Proposition~\ref{prop:reduction} give
\[
\fint_M\mathcal L(a_E)\,d\mu
\le\frac12\fint_M\mathcal G(a_E)\,d\mu
\le\frac12 D(\bar\rho_E).
\]
Consequently,
\begin{equation}\label{eq:scalarretention}
\mathcal P_s(E)-\mathcal P_s(M\times B_R)
\ge\frac{c_{m,k,s}|M|}{2}D(\bar\rho_E)
\ge\frac{c_{m,k,s}}2\int_M\mathcal G(a_E)\,d\mu.
\end{equation}

\begin{lemma}\label{lem:Wlower}
Let
\[
c=\frac12\left[\ell-\left(1+\frac sk\right)\frac{b_{k,s}}{\omega_k}R^{-s}\right]
\]
and assume $c\ge0$. Then
\[
\mathcal G(a)\ge\frac{cA}{2k}\frac{(a/A-1)^2}{a/A+1}\qquad(a\ge0).
\]
\end{lemma}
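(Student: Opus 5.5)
The plan is to work directly from the representation \eqref{eq:WFint}, $\mathcal G(a)=\int_a^A[V(r(v))-V(R)]\,dv$. I will convert the hypothesis $c\ge0$ into a lower bound on the slope of $V$ at $R$. Then I will propagate that bound inward and outward using the shape information of Lemma~\ref{lem:V}(ii). Write $x=a/A$.

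\emph{Step 1: a lower bound for $V(0)-V(R)$.} By Lemma~\ref{lem:formulas}, $\mathcal G(0)\ge cA$. Since $V$ is nonincreasing, the integrand in $\mathcal G(0)=\int_0^A[V(r(v))-V(R)]\,dv$ is at most $V(0)-V(R)$. Hence $V(0)-V(R)\ge c$.

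\emph{Step 2: the range $a\le A$.} On $[0,R]$ the function $V-V(R)$ is concave by Lemma~\ref{lem:V}(ii), and it vanishes at $R$. It therefore lies above its chord from $0$ to $R$:
\[
V(r)-V(R)\ge\Bigl(1-\frac rR\Bigr)\bigl(V(0)-V(R)\bigr)\ge c\Bigl(1-\frac rR\Bigr).
\]
With $r(v)/R=(v/A)^{1/k}$, concavity of $y\mapsto y^{1/k}$ gives $1-y^{1/k}\ge(1-y)/k$. Hence
\[
\mathcal G(a)\ge\frac ck\int_a^A\Bigl(1-\frac vA\Bigr)dv=\frac{cA}{2k}(1-x)^2\ge\frac{cA}{2k}\frac{(x-1)^2}{x+1}.
\]
For $a=0$ one may use Step 1 directly instead.

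\emph{Step 3: the range $a>A$.} This is the main obstacle, since we have no direct handle on $V$ outside $B_R$. The idea is to pass through the derivative at $R$. Concavity on $[0,R]$ and the $C^1$ matching at $R$ give $-V'(R)\ge(V(0)-V(R))/R\ge c/R$. The monotonicity of $r^{k-1}(-V'(r))$ on $[R,\infty)$ then gives $-V'(\rho)\ge(c/R)(R/\rho)^{k-1}$ for $\rho\ge R$.

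In the volume variable, $r'(w)=r(w)/(kw)$. Writing $\rho=r(w)$, this yields, for $w\ge A$,
\[
-\frac{d}{dw}V(r(w))\ge\frac c{kA}\Bigl(\frac Aw\Bigr)^{2-2/k}\ge\frac{cA}{kw^2}.
\]
Integrating from $A$ to $v$ gives $V(R)-V(r(v))\ge\frac ck(1-A/v)$. Integrating again from $A$ to $a$ gives
\[
\mathcal G(a)\ge\frac{cA}{k}\bigl(x-1-\ln x\bigr).
\]

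It remains to check the elementary inequality
\[
x-1-\ln x\ge\frac{(x-1)^2}{2(x+1)}\qquad(x\ge1).
\]
Using $\ln x\le(x-1)/\sqrt x$ for $x\ge1$, we get $x-1-\ln x\ge(x-1)^2/(x+\sqrt x)$. The claim then follows from $x+\sqrt x\le2(x+1)$. This completes both ranges, and in fact the bound obtained is somewhat stronger than the one stated.
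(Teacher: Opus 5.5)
Your Steps 1 and 2 are the paper's interior argument. The integration and the elementary inequality at the end of Step 3 are also correct; the paper reaches the same intermediate bound $\frac{cA}{k}(x-1-\ln x)$. The gap is the first move of Step 3. Lemma~\ref{lem:V}(ii) says that $r\mapsto r^{k-1}(-V'(r))$ is \emph{nonincreasing} on $[R,\infty)$. That gives $-V'(\rho)\le (R/\rho)^{k-1}(-V'(R))$, an upper bound, so $-V'(R)\ge c/R$ cannot be pushed outward this way.

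The lower bound you assert is in fact false whenever $c>0$. For $k=1$ and $k=2$, integrating it gives $V(R)-V(\rho)\ge c(\rho/R-1)$ and $c\ln(\rho/R)$ respectively, both unbounded, whereas $0\le V\le\ell$. For $k\ge3$ it contradicts the decay $-V'(\rho)\le (k+s)A(\rho-R)^{-k-s-1}$. That decay follows from $-K_\lambda'(\tau)\le (k+s)\tau^{-k-s-1}$, which you get by differentiating \eqref{eq:subordination} and using $d\nu_{\rm gap}\le\kappa_{k,s}t^{-1-s/2}dt$.

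The obstruction is structural. On $[R,\infty)$ the function $r\mapsto V(R)-V(r)$ is concave, nondecreasing, and vanishes at $R$. Tangent or derivative information at $R$, or anything from inside the ball, therefore only bounds it from above. A lower bound needs a quantitative lower bound for $V(R)-V(r_2)$ at some finite $r_2>R$.

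That missing input is what the paper supplies, at $r_2=2R$. Combining the boundary identity \eqref{eq:VR} with the cone bound \eqref{eq:cone} gives \eqref{eq:conedifference}, which rewrites as
$V(R)-V(2R)\ge c+\bigl[\frac sk\frac{b_{k,s}}{\omega_k}-k2^{-k}\omega_k\ln3\bigr]R^{-s}$.
The bracket is nonnegative for two reasons. First, $\frac sk\frac{b_{k,s}}{\omega_k}$ equals $\omega_k$ at $s=0$ and is nondecreasing in $s$, since its logarithmic derivative is nonnegative by monotonicity of the digamma function. Second, $k2^{-k}\ln3<1$.

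Convexity of $V$ on $[R,2R]$ (chord from $R$ to $2R$) and monotonicity beyond $2R$ then give $V(R)-V(r)\ge c\min\{1,r/R-1\}$. Finally, $(v/A)^{1/k}-1\ge\frac{v/A-1}{k\,v/A}$, and the right side is at most $1$. This turns the previous bound into exactly your pointwise estimate $V(R)-V(r(v))\ge\frac ck(1-A/v)$. With that replacement, the rest of your Step 3 goes through unchanged.
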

\begin{proof}
Write $u=a/A$.
\emph{Case $a\le A$.} By Lemma~\ref{lem:V}(ii), $V$ is concave on $[0,R]$, so $V(r)-V(R)\ge(1-r/R)(V(0)-V(R))$, and 
$V(0)-V(R)\ge \mathcal G(0)/A\ge c$ by \eqref{eq:WFint} and Lemma~\ref{lem:formulas}. Since $1-(v/A)^{1/k}\ge(1-v/A)/k$,
\[
\mathcal G(a)=\int_a^A[V(r(v))-V(R)]\dd v
\ge\frac ck\int_a^A(1-v/A)\dd v
=\frac{cA}{2k}(1-u)^2\ge\frac{cA}{2k}\frac{(u-1)^2}{u+1}.
\]
\emph{Case $a\ge A$.} We first establish the coefficient bound for the exterior estimate.
Here $\psi$ denotes the digamma function.
From the formula for $b_{k,s}$,
\[
\frac{s}{k}\,\frac{b_{k,s}}{\omega_k}=2^{-s}\pi^{(k-1)/2}\Gamma\left(\frac{1-s}2\right)/\Gamma\left(\frac{k-s}2+1\right),
\]
which equals $\omega_k$ at $s=0$. Its logarithmic derivative in $s$ is
\[
-\ln2-\frac12\psi\left(\frac{1-s}2\right)+\frac12\psi\left(\frac{k-s}2+1\right)
\ge-\ln2-\frac12\psi(\frac12)+\frac12\psi(1)=0,
\]
since $\psi$ is increasing and $\psi(1)-\psi(\frac12)=2\ln2$. Finally $k2^{-k}\ln3\le\frac12\ln3<1$. Thus
\[
\frac sk\frac{b_{k,s}}{\omega_k}\ge\omega_k>k2^{-k}\omega_k\ln3.
\]

By Lemma~\ref{lem:formulas},
\[
V(R)-V(2R)\ge c+\big[\frac{s}{k}\,\frac{b_{k,s}}{\omega_k}-k2^{-k}\omega_k\ln3\big]R^{-s}\ge c.
\]

Convexity and monotonicity of $V$ on $[R,\infty)$ (Lemma~\ref{lem:V}(ii)) give 
\[
V(R)-V(r)\ge c\min\{1,r/R-1\}
,
\]
 and for $v\ge A$ the mean value theorem gives 
$(v/A)^{1/k}-1\ge(v/A-1)/(k\,v/A)
,$
 whose right-hand side is at most $1$. Hence
\[
\mathcal G(a)=\int_A^a[V(R)-V(r(v))]\dd v
\ge cA\int_1^u\frac{\tau-1}{k\tau}\dd\tau
\ge\frac{c A}{2k}\,\frac{(u-1)^2}{u}\ge\frac{c A}{2k}\frac{(u-1)^2}{u+1}.
\qedhere
\]
\end{proof}

The first estimate of Theorem~\ref{thm:B} follows from
\eqref{eq:scalarretention} and Lemma~\ref{lem:Wlower}. The second requires a
quantitative form of the Riesz inequality, to which we now turn.

\subsubsection*{The quantitative Riesz inequality and the asymmetry}\label{sec:highconversion}

For ball-kernel interactions let $B\subset\mathbb R^k$ be a centered ball and $\rho:\mathbb R^k\to[0,1]$ be integrable. Set
\[
I_B(\rho)=\iint\rho(z)\chi_B(z-w)\rho(w)\dd z\dd w.
\]
Frank and Lieb stated the following density form of the quantitative Riesz inequality in \cite[Theorem~3]{FLflock} and proved it in the supplementary note \cite{FLnote}, building on Christ's quantitative Riesz--Sobolev theorem \cite{Christ}.
\begin{citedtheorem}\label{thm:FL}
For every $k\ge1$ and $0<\delta\le\frac12$ there is $c_{k,\delta}>0$ such that for all balls $B_r\subset\R^k$ centered at the origin and all $\rho\in L^1(\R^k)$ with $0\le\rho\le1$, $\int\rho=|B_R|$ and $\delta\le r/(2R)\le1-\delta$,
\[
I_{B_r}(\rho)\le I_{B_r}(\chi_{B_R})-\frac{c_{k,\delta}}2\,\Big(\inf_{z_0\in\R^k}\|\rho-\chi_{B_R(z_0)}\|_1\Big)^2 .
\]
\end{citedtheorem}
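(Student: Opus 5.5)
This statement is quoted from \cite{FLflock,FLnote} and is used as a black box in what follows. If I had to reconstruct it, I would split the deficit $I_{B_r}(\chi_{B_R})-I_{B_r}(\rho)$ into two parts. One part measures how far $\rho$ is from being an indicator; the other measures how far an almost-indicator is from a translated ball. By scaling, I would normalize $R=1$, so that $r\in[2\delta,2-2\delta]$ ranges over a compact set. This is where the hypothesis on $\delta$ enters: the constants can be chosen uniformly, and the degenerate regimes $r\to0$ (kernel too short-range) and $r\to2R$ (the ball $B_r$ sees all of $B_R$ from the boundary) are excluded.

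\emph{Step 1 (density to rearrangement).} I would invoke \eqref{eq:rieszfn} with $K=\chi_{B_r}$, which gives $I_{B_r}(\rho)\le I_{B_r}(\rho^*)$. Then I would apply Lemma~\ref{lem:radialcomparison} with $K=\chi_{B_r}$ and $f=\rho^*$, which gives
\[
I_{B_r}(\chi_{B_1})-I_{B_r}(\rho^*)\ge\int(\chi_{B_1}-\rho^*)\,\bigl[\Phi-\Phi(e_1)\bigr],\qquad \Phi=\chi_{B_r}*\chi_{B_1}.
\]
For $r\in[2\delta,2-2\delta]$, the radial profile of $\Phi$ has derivative at $|z|=1$ bounded away from zero, and I would check that this bound is uniform in $r$. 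A bathtub argument then makes the right side at least $c\,\|\rho^*-\chi_{B_1}\|_1^2$: the worst case places the mass defect in a thin shell around $\partial B_1$, which yields a quadratic lower bound. Since $\int\rho(1-\rho)=\int\rho^*(1-\rho^*)\le\|\rho^*-\chi_{B_1}\|_1$, this step controls the distance of $\rho$ from the indicator of a set.

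\emph{Step 2 (near-indicator case).} Let $F=\{\rho>1/2\}$, adjusted to have measure $|B_1|$. By Step~1, $\|\rho-\chi_F\|_1\lesssim\|\rho^*-\chi_{B_1}\|_1$. The bilinear form is Lipschitz in $L^1$ with constant $2|B_r|$, so it suffices to bound $I_{B_r}(\chi_{B_1})-I_{B_r}(\chi_F)$ from below by $c\inf_{z_0}|F\triangle B_1(z_0)|^2$. I would obtain this from Christ's quantitative Riesz--Sobolev theorem \cite{Christ}, applied to the triple $(F,F,B_r)$. The condition $2\delta\le r\le2-2\delta$ says precisely that the triple of radii $(1,1,r)$ is strictly admissible, so that theorem applies with a constant depending only on $k$ and $\delta$.

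\emph{Step 3 (combining).} Finally I would take the better of the two lower bounds, using $\|\rho-\chi_{B_1(z_0)}\|_1\le\|\rho-\chi_F\|_1+|F\triangle B_1(z_0)|$. There is one subtlety: the Lipschitz error from Step~2 is linear in $\|\rho-\chi_F\|_1$, while the Step~1 gain is only quadratic. I would handle this by cases. If $\|\rho-\chi_F\|_1$ is at least a small multiple of the asymmetry, Step~1 alone suffices. Otherwise Step~2 dominates, after shrinking the constant. I expect the main obstacle to be Step~2, namely the uniform (in $r$) validity of Christ's theorem near the admissibility boundary, together with the bookkeeping needed to absorb the linear error. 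This is exactly the content of \cite{FLnote}, which is why the paper cites it rather than reproving it.
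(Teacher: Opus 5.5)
Citing \cite{FLflock,FLnote} and treating the statement as a black box is exactly what the paper does. It gives no proof, only the citation and a remark on how Frank and Lieb's normalization produces the coefficient $c_{k,\delta}/2$. Your reconstruction, however, has a genuine gap. The claim in Step~2 that $L^1$-Lipschitz continuity reduces matters to $\chi_F$ is false, and the case analysis in Step~3 cannot repair it. Put $\epsilon=\|\rho-\chi_F\|_1$, which is comparable to $\|\rho^*-\chi_{B_1}\|_1$. Put $\beta=\inf_{z_0}|F\triangle B_1(z_0)|$ and $\alpha=\inf_{z_0}\|\rho-\chi_{B_1(z_0)}\|_1\le\epsilon+\beta$.

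Your two lower bounds for the deficit are $c\,\epsilon^2$ and $c\,\beta^2-2|B_r|\epsilon$. If $\epsilon<\eta\alpha$, the second bound is only $c(1-\eta)^2\alpha^2-2|B_r|\eta\alpha$, which is negative once $\alpha\ll\eta$, whatever $\eta$ you fix. Absorbing the linear error requires $\epsilon\lesssim\alpha^2$. In the complementary regime $\epsilon\gtrsim\alpha^2$, Step~1 gives only $\alpha^4$. For instance, $\epsilon=\alpha^{3/2}$ and $\beta\approx\alpha$ leave you with a bound of order $\alpha^3$.

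This is not a matter of constants. The first variation $2\int(\chi_F-\rho)(\chi_{B_r}*\chi_F)$ really can be of order $-\epsilon$ when $F$ is not a ball. Take $\rho=\frac35$ on a small far-away component of $F=\{\rho>\frac12\}$, and $\rho=\frac25$ on a thin shell of equal volume just outside the main ball of $F$. In that configuration Christ's theorem credits $F$ only with $\beta^2$, although its true deficit is linear in $\beta$. So comparing $I_{B_r}(\rho)$ with $I_{B_r}(\chi_F)$ through Lipschitz continuity loses a full order.

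To repair this, avoid any first-order comparison and keep every remainder nonnegative. By the layer-cake formula,
$I_{B_r}(\rho)=\int_0^1\int_0^1\langle\chi_{\{\rho>\tau\}},\chi_{B_r}*\chi_{\{\rho>\sigma\}}\rangle\,d\tau\,d\sigma$.
The same formula holds for $\rho^*$, whose level sets are the centered balls of the same measures. Hence $I_{B_r}(\chi_{B_1})-I_{B_r}(\rho)$ splits into two parts:
\begin{itemize}
\item your Step~1 deficit $I_{B_r}(\chi_{B_1})-I_{B_r}(\rho^*)$, which is $\gtrsim\epsilon^2$;
\item an integral over $(\tau,\sigma)$ of the nonnegative Riesz--Sobolev deficits of the pairs of level sets.
\end{itemize}
For small $\epsilon$, every level set with $\tau\in[\frac14,\frac34]$ has measure $|B_1|+O(\epsilon)$ and differs from $F$ by a set of measure $O(\epsilon)$. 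Apply Christ's theorem to the triples $(\{\rho>\tau\},\{\rho>\sigma\},B_r)$, using the roundness of $B_r$ to rule out non-round ellipsoids in the orbit. This bounds the integral below by $c(\beta-C\epsilon)_+^2$, and $c\epsilon^2+c(\beta-C\epsilon)_+^2\gtrsim\alpha^2$. Large $\epsilon$ is handled by Step~1 alone, since $\alpha\le2|B_1|$.

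Finally, the uniformity in $r$ that you single out as the main obstacle is automatic. For $r\in[2\delta,2-2\delta]$, the radii $(1,1,r)$ and all nearby triples stay in a compact set of strictly admissible triples. That is exactly the setting in which Christ's theorem is stated.
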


Here $c_{k,\delta}$ is the constant in the cited density theorem, whose interaction is $\frac12 I_B$ and whose asymmetry is the $L^1$ distance from a translated ball divided by $2A$, hence the coefficient is $c_{k,\delta}/2$.
Frank and Lieb use a layer-cake decomposition to pass from quantitative Riesz inequalities to fractional-perimeter stability \cite{FLflock}. Here we apply that passage to the integrable kernel produced by spectral splitting, on a fixed radius-ratio interval.
\begin{lemma}\label{lem:layer}
Let $\rho:\mathbb R^k\to[0,1]$ with $\int\rho=A=|B_R|$. Then
\[
D(\rho)\ge\frac{c_{k,1/4}(1-e^{-\lambda R^2/4})\kappa_{k,s}}{4(4\pi)^{k/2}}R^{-k-s}\left(\inf_{z_0\in\mathbb R^k}\|\rho-\chi_{B_R(z_0)}\|_1\right)^2.
\]
\end{lemma}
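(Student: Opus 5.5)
The plan is to decompose the integrable kernel $K_\lambda$ of \eqref{eq:DK} into indicators of centered balls by the layer-cake formula. Then Theorem~\ref{thm:FL} is applied radius by radius on the window where the radius ratio is admissible for $\delta=\frac14$. Finally the weight of that window is bounded from below explicitly.

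\emph{Layer-cake decomposition.} First, $K_\lambda$ is radial and radially nonincreasing, because each $G_t(re_1)$ is decreasing in $r$ and $\nu_{\rm gap}$ is positive. It also tends to $0$ at infinity, since $K_\lambda(r)\le r^{-k-s}$ by \eqref{eq:subordination}. Hence
\[
K_\lambda(|z|)=\int_0^\infty\chi_{B_r}(z)\,d\mu(r),\qquad d\mu(r)=-dK_\lambda(r)\ge0 ,
\]
where $\chi_{B_r}(z)$ is read as the indicator of $|z|<r$. Since $0\le\rho\le1$ and $K_\lambda\in L^1$, Tonelli gives
\[
D(\rho)=\int_0^\infty\bigl[I_{B_r}(\chi_{B_R})-I_{B_r}(\rho)\bigr]\,d\mu(r).
\]
Every bracket is nonnegative. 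This follows from \eqref{eq:rieszfn} with $K=\chi_{B_r}$, which gives $I_{B_r}(\rho)\le I_{B_r}(\rho^*)$, combined with Lemma~\ref{lem:radialcomparison} applied to $\rho^*$ with the bounded radial kernel $\chi_{B_r}$. We may therefore discard all radii outside $[R/2,3R/2]$. On that interval $r/(2R)\in[\frac14,\frac34]$, so Theorem~\ref{thm:FL} with $\delta=\frac14$ applies uniformly and yields
\[
D(\rho)\ge\frac{c_{k,1/4}}2\,\mu\bigl([R/2,3R/2]\bigr)\Bigl(\inf_{z_0}\|\rho-\chi_{B_R(z_0)}\|_1\Bigr)^2 .
\]
At the endpoints, the question of open versus closed intervals only affects atoms of $\mu$, and $K_\lambda$ is continuous, so it is harmless.

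\emph{Lower bound for the window weight.} It remains to show
\[
\mu([R/2,3R/2])=K_\lambda(R/2)-K_\lambda(3R/2)\ge\frac{(1-e^{-\lambda R^2/4})\kappa_{k,s}}{2(4\pi)^{k/2}}R^{-k-s}.
\]
Writing out the definition,
\[
K_\lambda(R/2)-K_\lambda(3R/2)=\kappa_{k,s}\int_0^\infty(4\pi t)^{-k/2}\bigl[e^{-R^2/(16t)}-e^{-9R^2/(16t)}\bigr](1-e^{-\lambda t})\,t^{-1-s/2}\,dt,
\]
and the integrand is nonnegative. I would keep only $t\ge R^2/4$. There $1-e^{-\lambda t}\ge1-e^{-\lambda R^2/4}$. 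After the substitution $t=R^2\tau$ the remaining integral is $R^{-k-s}(4\pi)^{-k/2}$ times the numerical quantity
\[
\int_{1/4}^\infty\tau^{-(k+s)/2-1}\bigl(e^{-1/(16\tau)}-e^{-9/(16\tau)}\bigr)\,d\tau ,
\]
which must be shown to be at least $\frac12$ for all $k\ge1$ and $0<s<1$.

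\emph{Main obstacle.} The step I expect to be hardest is this uniform numerical bound. The factor $\tau^{-(k+s)/2-1}$ decays faster as $k$ grows, so the bulk of the integral should come from $\tau$ near $\frac14$. On $[\frac14,\frac12]$, say, the bracket is already about $0.6$ or more, and $\tau^{-(k+s)/2-1}\ge 2^{(k+s)/2+1}$, which grows with $k$. This gives a contribution of at least $\frac14\cdot 0.6\cdot 2^{3/2}>\frac12$ already for $k=1$. If the constant turns out tight for small $k$, I would instead integrate the bracket exactly against the power weight via the Gamma function: $\int_0^\infty\tau^{-a-1}e^{-c/\tau}\,d\tau=\Gamma(a)c^{-a}$, and then subtract the discarded tail $\tau<\frac14$. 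Combining the window bound with the Frank--Lieb factor $\frac{c_{k,1/4}}2$ gives the constant $\frac{c_{k,1/4}(1-e^{-\lambda R^2/4})\kappa_{k,s}}{4(4\pi)^{k/2}}$ of the statement.
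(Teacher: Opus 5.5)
Your argument is the paper's proof: layer-cake decomposition of $K_\lambda$ into indicators $\chi_{B_r}$, nonnegativity of every bracket via \eqref{eq:rieszfn} and Lemma~\ref{lem:radialcomparison} applied to $\rho^*$, Theorem~\ref{thm:FL} with $\delta=\frac14$ on $r\in[R/2,3R/2]$, and an explicit lower bound for $K_\lambda(R/2)-K_\lambda(3R/2)$ from a restricted time window. The one step that fails as written is your numerical check at the end.

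On $[\frac14,\frac12]$ the bracket $e^{-1/(16\tau)}-e^{-9/(16\tau)}$ is increasing in $u=1/(16\tau)\in[\frac18,\frac14]$. Its minimum is therefore $e^{-1/8}-e^{-9/8}\approx0.558$, not $0.6$. Even with $0.6$ one gets $\frac14\cdot0.6\cdot2^{3/2}\approx0.42<\frac12$. So the quick estimate fails for $k=1$ and $s$ not close to $1$. It does work for $k\ge2$, where the weight is at least $4$ and the bound is $0.558$.

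The inequality you need is still true with room to spare; only the pointwise bound on the weight is too crude. With $a=(k+s)/2>\frac12$, integrate the power exactly:
\[
\int_{1/4}^{1/2}\tau^{-a-1}\,d\tau=\frac{2^a(2^a-1)}{a}\ge 4-2\sqrt2\approx1.17 .
\]
The lower bound holds because $2^a$ and $(2^a-1)/a$ are positive and increasing in $a$. Then $1.17\cdot0.558\approx0.65>\frac12$, uniformly in $k\ge1$ and $0<s<1$.

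Alternatively, follow the paper. Restrict to $t\in[R^2/4,R^2]$ and use $(4\pi t)^{-k/2}\ge(4\pi R^2)^{-k/2}$ to remove the dependence on $k$. Use that $u\mapsto e^{-u}-e^{-9u}$ is increasing on $[\frac1{16},\frac14]$, and that $\int_{R^2/4}^{R^2}t^{-1-s/2}\,dt=\frac2s(2^s-1)R^{-s}\ge2\ln2\,R^{-s}$. Together these give the factor $2e^{-1/16}(1-e^{-1/2})\ln2\approx0.512>\frac12$.

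With either repair your proof is complete.
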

\begin{proof}
Let $\rho^*$ be the symmetric decreasing rearrangement of $\rho$.
By \eqref{eq:rieszfn} and the first assertion of
Lemma~\ref{lem:radialcomparison}, applied with $K=\chi_{B_r}$
and $f=\rho^*$,
$I_{B_r}(\rho)\le I_{B_r}(\rho^*)\le I_{B_r}(\chi_{B_R})$.
The function $K_\lambda$ is positive, decreasing and $C^1$ on $(0,\infty)$ with $K_\lambda(\infty)=0$, so 
\[
\begin{aligned}
K_\lambda(|z|)&=\int_0^\infty\chi_{B_r}(z)(-K_\lambda^\prime(r))\dd r,\\
D(\rho)&=\int_0^\infty[I_{B_r}(\chi_{B_R})-I_{B_r}(\rho)](-K_\lambda^\prime(r))\dd r.
\end{aligned}
\]
All brackets are nonnegative; on $r\in[R/2,3R/2]$ we apply Theorem~\ref{thm:FL} with $\delta=\frac14$. This gives
\begin{equation}\label{eq:layerretention}
D(\rho)\ge\frac{c_{k,1/4}}2\left[K_\lambda(R/2)-K_\lambda(3R/2)\right]\left(\inf_{z_0\in\mathbb R^k}\|\rho-\chi_{B_R(z_0)}\|_1\right)^2.
\end{equation}
 To estimate the kernel difference, restrict the time integral defining $K_\lambda$ to $t\in[R^2/4,R^2]$. There 
\[
1-e^{-\lambda t}\ge1-e^{-\lambda R^2/4},\qquad (4\pi t)^{-k/2}\ge(4\pi R^2)^{-k/2},
\]
and, since $u\mapsto e^{-u}-e^{-9u}$ is increasing on
$[\frac1{16},\frac14]$,
\[
e^{-R^2/(16t)}-e^{-9R^2/(16t)}
\ge e^{-1/16}-e^{-9/16}
=e^{-1/16}\bigl(1-e^{-1/2}\bigr).
\]
Moreover,
\[
\int_{R^2/4}^{R^2}t^{-1-s/2}\dd t
=\frac2s(2^s-1)R^{-s}\ge2\ln2\,R^{-s}.
\]
Finally, \(2e^{-1/16}(1-e^{-1/2})\ln2>1/2\).
The preceding estimates prove
\[
K_\lambda(R/2)-K_\lambda(3R/2)\ge\frac{(1-e^{-\lambda R^2/4})\kappa_{k,s}}{2(4\pi)^{k/2}}R^{-k-s},
\]
which combines with \eqref{eq:layerretention} to give the stated conclusion.
\end{proof}

\subsubsection*{Assembly and normalization}
The coefficients in Theorem~\ref{thm:B} are given by
\begin{equation}\label{eq:Cchoice}
\begin{aligned}
C_1(k,s,x)&=\frac1{8k}\left[
 \frac{2\omega_k\kappa_{k,s}\Gamma(1-s/2)}{s b_{k,s}}x^s
 -\left(1+\frac sk\right)\right],\qquad x\ge T_{k,s}(2),\\
C_2(k,s)&=\frac{c_{k,1/4}(1-e^{-T_{k,s}(2)^2/4})
 \kappa_{k,s}\omega_k^2}{8(4\pi)^{k/2}b_{k,s}}.
\end{aligned}
\end{equation}
Here $c_{k,1/4}$ is the constant in Theorem~\ref{thm:FL}.

\begin{proof}[Proof of Theorem~\ref{thm:B}]
If $\mathcal P_s(E)=+\infty$, both conclusions are immediate; assume $\mathcal P_s(E)<\infty$, so that \eqref{eq:scalarretention} holds.
Combining \eqref{eq:scalarretention} with Lemma~\ref{lem:Wlower}, whose hypothesis $c\ge0$ holds by $(\mathrm I_2)$ and \eqref{eq:gapmass}, gives
\[
\begin{aligned}
\mathcal P_s(E)-\mathcal P_s(M\times B_R)
&\ge\frac{c_{m,k,s}}2\int_M\mathcal G(a_E)\dd\mu\\
&\ge\frac{c_{m,k,s}A}{8k}
 \left[\ell-\left(1+\frac sk\right)\frac{b_{k,s}}{\omega_k}R^{-s}\right]
 \int_M\frac{(a_E/A-1)^2}{a_E/A+1}\dd\mu.
\end{aligned}
\]

Dividing the displayed volume bound by \eqref{eq:cylinder}, using $A=\omega_kR^k$ and \eqref{eq:gapmass}, gives the first coefficient in \eqref{eq:Cchoice}, since
\[
\begin{aligned}
\frac1{|M|}\int_M\frac{(a_E/A-1)^2}{a_E/A+1}\dd\mu
&=\frac1{|E|}\int_M\frac{(a_E-A)^2}{a_E+A}\dd\mu,\\
\frac{\omega_kR^s}{8kb_{k,s}}
 \left[\ell-\left(1+\frac sk\right)\frac{b_{k,s}}{\omega_k}R^{-s}\right]
&=C_1(k,s,R\sqrt\lambda).
\end{aligned}
\]

By \eqref{eq:alpharho}, $\inf_{z_0}\|\bar\rho_E-\chi_{B_R(z_0)}\|_1=A\alpha(E)$. Equation~\eqref{eq:scalarretention} and Lemma~\ref{lem:layer} yield
\[
\mathcal P_s(E)-\mathcal P_s(M\times B_R)
\ge\frac{c_{m,k,s}|M|}{2}\cdot\frac{c_{k,1/4}}2
 \cdot\frac{(1-e^{-\lambda R^2/4})\kappa_{k,s}}{2(4\pi)^{k/2}}
\cdot R^{-k-s}A^2\alpha(E)^2.
\]

Now $A^2R^{-k-s}=\omega_k^2R^{k-s}$. Divide by \eqref{eq:cylinder} and use $R\sqrt\lambda\ge T_{k,s}(2)$ and the increasing function $u\mapsto1-e^{-u^2/4}$. The resulting coefficient is at least $C_2(k,s)$ as defined in \eqref{eq:Cchoice}. This proves the second conclusion.
\end{proof}

\subsection{Flat tori}\label{sec:periodic}
The quotient volume equals the volume in one fundamental cell.
The following identity transfers the perimeter comparisons to periodic sets.
\begin{lemma}\label{lem:flat}
Let $\Lambda\subset\R^m$ be a lattice of full rank, $M=\R^m/\Lambda$ with the flat metric, and $Q$ a fundamental cell of $\Lambda$. For a measurable set $E\subset M\times\R^k$ of finite measure, with $\Lambda$-periodic lift $\widetilde E\subset\R^n$, $\mathcal P_s(E)=\mathcal P_s^\Lambda(\widetilde E)$, with the one-cell perimeter defined in \eqref{eq:periodic}.

\end{lemma}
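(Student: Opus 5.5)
The plan is to compute $\mathcal P_s(E)$ directly from the heat-kernel definition \eqref{eq:def} using the heat kernel of the flat torus. Then we undo the subordination of Lemma~\ref{lem:normalization}(a) in dimension $n=m+k$.

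\emph{Step 1: lift the heat kernel.} On $M=\R^m/\Lambda$ with the flat metric, the heat kernel is the periodization
\[
h_t(x,y)=\sum_{\gamma\in\Lambda}G^{(m)}_t(\tilde x-\tilde y+\gamma),
\]
where $\tilde x,\tilde y\in\R^m$ are lifts of $x,y$. This is the standard identification: the sum converges locally uniformly and is smooth, it solves the heat equation, it is symmetric and invariant, and it tends to the delta. Uniqueness of the heat kernel on a closed manifold then identifies it; one can also check it against the eigenfunction expansion \eqref{eq:base-spectral-expansion} via Poisson summation. Consequently, for $X=(x,z)$ and $Y=(y,w)$,
\[
h_t^N(X,Y)=\sum_{\gamma\in\Lambda}G^{(n)}_t(\tilde X-\tilde Y+(\gamma,0)).
\]

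\emph{Step 2: unfold the sum.} Identify $E$ with $\widetilde E\cap(Q\times\R^k)$ and $N\setminus E$ with $(\R^n\setminus\widetilde E)\cap(Q\times\R^k)$, up to null sets. By Tonelli, the sum over $\gamma$ of the integrals over $Y\in(Q\times\R^k)\setminus\widetilde E$ of $G^{(n)}_t(X-Y+(\gamma,0))$ becomes a single integral over $Y\in\R^n\setminus\widetilde E$. This step uses that the translates $Q-\gamma$ tile $\R^m$ up to null sets and that $\widetilde E$ is $\Lambda\times\{0\}$-invariant. Hence
\[
H_t(E)=\int_{\widetilde E\cap(Q\times\R^k)}\int_{\R^n\setminus\widetilde E}G^{(n)}_t(X-Y)\,dY\,dX .
\]

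\emph{Step 3: integrate in $t$.} Multiply by $\kappa_{n,s}t^{-1-s/2}$ and apply Tonelli once more; all integrands are nonnegative, so infinite values are allowed. Then apply \eqref{eq:subordination} with $k$ replaced by $n$ and $r=|X-Y|$. This yields exactly the kernel $|X-Y|^{-n-s}$, so $\mathcal P_s(E)=\mathcal P_s^\Lambda(\widetilde E)$.

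The only delicate point is Step 1, the identification of the periodized Gaussian with the heat kernel of $M$. I would settle it via Poisson summation: it gives $\sum_\gamma G^{(m)}_t(\xi+\gamma)=|Q|^{-1}\sum_{\gamma^*\in\Lambda^*}e^{-4\pi^2|\gamma^*|^2t}e^{2\pi i\gamma^*\cdot\xi}$, which matches \eqref{eq:base-spectral-expansion} with the Fourier basis of $M$. The remaining steps are measure-theoretic bookkeeping.
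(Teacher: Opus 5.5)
Your proposal is correct and follows the paper's proof. Both use the method-of-images formula for the flat-torus heat kernel, the product structure to obtain $h^N_t(X,Y)=\sum_{\xi\in\Lambda}G_t^{(n)}(\widetilde X-\widetilde Y+(\xi,0))$, subordination \eqref{eq:subordination} in dimension $n$, and unfolding of the lattice sum over translates of the fundamental cell. The only differences are harmless: you unfold at the level of $H_t(E)$ before integrating in $t$ (legitimate by Tonelli, since everything is nonnegative), and you justify the image formula via Poisson summation against \eqref{eq:base-spectral-expansion} rather than only sketching it.
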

\begin{proof}
For $x,y\in M$ with lifts $\tilde x,\tilde y\in\R^m$, the heat kernel of $M$ is given by the method of images,
\[
 h_t(x,y)=\sum_{\xi\in\Lambda}(4\pi t)^{-m/2}e^{-|\tilde x-\tilde y+\xi|^2/(4t)};
\]
the sum is $\Lambda$-periodic in each variable and solves the heat equation on $\R^m$ with initial datum the sum of Dirac masses at the points $\tilde y-\xi$. Writing $\widetilde X=(\tilde x,z)$ and $\widetilde Y=(\tilde y,w)$, we get
\[
 h^N_t(X,Y)=\sum_{\xi\in\Lambda}G_t^{(n)}(\widetilde X-\widetilde Y+(\xi,0)).
\]
Equation~\eqref{eq:subordination} in dimension $n$ gives
\[
 \kappa_{n,s}\int_0^\infty h_t^N(X,Y)t^{-1-s/2}\dd t
 =\sum_{\xi\in\Lambda}|\widetilde X-\widetilde Y+(\xi,0)|^{-n-s}.
\] Unfolding the lattice sum in the $Y$-integral over $N\setminus E$ yields $\mathcal P_s(E)=\mathcal P_s^\Lambda(\widetilde E)$ with the integral \eqref{eq:periodic}.\qedhere

\end{proof}
\begin{proof}[Proof of Corollary~\ref{cor:torus}]
Let $M=\mathbb R^m/\Lambda$. Lemma~\ref{lem:flat} identifies the two perimeters. Let the dual lattice be
\[
\Lambda^*=\{\xi\in\mathbb R^m:\xi\cdot v\in\mathbb Z\text{ for every }v\in\Lambda\}.
\]

The eigenfunctions are $e^{2\pi i\xi\cdot x}$, $\xi\in\Lambda^*$, with eigenvalues $4\pi^2|\xi|^2$, so
\[
\lambda_1(M)=4\pi^2\min_{\xi\in\Lambda^*\setminus\{0\}}|\xi|^2.
\]

Let $\xi$ attain the minimum and choose $x_0$ with $\xi\cdot x_0=\frac12$. Every $v\in\Lambda$ satisfies
\[
|v-x_0|\ge\frac{|\xi\cdot(v-x_0)|}{|\xi|}\ge\frac1{2|\xi|},
\]
so that $r_{\rm cov}(\Lambda)\ge\frac1{2|\xi|}$.

Thus
\[
\sqrt{\lambda_1(M)}\ge\frac{\pi}{r_{\rm cov}(\Lambda)}.
\]

The corollary's radius condition gives the hypothesis of Theorem~\ref{thm:A}; its comparison and equality transfer to the lift through the quotient identity. For $\Lambda=L\mathbb Z$, $r_{\rm cov}(\Lambda)=L/2$ and the spectral estimate is an equality.
The same identification transfers Theorem~\ref{thm:B} to $\Lambda$-periodic sets,
with $\lambda_1(M)=4\pi^2\min_{\xi\in\Lambda^*\setminus\{0\}}|\xi|^2$.
\end{proof}

\appendix
\section{The fractional perimeter of the unit ball}\label{app:ball}
We prove formula \eqref{eq:constants1} for $b_{k,s}=P_s^{(k)}(B_1^k)$ by a direct computation. Set $Q_k(r)=|B_1^k\setminus(B_1^k+re_1)|$. Using $\zeta=w-z$ and polar coordinates gives
\[
 P_s^{(k)}(B_1^k)=\int_{\R^k}\frac{|B_1^k\setminus(B_1^k-\zeta)|}{|\zeta|^{k+s}}\dd\zeta
 =k\omega_k\int_0^\infty\frac{Q_k(r)}{r^{1+s}}\dd r.
\]
Slice $B_1^k$ by lines parallel to $e_1$. The line through $z'\in B_1^{k-1}$ meets $B_1^k$ in a segment of length $2\sqrt{1-|z'|^2}$, of which a piece of length $\min\{r,2\sqrt{1-|z'|^2}\}$ lies outside $B_1^k+re_1$. Hence
\[
 Q_k(r)=\int_{B_1^{k-1}}\min\big\{r,2\sqrt{1-|z'|^2}\big\}\dd z',
\]
where for $k=1$ the right-hand side is read as $\min\{r,2\}$ and $\omega_0=1$. The function $Q_k$ is Lipschitz, satisfies $Q_k(0)=0$, is constant for $r\ge2$, and has derivative $Q_k'(r)=\omega_{k-1}(1-r^2/4)^{(k-1)/2}$ for $0<r<2$. Integration by parts followed by $r=2\sqrt\sigma$ gives
\[
 \int_0^\infty\frac{Q_k(r)}{r^{1+s}}\dd r=\frac{\omega_{k-1}}s\int_0^2r^{-s}\Big(1-\frac{r^2}4\Big)^{(k-1)/2}\dd r
 =\frac{\omega_{k-1}}{2^s\,s}\,\frac{\Gamma\big(\frac{1-s}2\big)\Gamma\big(\frac{k+1}2\big)}{\Gamma\big(\frac{k-s}2+1\big)} .
\]
Since $\omega_{k-1}\Gamma(\frac{k+1}2)=\pi^{(k-1)/2},\, \Gamma(\frac{k-s}2+1)=\frac{k-s}2\Gamma(\frac{k-s}2),$ this gives the formula for $b_{k,s}$.


\begin{thebibliography}{99}
\bibitem{AB}
F. Alcover and R. Bruera,
\emph{Periodic Delaunay cylinders with constant anisotropic nonlocal mean curvature},
preprint, arXiv:2602.18219 (2026).

\bibitem{Bruera}
R. Bruera,
\emph{On the shape of minimizers for the periodic nonlocal perimeter in $\R^2$},
preprint, arXiv:2602.18215 (2026).

\bibitem{BH}
A. Burchard and H. Hajaiej,
\emph{Rearrangement inequalities for functionals with monotone integrands},
J. Funct. Anal. \textbf{233} (2006), 561--582.

\bibitem{CCM}
X. Cabr\'e, G. Csat\'o, and A. Mas,
\emph{Existence and symmetry of periodic nonlocal-CMC surfaces via variational methods},
J. Reine Angew. Math. \textbf{804} (2023), 11--40.

\bibitem{CRS}
L. Caffarelli, J.-M. Roquejoffre, and O. Savin,
\emph{Nonlocal minimal surfaces},
Comm. Pure Appl. Math. \textbf{63} (2010), 1111--1144.

\bibitem{CFS2}
M. Caselli, E. Florit-Simon, and J. Serra,
\emph{Fractional Sobolev spaces on Riemannian manifolds},
Math. Ann. \textbf{390} (2024), 6249--6314.

\bibitem{Chavel}
I. Chavel,
\emph{Eigenvalues in Riemannian geometry},
Pure and Applied Mathematics, vol. 115, Academic Press, Orlando, FL, 1984.

\bibitem{Christ}
M. Christ,
\emph{A sharpened Riesz--Sobolev inequality},
preprint, arXiv:1706.02007 (2017).

\bibitem{CM}
G. Csat\'o and A. Mas,
\emph{Strict rearrangement inequalities: nonexpansivity and periodic Gagliardo seminorms},
Trans. Amer. Math. Soc. \textbf{378} (2025), 7163--7197.

\bibitem{DDDV}
J. D\'avila, M. del Pino, S. Dipierro, and E. Valdinoci,
\emph{Nonlocal Delaunay surfaces},
Nonlinear Anal. \textbf{137} (2016), 357--380.

\bibitem{FFMMM}
A. Figalli, N. Fusco, F. Maggi, V. Millot, and M. Morini,
\emph{Isoperimetry and stability properties of balls with respect to nonlocal energies},
Comm. Math. Phys. \textbf{336} (2015), 441--507.

\bibitem{FLflock}
R.~L.~Frank and E.~H.~Lieb,
\emph{Proof of spherical flocking based on quantitative rearrangement inequalities},
Ann. Sc. Norm. Super. Pisa Cl. Sci. (5) \textbf{22} (2021), 1241--1263.

\bibitem{FLnote}
R. L. Frank and E. H. Lieb,
\emph{A note on a theorem of M. Christ},
preprint, arXiv:1909.04598 (2019).

\bibitem{FS}
R. L. Frank and R. Seiringer,
\emph{Non-linear ground state representations and sharp Hardy inequalities},
J. Funct. Anal. \textbf{255} (2008), 3407--3430.

\bibitem{FMM}
N. Fusco, V. Millot, and M. Morini,
\emph{A quantitative isoperimetric inequality for fractional perimeters},
J. Funct. Anal. \textbf{261} (2011), 697--715.

\bibitem{Gonzalo}
J. Gonzalo,
\emph{Soap bubbles and isoperimetric regions in the product of a closed manifold with Euclidean space},
preprint, arXiv:1312.6311 (2013).

\bibitem{Grigoryan}
A. Grigor'yan,
\emph{Heat kernel and analysis on manifolds},
AMS/IP Studies in Advanced Mathematics, vol. 47, American Mathematical Society, Providence, RI; International Press, Boston, MA, 2009.

\bibitem{Riesz}
F. Riesz,
\emph{Sur une in\'egalit\'e int\'egrale},
J. London Math. Soc. \textbf{5} (1930), no. 3, 162--168.

\bibitem{RV}
M. Ritor\'e and E. Vernadakis,
\emph{Large isoperimetric regions in the product of a compact manifold with Euclidean space},
Adv. Math. \textbf{306} (2017), 958--972.

\bibitem{Seo}
J. Seo,
\emph{Sufficiently wide strips uniquely minimize the planar periodic fractional perimeter},
preprint, arXiv:2609.32588 (2026).

\end{thebibliography}
\end{document}